\documentclass[a4paper, 12pt]{amsart}
\usepackage{amsmath, amsthm, amscd, amssymb, amsfonts, amsxtra, amssymb, latexsym}
\usepackage{verbatim}
\usepackage{graphicx} 
\usepackage{float}
\usepackage{enumerate}
\usepackage{mathrsfs}
\usepackage[mathscr]{euscript}
\usepackage{etoolbox} 

\usepackage{tikz}
\usetikzlibrary{calc}
\usetikzlibrary{arrows.meta}

\usepackage{booktabs} 
\usepackage{makecell} 

\usepackage{hyperref}
\hypersetup{colorlinks = true,	allcolors  = blue}

\usepackage{xcolor}

\def\black{\color{black}}

\newcommand{\sk}{\smallskip}
\newcommand{\msk}{\medskip}

\newcommand{\N}{\mathbb{N}}
\newcommand{\Z}{\mathbb{Z}}
\newcommand{\Q}{\mathbb{Q}}

\newcommand{\C}{\mathbb{C}}
\newcommand{\ff}{\mathbb{F}}

\newcommand{\G}{\Gamma}

\numberwithin{equation}{section}

\newtheorem{thm}{Theorem}[section]
\newtheorem{prop}[thm]{Proposition}
\newtheorem{lem}[thm]{Lemma}
\newtheorem{coro}[thm]{Corollary}

\theoremstyle{definition}
\newtheorem{rem}[thm]{Remark}
\newtheorem{exam}[thm]{Example}
\newtheorem{defi}[thm]{Definition}

\newtheorem*{openquest}{Open question}

\hoffset -1.35cm \voffset -.75cm 
\textwidth 15.25cm \textheight 23.25cm
\setlength{\parskip}{0.15cm}
\setlength{\parindent}{0.35cm}
\setlength{\itemindent}{.125cm}
\setlength{\itemsep}{.35cm}

\begin{document} \sloppy
\title[{Isospectral Cayley graphs with even and odd spectrum}]
{Isospectral Cayley graphs with \\ even and odd spectrum}
	
\author{Paula M.\@ Chiapparoli, Ricardo A.\@ Podest\'a}
	
\dedicatory{\today}
	
\keywords{Cayley (sum) graphs, di-Cayley graphs, integral spectrum, isospectral.}
\thanks{2020 {\it Mathematics Subject Classification.} Primary 05C25; Secondary 05C50, 05C75, 05C76.} 
	\thanks{Partially supported by CONICET, FonCyT and SECyT-UNC}
	\address{Ricardo A.\@ Podest\'a. FaMAF--CIEM (CONICET), Universidad Nacional de C\'ordoba. 
	Av.\@ Medina Allende 2144, (5000), C\'ordoba, Argentina. \newline
		{\it E-mail: podesta@famaf.unc.edu.ar}}
	\address{Paula M.\@ Chiapparoli. FaMAF--CIEM (CONICET), Universidad Nacional de C\'ordoba. 
	Av.\@ Medina Allende 2144, (5000), C\'ordoba, Argentina.
		\newline {\it E-mail: paula.chiapparoli@mi.unc.edu.ar}}

\begin{abstract}
For a group $G$ and subsets $S,T \subset G$ we introduce the \textit{mirror di-Cayley} graph $MX(G;S,T)$ and \textit{mirror di-Cayley sum} graph $MX^+(G;S,T)$ with connection sets $S$ and $T$. We refer to them as MDCGs for short and denote them indistinctly by $MX^*(G;S,T)$. 
We then consider the family $\mathcal{F}$ of those MDCGs with $T \in \mathcal{S}$, where $\mathcal{S}= \big\{ \{e\}, S, S \cup \{e\} \big\}$. 

We compute the spectra of the graphs $MX^*(G;S,T)$, with $T \in \mathcal{S}$, in terms of those of the corresponding Cayley graphs $X^*(G,S)$. 
We show that if $X(G,S)$ has integral spectrum then $MX^*(G;S,T)$ is also integral for any $T \in \mathcal{S}$, but $MX^*(G;S,S)$ has even spectrum (all even eigenvalues) and $MX^*(G;S,S \cup \{e\})$ has odd spectrum (all odd eigenvalues), an interesting phenomenon which seems to be new. We then study isospectrality between different pairs of MDCGs in terms of the isospectrality of the underlying Cayley graphs.

Finally, using unitary Cayley graphs $X(R,R^*)$ over a finite commutative ring $R$, which is known to be integral, we construct pairs of integral isospectral mirror di-Cayley (sum) graphs $\{ MX(R;R^*, T), MX^+(R;R^*, T) \}$, both with even (resp.\@ odd) spectrum for $T=R^*$ (resp.\@ $T=R^* \cup \{0\}$). All these examples can be seen as Cayley (sum) graphs over $G=R \times \Z_2$, hence obtaining pairs of even and odd isospectral Cayley graphs of the form $\{\G, \G^+\}$. 
\end{abstract}

\maketitle

\section{Introduction} \label{sec: intro} 
Among several relevant problems in spectral theory, there are two which are of special interest to us:
the integrality of the spectrum and the isospectral problem.
A lot of research has been done on each of them, both in general and for Cayley graphs. For these problems, we will restrict ourselves to Cayley graphs. 

The first problem originated with the question \textit{“Which graphs have integral spectra?”} by Harary and Schwenk in 1973 (\cite{HS}). The problem seems to be very hard in
great generality. 
More than three decades later, Abdollahi and Vatandoost asked \textit{“Which Cayley graphs are integral?”} in \cite{AV}. There are some good results when one restrict to this family of graphs (see for instance Section 3 in \cite{LZ}). 

The second problem is also classic and asks which non-isomorphic graphs are isospectral. A way to study this problem is by giving pairs of isospectral graphs with different properties. 
A good source for spectral properties of Cayley graphs is the survey of Liu and Zhou (\cite{LZ}), where you can find many results and references both of integral Cayley graphs and of isospectral Cayley graphs (see Sections 3 and 4, respectively).

In this work we consider both problems together.
\textit{Is it possible to obtain pairs of integral isospectral Cayley graphs?} If so, 
\textit{can we do it only with even or odd eigenvalues?}
In \cite{PV}, it is proved that, for $G$ abelian and $S$ a symmetric subset of $G$ not containing $0$, the Cayley graph $X(G,S)$ and the Cayley sum graph $X^+(G,S)$ are non-isospectral equienergetic graphs. There, many other interesting results are obtained by considering the special case of unitary Cayley graphs over rings, i.e.\@ take as $G$ a finite commutative ring with identity $R$ and as $S$ the group of units $R^*$. 

Motivated by the previous question, here we construct isospectral pairs of integral Cayley graphs with the extra property of having either all even or all odd eigenvalues, a phenomenon that seems to be new. 
We will do this by introducing three families of mirror di-Cayley (sum) graphs. 
The final result will be obtained using the unitary Cayley (sum) graphs $X(R,R^*)$ and $X^+(R, R^*)$, for $R$ a ring as above.

\subsubsection*{Mirror di-Cayley graphs}   
Let $G$ be a group and $S$ and $T$ be two subsets of $G$. 
We define the \textit{mirror di-Cayley graph} 
$$MX(G;S,T)$$ as the graph having vertex set $G \times \Z_2$ and where the directed edges are the edges of the Cayley graph $X(G,S)$ replicated in 
$$ X(G,S) \times \{0\} \qquad \text{and} \qquad X(G,S) \times \{1\}$$ (called the \textit{mirrors}) together with the crossing edges, that is the edges between the mirrors (see Definition \ref{def: dicayleys}). 
Similarly, we define the sum version of this graph, the  \textit{mirror di-Cayley sum graph} $MX^+(G;S,T)$, using the Cayley sum graph $X^+(G,S)$ as mirrors.
Here, and throughout the paper, we will denote by 
$X^*(G,S)$ the Cayley graph $X(G,S)=Cay(G,S)$ and the Cayley sum graph $X^+(G,S)=Cay^+(G,S)$ considered together. Similarly, we will use the notation 
     $MX^*(G;S,T)$ 
when considering the mirror di-Cayley graph $MX(G;S,T)$ and the mirror di-Cayley sum graph $MX^+(G;S,T)$ together. We refer to them as MDCGs for short.

Here, we will consider a family of three particular kind of MDCGs, namely those $MX^*(G;S,T)$ having $T=\{e\}, S$ or $S\cup \{e\}$, and focus on their spectral properties. 
That is, we will study the family of graphs 
    $$ \mathcal{F} = \big\{ MX^*(G;S,T) : \text{$G$ a group}, S\subset G \big\}_{T \in \mathcal{S}}, $$ 
where $\mathcal{S} = \{ \{e\}, S, S\cup \{e\} \}$.
Other choices of $T$ are also of interest. However, here we restrict ourselves to these subsets since, as we will see, the associated graphs have nice decompositions as products of graphs.

\subsubsection*{Graph spectral definitions}
Let $\G=(V,E)$ be a graph with $n$ vertices, where $V$ denotes the vertex set and $E$ the edge set. The eigenvalues of $\Gamma$ are the eigenvalues 
$\{\lambda_i\}_{i=1}^{n}$ of its adjacency matrix $A$. We denote this set by $Eig(\G)$. The \textit{spectrum} of $\Gamma$, denoted
	$$ Spec(\Gamma)=\{[\lambda_{i_1}]^{m_{i_1}},\cdots,[\lambda_{i_s}]^{m_{i_s}}\} $$
is the (multi)set of all the different eigenvalues $\{\lambda_{i_j}\}$ of $\Gamma$, counted with their multiplicities $\{m_{i_j}\}$, that is $m(\lambda_{i_j})=m_{i_j}$. 

The spectrum of $\Gamma$ is called \textit{symmetric} if $Spec(\G)=-Spec(\G)$, that is the multiplicities of $\lambda$ and $-\lambda$ coincide for any $\lambda$, in symbols 
    $$m(\lambda) = m(-\lambda)$$ 
for every $\lambda\in Spec(\Gamma)$. 
The spectrum of $\G$ is \textit{almost symmetric} if  $m(\lambda) = m(-\lambda)$ for every eigenvalue $\lambda$ with $\lambda \ne  \lambda_1$, where $\lambda_1$ is the principal eigenvalue (see Definition~2.13 in \cite{PV}).

The spectrum of $\G$ is said to be \textit{real} or \textit{integral} if 
$$ Spec(\Gamma)\subset\mathbb{R} \qquad \text{or} \qquad Spec(\Gamma)\subset\mathbb{Z},$$ 
respectively. 
We now introduce the following notion for integral graphs. We say that $\G$ is \textit{even} (resp.\@ \textit{odd}) or that it \textit{has even} (resp.\@ \textit{odd}) \textit{spectrum} if it is integral and all its eigenvalues are even (resp.\@ odd). That is 
$$ \G \text{ is even \, (resp.\@ odd) } \qquad \Leftrightarrow \qquad Spec(\G) \subset 2\Z \text{ \, (resp.\@ $2\Z+1$)}.$$
	
We recall that if $\Gamma$ is a $k$-regular graph, then $\lambda_1 = k$. Moreover, the multiplicity of $\lambda_1$ equals the number of connected components of $G$ and, therefore, $\Gamma$ is connected if and only if $m(\lambda_1) = 1$. Spectrally, a $k$-regular graph $G$ is bipartite if and only if the spectrum of $G$ is symmetric, which happens if and only if $-k$ is an eigenvalue of $\Gamma$.

Let $\Gamma_1$ and $\Gamma_2$ be two graphs with the same number of vertices. The graphs $\Gamma_1$ and $\Gamma_2$ are said to be \textit{isospectral} if 
    $$ Spec(\Gamma_1) = Spec(\Gamma_2).$$ 
	
In this paper we will study the spectrum of MDCGs in the family $\mathcal{F}$ and isospectrality between non-isomorphic graphs in the family.

\subsubsection*{Outline and results}
In Section \ref{sec: 3 families} we introduce the family of mirror di-Cayley (sum) graphs $MX^*(G;S,T)$ with $T=\{e\}, S$ or $S\cup \{e\}$ and give their basic properties (see Proposition \ref{prop: directedness}). 
In Proposition \ref{prop: cayley structure} we show that the graph 
$MX^*(G;S,T)$ is a Cayley graph over $G\times \Z_2$, namely $$MX^*(G;S,T) = X^*(G \times \Z_2, (S \times \{0\}) \cup (T\times \{1\})).$$
In Theorem \ref{teo: prods} we show that the graphs 
    $$ \G_{G;S,T}^* := MX^*(G;S,T) $$ 
are products of the underlying Cayley graph $\G_{G,S}^*=X^*(G,S)$ and the 2-path $P_2$ or the 2-path with loops $\mathring{P}_2$; namely, 
    $$	\G_{G;S,\{e\}}^*  = \G_{G,S}^* \Box P_2, \qquad 
	\G_{G;S,S}^*= \G_{G,S}^* \times \mathring{P_2}, \qquad \G_{G;S,S\cup\{e\}}^*  = \G_{G,S}^* \boxtimes P_2, $$
where $\Box$, $\times$ and $\boxtimes$ denote the Cartesian, direct and  strong products, respectively. 
	
In Section \ref{sec: spec}, we study the spectrum of the mirror di-Cayley (sum) graphs $\G_{G;S,\{e\}}^*$, $\G_{G;S,S}^*$ and $\G_{G;S,S\cup \{e\}}^*$
for arbitrary pairs $G$ and $S$, that is, we study the set 
    $$ \big\{ Spec( \G_{G;S,T}^*) : \text{ $G$ a group, $S\subset G$} \big\}_{T \in \mathcal{S}}.$$
Using the product decomposition of the MDCGs, in Proposition \ref{prop: spec bicayleys} we get the spectrum of each mirror di-Cayley (sum) graph $MX^*(G;S,T)$ in terms of the spectrum of the Cayley (sum) graph $X^*(G,S)$. Moreover, using the known description of the spectrum of Cayley graphs $X(G,S)$ and Cayley sum graphs $X^+(G,S)$ in terms of the irreducible characters $\chi$ of $G$, in Theorems \ref{thm: spec 3fam chars} and \ref{thm: spec 3fam chars sum} we give the spectrum of the MDCGs in terms of the numbers $\chi(S) =\sum_{s \in S} \chi(s)$, for $\chi$ running in the set of irreducible characters of $G$.

In Sections \ref{sec: integral} and \ref{sec: integral GR} we study integral graphs having even and odd spectrum. 
In Section \ref{sec: integral} we introduce the notion of even and odd spectrum (see Definition \ref{defi: even/odd}). In Corollary \ref{coro: integral} we show that $MX^*(G;S,T) \in  \mathcal{F}$ is integral if and only if $X^*(G,S)$ is integral and, in this case, that 
    $$ MX^*(G;S,S) \text{ is even  \qquad and \qquad} MX^*(G;S,S \cup \{e\}) \text{ is odd}$$ 
for every group $G$ and $S \subset G$.
In Proposition~\ref{prop: integral MDCGs} we give necessary and sufficient conditions for $MX^*(G;S,T)  \in  \mathcal{F}$ to be integral in terms of the  set $S$, depending on whether $G$ is a cyclic, an abelian or a non-abelian group. Then, in 
Examples~\ref{exam: circulant}--\ref{exam: Dn}, we give families of integral Cayley graphs $X(G,S)$. Using this, in Theorem~\ref{teo: cayley even/odd} we show that there are even and odd MDCGs defined over cyclic, abelian and non-abelian groups. These graphs are Cayley (sum) graphs over $G \times \Z_2$.

In Section \ref{sec: integral}, using the results of the previous section, we construct explicit examples of even and odd MDCGs both over finite fields and finite rings. 
First, by using known families of integral GP-graphs, that is graphs of the form $\G(k,q)=X(\ff_q, P_k)$ where $P_k=\{x^k:x\in \ff_q^*\}$, in Theorems \ref{thm: even/odd over Fqs} and \ref{thm: infinite families of even/odd graphs over Fqs} we exhibit infinite families of even and odd MDCGs graphs (in particular Cayley (sum) graphs). Namely, 
$$ MX^*(\ff_q;P_k,P_k) \text{ is even} \qquad \text{and} \qquad MX^*(\ff_q;P_k,P_k\cup \{0\}) \text{ is odd} $$ 
for some choices of $q$ and $k$.  
Then, using unitary Cayley (sum) graphs over ring, that is $X(R,R^*)$, where $R$ is a finite commutative ring with unit and $R^*$ the group of units, in Proposition \ref{prop: GRR integral} we show that 
$$ MX^*(R;R^*,R^*) \text{ is even} \qquad \text{and} \qquad MX^*(R;R^*,R^*\cup \{0\}) \text{ is odd}. $$

In Sections \ref{sec: crossed isosp} and \ref{sec: even/odd isosp} we study isospectrality problems for MDCGs in the family $\mathcal{F}$.
In Section~\ref{sec: isosp} we consider three different situations. First, in Proposition~\ref{prop: isospec T,T'} we show that there are no crossed isospectrality in $\mathcal{F}$. That is, for each fixed pair $(G,S)$, the pair of graphs $MX^*(G;S,T)$ and $MX^*(G;S,T')$ are not isospectral, for any pair of different $T,T' \in \mathcal{S}$ with $\mathcal{S} = \{\{e\}, S, S\cup \{e\}\}$.  Then, in Theorem \ref{thm: isosp X,X+}, we prove that the pair of mirror di-Cayley (sum) graphs 
    $$\{MX(G;S,T), MX^+(G;S,T)\}$$
    with $T \in \mathcal{S}$, 
are isospectral if and only if the pair of underlying Cayley (sum) graphs $\{X(G,S), X^+(G,S)\}$ are isospectral.
Finally, we consider the more general situation. For two pairs of groups and subsets $(G_1,S_1)$ and $(G_2,S_2)$, we prove that isospectrality between pairs 
\begin{gather*}
\{ MX^*(G_i;S_i,\{e\}) \}_{i=1,2}, \quad \{MX^*(G_i;S_i,S_i) \}_{i=1,2}, \\[1mm] 
\{MX^*(G_i;S_i,S_i \cup \{e\}) \}_{i=1,2}    
\end{gather*}
is possible if and only if the Cayley graphs $\{X^*(G_1,S_1), MX^*(G_2,S_2)\}$ are isospectral (see Theorem \ref{thm: gen isosp}).

The paper closes with Section \ref{sec: even/odd isosp}, where we construct explicit pairs of isospectral integral Cayley (sum) graphs $\{\G_E,{\G_E}^+\}$ both having even spectrum and $\{\G_O,{\G_O}^+\}$ both having odd spectrum. 
To perform this task, we consider mirror di-Cayley (sum) graphs defined over certain finite commutative rings with unity $R$. 
More precisely, in Proposition~\ref{prop: isosp R} we show that if $R$ is a finite commutative ring with identity having Artin's decomposition 
    $$ R=R_1 \times \cdots \times R_s,$$ 
where each $R_i$ is a local ring, with at least one $R_i$ of even size and at least one $R_j$ of odd size, then 
    $$ \{ MX(R;R^*,T), MX^+(R;R^*,T) \} $$ 
are isospectral for any $T \in \{ \{0\}, R^*, R^* \cup \{0\} \}$. 
Finally, putting together the previous results, we get our main result Theorem \ref{thm: main}, asserting that there exists isospectral pairs of integral Cayley (sum) graphs $\{ X(G,S), X^+(G,S)\}$, both having even symmetric spectrum or both having odd non-symmetric spectrum.

\section{Three families of mirror di-Cayley graphs} \label{sec: 3 families} 
For a group $G$ and subsets $S_\ell, S_r, T$ of $G$ one can define a \textit{di-Cayley graph} $DX(G;S_\ell, S_r, T)$ as a generalization of the bi-Cayley graph $BX(G;S_\ell, S_r, T)$, by allowing the edges defined by the connection set $T$ between the Cayley covers $X(G,S_\ell)$ and $X(G,S_r)$ to be directed (for the definition and properties of bi-Cayley (or semiregular) graphs we refer to \cite{GL1}, \cite{GL2}, \cite{KMS}, \cite{LF}, \cite{Marusic}, \cite{RJ}).
Then, a mirror di-Cayley graph $MX(G;S,T)$ is just a di-Cayley graph $DX(G;S_\ell, S_r, T)$ with equal connection sets $S_\ell=S_r=S$, that is with isomorphic covers.
More precisely, we have the following.
	
\begin{defi} \label{def: dicayleys}
Let $G$ be a group and $S,T \subset G$.
A \textit{mirror di-Cayley graph} 
    $$ MX(G;S,T) $$ is the graph having vertex set $G\times \Z_2$ and where there is a directed edge from $(h,i)$ to $(g,j)$ if and only if 
\begin{equation} \label{eq: dicays}
	j=i \quad \text{and} \quad gh^{-1} \in S \qquad \text{or} \qquad  j=i+1 \quad \text{and} \quad gh^{-1} \in T.    
\end{equation}
\textit{Mirror di-Cayley sum graphs} $MX^+(G;S,T)$ are defined similarly, changing $gh^{-1}$ by $gh$ above. 
\end{defi}

Throughout the paper, we will use the following.

\noindent
\textit{Notation:}  
($i$) When considering mirror di-Cayley graphs and mirror di-Cayley sum graphs together, we will denote them by $MX^*(G;S,T)$ and use the abbreviation MDCGs. Several times we will also use the abbreviation $\G_{G;S,T}^*$ for $MX^*(G;S,T)$.

($ii$) We say that $S$ is the \textit{connection} set and $T$ is the \textit{di-connection} set of $\G_{G;S,T}^*$. The isomorphic subgraphs $X^*(G,S) \times \{0\}$ and $X^*(G,S) \times \{1\}$ are called the \textit{mirrors} of $\G_{G;S,T}^*$. 

($iii$) Following the nomenclature for bi-Cayley graphs, we say that the graph $MX(G;S,\{e\})$ is the \textit{one-matching mirror di-Cayley graph.}

From now on, we will consider the three families of mirror di-Cayley (sum) graphs 
\begin{equation}
\begin{gathered}
\G^*_e=MX^*(G;S,\{e\}), \qquad \G^*_S=MX^*(G;S,S), \\
     \G^*_{S\cup e} = MX^*(G;S,S \cup \{e\}),    
\end{gathered}
\end{equation}
that is the graphs $MX^*(G;S,T)$ with di-connection set $T \in \mathcal{S}$ with 
\begin{equation} \label{eq: S}
    \mathcal{S} = \big\{ \{e\}, S, S\cup \{e\} \big\}. 
\end{equation}
Since $MX^*(G;S,S \cup \{e\}) = MX^*(G;S,S))$ if $e\in S$, from now on we will assume that $S$ is not a subgroup and that $e\notin S$ for the graph $MX^*(G;S,S \cup \{e\})$, unless explicit mention to the contrary.
We denote this family of MDCGs by $\mathcal{F}$. That is, 
\begin{equation} \label{eq: famili F}
    \mathcal{F} = \{ MX^*(G;S,T) \}_{T \in \mathcal{S}} 
\end{equation}
for $G$ a group, $S \subset G$ and $\mathcal{S}$ as in \eqref{eq: S}.

\subsection*{Basic properties}
Here we give the basic structural properties of the graphs in the family $\mathcal{F}$ considered. 
	
We will need the following definitions. A subset $S$ of a group $G$ is \textit{symmetric} if it is closed under inversion, 
i.e.\@ 
    $$ S=S^{-1},$$ 
(this holds in particular if $S$ is a subgroup); and $S$ is \textit{antisymmetric} if 
    $$ S \cap S^{-1} = \varnothing. $$ 
The subset $S$ is called \textit{normal} if for every $gh\in S$ we have that $hg \in S$. 
This happens if and only if 
    $$ gSg^{-1} =S \quad \text{for every $g\in G$},$$ 
i.e.\@ $S$ is closed under conjugation (hence, union of conjugacy classes). On the other hand, we say that $S$ is \textit{antinormal}, if 
    $$S \cap N_G(S) =\varnothing,$$
where 
    $N_G(S) = \{g\in G : gSg^{-1} =S \}$ is the normalizer of $S$ in $G$.
Of course, if $G$ is abelian then any $S$ is closed under conjugation.

The following statements about the directedness, loops, and the regularity degrees of the graphs in $\mathcal{F}$
can be easily deduced from the definitions.

\begin{prop} \label{prop: directedness} 
Let $G$ be a group and $S \subset G$. 
Consider the mirror di-Cayley graph $\G = MX(G;S,T)$ and the mirror di-Cayley sum graph $\G^+ = MX^+(G;S,T)$, where $T=\{e\}, S$ or $S\cup \{e\}$. Then, we have: \sk 
		
\noindent 
$(a)$ \textsc{Directedness}.
The graph $\G$ is undirected (resp.\@ directed) if and only if $S$ is symmetric (resp.\@ antisymmetric). 
The graph $\G^+$ is undirected (resp.\@ directed) if and only if $S$ is normal (resp.\@ antinormal). \sk 
		
\noindent 
$(b)$ \textsc{Loops}.
The graph $\G$ has loops at every vertex if and only if $e\in S$.
The graph $\G^+$ has loops at $(x,0)$ and $(x,1)$ if and only if $x^2\in S$. This happens for instance for the elements $x$ of order 2 in $S$ if $e\in S$.
In particular, if $S$ is a subgroup of $G$ then both $\G$ and $\G^+$ are looped. \sk 
		
\noindent 
$(c)$ \textsc{Regularity}.
The graphs $MX^*(G;S,\{e\})$ are $(s+1)$-regular, the graphs $MX^*(G;S,S)$ are $(2s)$-regular and the graphs $MX^*(G;S,S\cup\{e\})$ are $(2s+1)$-regular, where $|S|=s$.
\end{prop}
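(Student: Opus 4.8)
The plan is to verify each of the three claims directly from Definition~\ref{def: dicayleys}, since each amounts to an elementary counting or case-analysis argument. The key observation throughout is that the out-neighbours of a vertex $(h,i)$ split into two disjoint types according to \eqref{eq: dicays}: the \emph{intra-mirror} neighbours, lying in the same mirror $G \times \{i\}$ and governed by the condition $gh^{-1} \in S$ (resp.\@ $gh \in S$ in the sum case), and the \emph{crossing} neighbours, lying in the opposite mirror $G \times \{i+1\}$ and governed by $gh^{-1} \in T$ (resp.\@ $gh \in T$). For each part I would fix a vertex, enumerate both types, and read off the conclusion.

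For part $(a)$ on directedness, the plan is to recall that a graph is undirected precisely when its adjacency relation is symmetric, i.e.\@ whenever there is an edge from $(h,i)$ to $(g,j)$ there is also one from $(g,j)$ to $(h,i)$. For the mirror di-Cayley graph $\G$, the intra-mirror edges are reversible iff $gh^{-1} \in S \Leftrightarrow (gh^{-1})^{-1} = hg^{-1} \in S$, which is exactly the condition $S = S^{-1}$; the crossing edges are reversible iff $T = T^{-1}$, but since $T \in \{\{e\}, S, S \cup \{e\}\}$ this condition is automatically implied by $S = S^{-1}$ (and $e = e^{-1}$ always), so the symmetry of $S$ governs everything. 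The directed case ($S$ antisymmetric, $S \cap S^{-1} = \varnothing$) is the dual statement: no intra-mirror edge is reversible. For $\G^+$ the same analysis applies but with $gh$ replacing $gh^{-1}$: the edge-reversal condition becomes $gh \in S \Leftrightarrow hg \in S$, which is precisely the normality condition, and antinormality is its antithesis; I would spell out the equivalence between the pointwise condition ``$gh \in S \Rightarrow hg \in S$'' and the conjugation-invariance $gSg^{-1} = S$ stated in the definition of normal.

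For part $(b)$ on loops, I would observe that a loop at $(x,i)$ is an edge from $(x,i)$ to itself, so we take $(h,i) = (g,j) = (x,i)$ in \eqref{eq: dicays}. The condition $j = i$ forces the first alternative, giving $gh^{-1} = e \in S$ for $\G$ (independent of $x$, hence loops at all vertices simultaneously) and $gh = x^2 \in S$ for $\G^+$ (hence a loop at $(x,i)$ exactly when $x^2 \in S$). The remark about elements of order $2$ is immediate: if $e \in S$ and $x^2 = e$ then $x^2 \in S$. The final sentence follows because a subgroup contains $e$ and is closed, giving loops in both graphs.

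For part $(c)$ on regularity, the plan is simply to count out-neighbours of a fixed vertex $(h,i)$, using that the two neighbour types are disjoint (they sit in different mirrors). The number of intra-mirror neighbours equals $|S| = s$ in all cases, and the number of crossing neighbours equals $|T|$, which is $1$ when $T = \{e\}$, is $s$ when $T = S$, and is $s+1$ when $T = S \cup \{e\}$ (using our standing assumption $e \notin S$). Adding gives degrees $s+1$, $2s$, and $2s+1$ respectively, and regularity holds because the count is independent of $(h,i)$. The main obstacle, to the extent there is one, is purely bookkeeping: one must be careful in $(c)$ that the crossing and intra-mirror neighbour sets are genuinely disjoint so that the degree is the sum rather than something smaller, and in $(b)$ that for $\G^+$ the loop condition is $x^2 \in S$ rather than $e \in S$; both are settled by the explicit form of \eqref{eq: dicays}, so no serious difficulty arises.
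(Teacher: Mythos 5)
Your proposal is correct and follows exactly the route the paper intends: the paper offers no written proof of this proposition, stating only that the facts ``can be easily deduced from the definitions'', and your case-by-case verification from Definition~\ref{def: dicayleys} (splitting neighbours into intra-mirror and crossing types) is precisely that deduction. The only point worth polishing is the ``directed'' direction of $(a)$ when $T=\{e\}$, where the crossing edges $(h,i)\to(h,i+1)$ are always reversible regardless of $S$, so the graph is of mixed type rather than purely directed --- but that is an imprecision in the statement itself, not a gap in your argument.
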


\noindent \textsc{Note}. 
Items $(a)$ and $(b)$ also hold if we consider the Cayley graphs $\G=X(G,S)$ and $\G^+=X^+(G,S)$.

\sk 

Notice that, for a fixed pair $(G,S)$, by comparing regularity degrees we have that the graphs in the three families 
$MX^*(G;S,\{e\})$, $MX^*(G;S,S)$ and $MX^*(G;S,S\cup\{e\})$ are all different, except for trivial exceptions when $|S|=0,1$: 

$\bullet$ For $S=\varnothing$, we have 
\begin{equation*} \label{eq: MGSe nP2}
 MX^*(G;S, \{e\}) = MX^*(G;S,S\cup\{e\})= MX^*(G; \varnothing, \{e\}) \simeq nP_2    
\end{equation*}
with $n=|G|$, where $nP_2 =P_2 \sqcup \cdots \sqcup P_2$ denotes $n$ disjoint copies of $P_2$. 

$\bullet$ For $S=\{e\}$, with $n=|G|$, we have  
\begin{gather*}
 MX(G;S, \{e\}) = MX(G;S,S) = MX(G;\{e\},\{e\}) = n \mathring{P}_2, \\
    MX^+(G;S, \{e\}) = MX^+(G;S,S) = MX^+(G;\{e\},\{e\}) \simeq  mC_4 \sqcup \ell \mathring{P}_2, 
\end{gather*}    
with 
    $$m=\#\{ g\in G : g=g^{-1}\} \qquad \text{and} \qquad \ell=\#\{ g\in G : g \ne g^{-1}\},$$ 
where $\mathring{P}_2$ denotes $P_2$ with one loop added to each vertex.

We now show that the MDCGs have a Cayley structure.

\begin{prop} \label{prop: cayley structure} 
Let $G$ be a group and $S \subset G$. 
Consider the mirror di-Cayley (sum) graph $MX^*(G;S,T)$ where $T=\{e\}, S$ or $S\cup \{e\}$. If $\Z_2=\{0,1\}$, then
\begin{equation} \label{eq: bicay=cay}
MX^*(G;S,T) = X^*\big( G\times \Z_2, (S\times \{0\}) \cup (T \times \{1\}) \big). 
\end{equation}
In particular, $MX^*(G;S,S) = X^* ( G \times \Z_2, S \times \Z_2 )$.
\end{prop}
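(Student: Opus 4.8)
The plan is to prove both identities in \eqref{eq: bicay=cay} simultaneously by a direct verification at the level of adjacency, comparing the edge criterion of Definition~\ref{def: dicayleys} with the defining criterion of a Cayley (sum) graph over the product group $G \times \Z_2$. Both sides share the vertex set $G \times \Z_2$, so it suffices to show that the two graphs have the same directed edges; the $*$ notation lets me treat the Cayley and the Cayley-sum cases in parallel.

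First I would fix the group structure on $G \times \Z_2$. Inversion there is $(h,i)^{-1} = (h^{-1}, -i) = (h^{-1}, i)$, since $-i = i$ in $\Z_2$, and the product is componentwise. Writing $U = (S \times \{0\}) \cup (T \times \{1\})$ for the connection set, the Cayley graph $X(G \times \Z_2, U)$ has a directed edge from $(h,i)$ to $(g,j)$ exactly when $(g,j)(h,i)^{-1} \in U$. A one-line computation gives $(g,j)(h,i)^{-1} = (gh^{-1}, j+i)$, and membership in $U$ splits into the two cases $gh^{-1} \in S$ with $j+i = 0$, and $gh^{-1} \in T$ with $j+i = 1$. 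Using $j+i = 0 \iff j = i$ and $j+i = 1 \iff j = i+1$ in $\Z_2$, these coincide precisely with the two disjuncts in \eqref{eq: dicays}, which establishes the identity for the Cayley graph.

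For the sum version I would repeat the computation replacing the difference $gh^{-1}$ by the product $gh$: the Cayley sum graph $X^+(G \times \Z_2, U)$ has an edge from $(h,i)$ to $(g,j)$ when $(g,j)(h,i) = (gh, j+i) \in U$, which again breaks into $gh \in S$ with $j = i$ and $gh \in T$ with $j = i+1$, matching the sum version of \eqref{eq: dicays}. Hence both identities hold and are captured uniformly by $MX^*$ and $X^*$. The displayed special case then follows by taking $T = S$, since $U = (S \times \{0\}) \cup (S \times \{1\}) = S \times \Z_2$.

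Since everything reduces to matching two membership conditions, I do not expect a genuine obstacle here; the only point requiring care is the bookkeeping in the second coordinate, namely the identity $-i = i$ used for inverses and the equivalences $j+i = 0 \iff j = i$ and $j+i = 1 \iff j = i+1$. Stating these once at the outset is what allows the Cayley and sum cases to be handled in a single stroke without ambiguity.
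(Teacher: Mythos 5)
Your proposal is correct and follows essentially the same route as the paper's proof: a direct comparison of vertex sets and of the two edge-membership conditions, splitting on whether the second coordinate of the difference (or sum) is $0$ or $1$. The only difference is that you spell out the group arithmetic in $G \times \Z_2$ (the identity $-i=i$ and the computation of $(g,j)(h,i)^{-1}$) a bit more explicitly than the paper does.
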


\begin{proof}
In fact, the vertex set of both graphs in \eqref{eq: bicay=cay} is the same, $G \times \Z_2$. Relative to the edges,  let $g,h \in G$ and $i,j \in \{0,1\}$. 
For $MX(G;S,T)$ we have that 
\begin{gather*}
	(g,i) \sim (h,i) \quad \Leftrightarrow \quad (hg^{-1},0) \in S\times \{0\}, \\ 
	(g,i) \sim (h,j) \quad \Leftrightarrow \quad (hg^{-1},1) \in T \times \{1\}, 
\end{gather*}
for $i\ne j$; while for $MX^+(G;S,T)$ we have to change $g^{-1}$ by $g$ above.
Thus, the edge sets of the mirror di-Cayley (sum) graphs $MX^*(G;S,T)$ coincides with the edge sets of the Cayley (sum) graphs $X^*\big( G\times \Z_2, (S\times \{0\}) \cup (T \times \{1\}) \big)$, correspondingly.
\end{proof}

\subsection*{Product decompositions}
We next show that the mirror di-Cayley (sum) graphs $MX^*(G;S,T)$ considered can be decomposed as simple products of the underlying Cayley graph $X^*(G,S)$.

There are many graph products in the literature. The most common are the \emph{Cartesian product} $(\Box)$, the \emph{direct product} $(\times)$, and the \emph{strong product} $(\boxtimes)$, all defined on the Cartesian product of vertex sets. Given two graphs $\Gamma_1$ and $\Gamma_2$, their product is a graph with vertex set 
    $$V(\Gamma_1) \times V(\Gamma_2),$$ 
and adjacency defined as follows:
	
$\bullet$ In the \emph{Cartesian product} $\Gamma_1 \Box \Gamma_2$, vertices $(u_1, u_2)$ and $(v_1, v_2)$ are adjacent if either $u_1 = v_1$ and $u_2v_2 \in E(\Gamma_2)$ or else $u_2 = v_2$ and $u_1v_1 \in E(\Gamma_1)$.
	
$\bullet$ In the \emph{direct product} $\Gamma_1 \times \Gamma_2$, adjacency holds between $(u_1, u_2)$ and $(v_1, v_2)$ when $u_1v_1 \in E(\Gamma_1)$ and $u_2v_2 \in E(\Gamma_2)$. This product is also known as Kronecker $(\otimes)$ product.
	
$\bullet$ The \emph{strong product} $\Gamma_1 \boxtimes \Gamma_2$ combines the previous two:
\begin{equation} \label{eq: strong product}
	E(\Gamma_1 \boxtimes \Gamma_2) = E(\Gamma_1 \Box \Gamma_2) \cup E(\Gamma_1 \times \Gamma_2).
\end{equation}

All three products are \emph{commutative} and \emph{associative}.
The products are well defined for any kind of simple graphs, that is directed, undirected or of mixed type.

We next show that the mirror di-Cayley (sum) graphs $MX^*(G;S,T)$ considered can be decomposed as products between the associated Cayley graph $X^*(G,S)$ and 2-path graphs.
	
\begin{thm} \label{teo: prods}
Let $G$ be a group and $S$ a subset of $G$. We have that
	\begin{equation} \label{eq: prod}
		\begin{aligned}
			MX^{*}(G;S,\{e\}) 		&= X^{*}(G,S) \Box P_2,\\
			MX^{*}(G;S,S) 			&= X^{*}(G,S)\times \mathring{P_2},\\
			MX^{*}(G;S,S\cup\{e\})  &= X^{*}(G,S) \boxtimes P_2,
		\end{aligned}
	\end{equation}
where $P_2$ and $\mathring{P_2}$ are the $2$-path and the looped $2$-path, respectively.
\end{thm}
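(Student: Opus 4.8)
The plan is to prove all three identities in \eqref{eq: prod} by the most direct route: showing that the two graphs on each side have the same vertex set and the same (directed) edge set. Since both $P_2$ and $\mathring{P}_2$ have vertex set $\{0,1\}$, every product on the right-hand side has vertex set $V(X^*(G,S))\times\{0,1\}=G\times\Z_2$, which is exactly the vertex set of the MDCG on the left. So the whole content is to match edges, and by the definition of the three products it suffices, in each case, to describe when a directed edge from $(h,i)$ to $(g,j)$ exists in the product and compare it with the adjacency rule \eqref{eq: dicays} of Definition \ref{def: dicayleys}. I would carry out the comparison once for $MX$ (using $gh^{-1}$) and note that the sum case $MX^+$ is \emph{verbatim} the same after replacing $gh^{-1}$ by $gh$, since the graph products only see the edge set $E(X^*(G,S))$ and not the group operation that produced it.

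For the Cartesian case, in $X^*(G,S)\Box P_2$ there is a directed edge from $(h,i)$ to $(g,j)$ precisely when either $i=j$ and $gh^{-1}\in S$, or $g=h$ and $i\neq j$; the second alternative is exactly the di-connection rule $j=i+1$ with $gh^{-1}\in\{e\}$, so this reproduces $MX^*(G;S,\{e\})$. For the direct case the one point needing care is that the loops of $\mathring{P}_2$ make every ordered pair of vertices of $\mathring{P}_2$ (the two loops together with the central edge) an edge, so $\mathring{P}_2$ imposes no constraint on the $\Z_2$-coordinate; hence in $X^*(G,S)\times\mathring{P}_2$ adjacency reduces to ``$gh^{-1}\in S$ with $i,j$ arbitrary'', which is exactly the rule for $MX^*(G;S,S)$, where the single condition $gh^{-1}\in S$ governs both the $j=i$ and the $j=i+1$ edges. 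For the strong case I would use \eqref{eq: strong product}: the edges of $X^*(G,S)\boxtimes P_2$ are the union of the Cartesian edges (the $T=\{e\}$ crossing edges together with the mirror edges $gh^{-1}\in S$) and the direct edges with $P_2$ (the crossing edges with $gh^{-1}\in S$), and together these are precisely the edges of $MX^*(G;S,S\cup\{e\})$ given by $[\,j=i,\ gh^{-1}\in S\,]$ or $[\,j=i+1,\ gh^{-1}\in S\cup\{e\}\,]$.

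An equivalent and perhaps cleaner route, which I would at least mention, is to invoke Proposition \ref{prop: cayley structure} to rewrite each MDCG as a single Cayley (sum) graph on $G\times\Z_2$ and then apply the standard connection-set formulas for products of Cayley graphs, namely $X(G_1,S_1)\Box X(G_2,S_2)=X(G_1\times G_2,(S_1\times\{e\})\cup(\{e\}\times S_2))$, $X(G_1,S_1)\times X(G_2,S_2)=X(G_1\times G_2,S_1\times S_2)$, and the corresponding union for $\boxtimes$. Writing $P_2=X(\Z_2,\{1\})$ and $\mathring{P}_2=X(\Z_2,\Z_2)$, these formulas turn the right-hand sides of \eqref{eq: prod} into $X^*(G\times\Z_2,(S\times\{0\})\cup(T\times\{1\}))$ with $T=\{e\},S,S\cup\{e\}$, matching Proposition \ref{prop: cayley structure} exactly. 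The main (and essentially only) obstacle is bookkeeping: keeping the additions of the $\Z_2$-index straight, treating directedness and loops consistently, and handling the Cayley and Cayley sum versions uniformly; no genuine difficulty arises beyond these routine checks.
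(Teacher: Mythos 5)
Your proposal is correct and follows essentially the same route as the paper's proof: identify the common vertex set $G\times\Z_2$ and match the edge rules of Definition \ref{def: dicayleys} against the adjacency rules of $\Box$, $\times$ and $\boxtimes$ case by case, with the sum version handled by replacing $gh^{-1}$ by $gh$. Your treatment of the strong product (mirror edges from the Cartesian part only, crossing edges from the union) is in fact stated a touch more carefully than in the paper, and the alternative derivation via Proposition \ref{prop: cayley structure} is a valid supplementary check.
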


\begin{proof}
Let $\Gamma_1=X^*(G,S)$ and $\Gamma_2=P_2$ or $\mathring{P_2}$. Note that 
	$$ V(X^*(G,S)) = G \qquad \text{and} \qquad V(P_2) = V(\mathring{P_2}) = \{0,1\}. $$
Then, the vertex group of each product is $G\times\{0,1\}$, which coincides with the set of vertices of $MX^*(G;S,T)$ for any $T \in \{\{e\}, S, S\cup\{e\} \}$.
		
Now let us see, in each case, when two vertices are adjacent. Recall that
	$$ E(X(G,S)) = \{uv\,:\,vu^{-1}\in S\} \qquad \text{and} \qquad  E(X^+(G,S)) = \{uv\,:\,vu \in S\}$$
and that $E(P_2) = \{01\}$ and $E(\mathring{P_2}) = \{00,01,11\}$.
		
In the first case, $(h,i) \sim (g,j)$ in $MX(G;S,\{e\})$ (resp.\@ $MX^+(G;S,\{e\})$), with $h,g \in G$ and $i,j \in \{0,1\}$, if and only if 
	$$ i = j \:\: \text{and} \:\: gh^{-1} \in S \: (\text{resp. } gh \in S) \quad \text{or} \quad i \ne j \:\: \text{and} \:\: gh^{-1} \in \{e\} \: (\text{resp. } gh \in \{e\}).$$
Then, we get that $E(MX^*(G;S,\{e\})) = E(X^*(G,S)\Box P_2)$ and hence the graph
$MX^*(G;S,\{e\})$ equals $X^*(G,S)\Box P_2$.
		
In the second case, 
$(h,i) \sim (g,j)$ in $MX(G;S,S)$ (resp.\@ $MX^+(G;S,S)$), with $h,g \in G$ and $i,j \in \{0,1\}$, if and only if 
		$$ i = j \:\: \text{and} \:\: gh^{-1} \in S \: (\text{resp. } gh \in S) \quad \text{or} \quad i \ne j \:\: \text{and} \:\: gh^{-1} \in S \: (\text{resp. } gh \in S).$$
Then, we have that $E( MX^*(G;S,S)) = E(X^*(G,S)\times \mathring{P_2})$ and thus $MX^*(G;S,S) = X^*(G,S) \times \mathring{P_2}$.
		
Finally, 
$(h,i) \sim (g,j)$ in $MX(G;S,S \cup \{e\})$, 
with $h, g \in G$ and $i,j \in \{0,1\}$, if and only if 
	$$ i = j \:\: \text{ and } \:\: gh^{-1} \in S \cup \{e\} 
		\qquad \text{or} \qquad i \ne j \:\: \text{ and } \:\: gh^{-1} \in S \cup \{e\},$$
and the same holds for $MX^+(G;S,S \cup \{e\})$ changing $gh^{-1} \in S \cup \{e\}$ by $gh \in S \cup \{e\}$ above.

Then, the set of edges are equal, $E(MX^*(G;S,S \cup \{e\})) = E(X^*(G,S) \boxtimes P_2)$, and hence $MX^*(G;S,S \cup \{e\}) = X^*(G,S) \boxtimes P_2 $. 
The proof of \eqref{eq: prod} is thus complete.  
\end{proof}

In the next remark, we continue the study of the relation between the MDCGs in the family $\mathcal{F}$ and the product decompositions with 2-paths. We will need the following notation. 
For any graph $\G$ we will denote by $\mathring{\G}$ the graph $\G$ with a loop at every vertex, that is 
    $$ \mathring{\G} = \G \cup \mathring{0}_n ,$$ 
where $\mathring{0}_n$ is the empty graph with $n=|G|$ vertices and a loop at every vertex. 

Also, notice that 
	\begin{equation} \label{eq: unions}
		\begin{split}
			MX^*(G;S,T \cup T') &= MX^*(G;S,T) \cup MX^*(G;S,T'), \\
			MX^*(G;S \cup S', T) &= MX^*(G;S,T) \cup MX^*(G;S',T),    
		\end{split}
	\end{equation}
for any $S,S',T,T'$ subsets of a group $G$. 
	
\begin{rem} \label{rem: other products}
We now give the products of $X^*(G,S)$ with $P_2$ or $\mathring{P}_2$ not appearing in Theorem \ref{teo: prods}.

\noindent ($i$)
It is not difficult to check that 
	$$  X^*(G,S)\times P_2 = MX^*(G;\varnothing, S)$$
which is a mirror di-Cayley graph, although not in the family $\mathcal{F}$, 
and that 
	$$ X^*(G,S) \Box \mathring{P}_2 = \begin{cases}
		X^*(G,S) \Box P_2 & \qquad \text{if $e\in S$}, \\[1mm] 
		(X^*(G,S) \Box P_2)^\circ & \qquad \text{if $e\notin S$}.   
    \end{cases}$$
Also, using the definition of $\boxtimes$ and the previous results, one can see that  
		$$ X^*(G,S) \boxtimes  \mathring{P}_2 = \begin{cases}
			MX^*(G;S,S) & \qquad \text{if $e\in S$}, \\[1mm] 
			(MX^*(G;S,S\cup \{e\})^\circ & \qquad \text{if $e\notin S$}.   
		\end{cases}$$
		
\noindent ($ii$)
We can also obtain the looped versions of the graphs in $\mathcal{F}$: 
	$$  MX^*(G;S,S)^\circ = (X(G,S) \times \mathring{P}_2)^\circ, \qquad MX^*(G;S,\{e\})^\circ =
		(X(G,S) \Box \mathring{P}_2)^\circ $$
and we saw that $(MX^*(G;S,S\cup \{e\})^\circ = X^*(G,S) \boxtimes  \mathring{P}_2$ for $e\notin S$.
We will denote by $\mathring{\mathcal{F}}$ the family of graphs in $\mathcal{F}$ with loops.
\end{rem}

\subsection*{NEPS of graphs} 
A more general definition of the above classic products is given by the concept of NEPS. 	
Let $\mathcal{B}$ be a set of binary non-zero $n$-tuples, i.e.\@ 
	$$\mathcal{B} \subset\{0,1\}^n \smallsetminus \{(0,\dots,0)\}$$ 
such that for every $i = 1,\dots, n,$ there exist $\beta\in\mathcal{B}$ with $\beta_i=1$. The \textit{non-complete extended p-sum (NEPS)} of $n$ graphs $\G_1,\dots,\G_n$ with basis $\mathcal{B}$, denoted by 
	$$\text{NEPS}(\G_1,\dots,\G_n;\mathcal{B}),$$ 
is the graph with the vertex set 
    $$ V(\G_1)\times \cdots \times V(\G_n),$$ 
in which two vertices $(u_1,\dots,u_n)$ and $(v_1,\dots,v_n)$ are adjacent if and only if there exist $(\beta_1,\dots,\beta_n)\in\mathcal{B}$ such that $u_i$ is adjacent to $v_i$ in $\G_i$ whenever $\beta_i=1$ and $u_i=v_i$ whenever $\beta_i=0$.

\goodbreak 
For $n=2$, we have the following instances of $\text{NEPS}(\G_1,\G_2; \mathcal{B})$:
\begin{itemize}
	\item the \textit{Cartesian product} $\G_1\Box \G_2$, when $\mathcal{B}=\{(1,0),(0,1)\}$; \sk 
		
	\item the \textit{direct product} $\G_1 \times \G_2$ when, $\mathcal{B}=\{(1,1)\}$; \sk 
		
	\item the \textit{strong product} $\G_1 \boxtimes \G_2$ when, $\mathcal{B}=\{(1,0),(0,1),(1,1)\}$; and \sk  
		
	\item the \textit{strong sum}  $\G_1 \oplus \G_2$ when, $\mathcal{B}=\{(1,0),(1,1)\}$ (see \cite{Stevanovic}).
\end{itemize}
	
In contrast to the Cartesian, direct, and strong products, which are all commutative and associative, the strong sum preserves associativity but fails to be commutative in general. The strong sum has not to be confused with the ringsum of graphs, also denoted by $\oplus$. The \textit{ring sum} $G_1 \oplus G_2$ of the graphs $G_1=(V_1, E_1)$, $G_2=(V_2, E_2)$ is the graph with vertex set $V_1 \cup V_2$ and edges set $E_1 \Delta E_2 = (E_1 \cup E_2) - (E_1\cap E_2)$.

Relative to di-Cayley graphs, Theorem \ref{teo: prods} shows that the product of the Cayley graph $X^*(G,S)$ with the $2$-path graph $P_2$ (or with the $2$-path with loops $\mathring{P_2}$) is a mirror di-Cayley graph for the first three NEPS of two graphs. 
In the case of the strong sum of graphs we have the following.  

\begin{lem} \label{lem: strong sum}
For any group $G$ and a subset $S \subset G$ we have:
\begin{equation} \label{eq: strong sum}
		\begin{aligned}
			X^{*}(G,S) \oplus P_2 &= MX^{*}(G;S,S), \\[.5mm]
			P_2 \oplus X^{*}(G,S) &= MX^{*}(G;\varnothing,S \cup\{e\}).
		\end{aligned}
	\end{equation} 
    
\end{lem}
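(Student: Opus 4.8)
The plan is to verify both identities directly at the level of adjacency relations, using the NEPS description of the strong sum with basis $\mathcal{B}=\{(1,0),(1,1)\}$ and the adjacency rule for MDCGs from Definition \ref{def: dicayleys}. Unwinding the NEPS definition, two vertices $(u_1,u_2)$ and $(v_1,v_2)$ are adjacent in $\G_1 \oplus \G_2$ if and only if $u_1 \sim v_1$ in $\G_1$ and, in addition, either $u_2 = v_2$ or $u_2 \sim v_2$ in $\G_2$. The one point to keep in mind is that $\oplus$ is not commutative, so the two stated equalities correspond to the two different orderings of the factors, and I would treat them separately. Throughout I would identify the two vertex sets via $(g,i) \leftrightarrow (i,g)$, both being $G \times \Z_2$.

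For the first line of \eqref{eq: strong sum}, I would take $\G_1 = X^*(G,S)$ and $\G_2 = P_2$, so that the first coordinate ranges over $G$ and the second over $\{0,1\}$. Since $E(P_2)=\{01\}$, any two distinct elements of $\{0,1\}$ are adjacent in $P_2$, while equal elements trivially satisfy $u_2=v_2$; hence the clause ``$u_2=v_2$ or $u_2 \sim v_2$ in $P_2$'' holds for every pair $i,j \in \{0,1\}$. Consequently $(g,i) \sim (h,j)$ in $X^*(G,S)\oplus P_2$ if and only if $g \sim h$ in $X^*(G,S)$, i.e.\ $gh^{-1}\in S$ (resp.\ $gh \in S$), independently of $i,j$. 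This is exactly the adjacency rule of $MX^*(G;S,S)$, in which \eqref{eq: dicays} with $T=S$ collapses to $gh^{-1}\in S$ (resp.\ $gh \in S$) for all $i,j$; it is also consistent with the identity $MX^*(G;S,S)=X^*(G,S)\times \mathring{P_2}$ from Theorem \ref{teo: prods}, since every pair of vertices of $\mathring{P_2}$ is adjacent.

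For the second line I would reverse the roles, taking $\G_1 = P_2$ and $\G_2 = X^*(G,S)$. Now the NEPS condition requires $u_1 \sim v_1$ in $P_2$, that is the two $\Z_2$-coordinates must be distinct, together with ``$u_2=v_2$ or $u_2 \sim v_2$ in $X^*(G,S)$'', that is $g=h$ or $gh^{-1}\in S$ (resp.\ $gh \in S$). These combine precisely to: $i \ne j$ and $gh^{-1}\in S\cup\{e\}$ (resp.\ $gh \in S\cup\{e\}$). On the other side, in $MX^*(G;\varnothing,S\cup\{e\})$ the empty connection set kills the first alternative of \eqref{eq: dicays}, forcing $j=i+1$, while the di-connection set $T=S\cup\{e\}$ yields exactly $gh^{-1}\in S\cup\{e\}$ (resp.\ $gh \in S\cup\{e\}$). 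The two adjacency rules therefore coincide.

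Since in each case both graphs share the same vertex set $G\times \Z_2$ and I will have matched their edge sets, the graph equalities follow at once. I do not expect any genuine obstacle: the argument is a bookkeeping comparison of adjacency conditions. The only things that require care are respecting the non-commutativity of $\oplus$ (which is what distinguishes the two orderings, and hence the two different connection/di-connection sets), and carrying the Cayley and Cayley sum cases along simultaneously through the $gh^{-1}$ versus $gh$ distinction encoded in the notation $X^*$ and $MX^*$.
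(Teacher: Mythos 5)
Your proposal is correct and follows essentially the same route as the paper: unwind the NEPS definition of the strong sum with basis $\{(1,0),(1,1)\}$ and match the resulting adjacency condition against Definition \ref{def: dicayleys} in each of the two orderings. If anything, your version is more careful than the paper's, which for the second identity abbreviates the adjacency condition to ``$u_1 \ne v_1$'' and leaves the clause $gh^{-1}\in S\cup\{e\}$ (resp.\ $gh\in S\cup\{e\}$) implicit, whereas you state it explicitly.
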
	
	
\begin{proof}
By definition, two vertices $(u_1,u_2)$ and $(v_1,v_2)$ are adjacent in the strong sum if and only if $u_1$ is adjacent to $v_1$ and either $u_2=v_2$ or $u_2$ is adjacent to $v_2$. 

To prove the first equality in \eqref{eq: strong sum}, notice that, in $X^{*}(G,S) \oplus P_2$, two vertices $(u_1,u_2)$ and $(v_1,v_2)$ are adjacent if and only if $u_1$ and $v_1$ are adjacent in $X^*(G,S)$, then 
$X^{*}(G,S) \oplus P_2 = MX^{*}(G;S,S)$.
For the second identity, 
two vertices $(u_1,u_2)$ and $(v_1,v_2)$ are adjacent in $P_2 \oplus X^{*}(G,S)$ if and only if $u_1\neq v_1$, then
$P_2 \oplus X^{*}(G,S) = MX^{*}(G;\varnothing,S \cup\{e\})$.
\end{proof}

By \eqref{eq: prod} and \eqref{eq: strong sum} we have two decompositions of the mirror di-Cayley graph $MX^{*}(G;S,S)$ into different products between $X^*(G,S)$ and $P_2$ or $\mathring{P_2}$: 
\begin{equation} \label{eq: XGSx+P2}
    MX^{*}(G;S,S) = X^{*}(G,S) \times \mathring{P_2} = X^{*}(G,S) \oplus P_2.     
\end{equation}	

Summing up we have the following.
\begin{coro}
Any mirror di-Cayley (sum) graph $MX^*(G;S,T)$ with $T \in \mathcal{S}$, where 
$ \mathcal{S}= \{ \{e\}, S, S\cup \{e\}\}$, 
can be obtained as a NEPS:
\begin{enumerate}[$(a)$]
    \item of $X^*(G,S)$, using the four NEPS of two graphs: $\square$, $\times$, $\Box$ and $\oplus$; \msk 
    
    \item of $X^*(G,S)$ and $P_2$. 
\end{enumerate}
\end{coro}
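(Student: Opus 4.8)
The plan is to obtain the corollary as a direct bookkeeping consequence of Theorem \ref{teo: prods} and Lemma \ref{lem: strong sum}, which between them already realize every graph in $\mathcal{F}$ as a product of $X^*(G,S)$ with a $2$-path. First I would recall, from the NEPS discussion preceding the statement, that each of the four NEPS of two graphs is determined by a fixed basis $\mathcal{B} \subset \{0,1\}^2 \smallsetminus \{(0,0)\}$: the Cartesian product $\Box$ corresponds to $\mathcal{B} = \{(1,0),(0,1)\}$, the direct product $\times$ to $\mathcal{B} = \{(1,1)\}$, the strong product $\boxtimes$ to $\mathcal{B} = \{(1,0),(0,1),(1,1)\}$, and the strong sum $\oplus$ to $\mathcal{B} = \{(1,0),(1,1)\}$. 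Hence ``being a NEPS of $X^*(G,S)$ with a $2$-path through one of these operations'' is precisely what the earlier identities assert.

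For part $(a)$, I would simply collect the three identities \eqref{eq: prod} of Theorem \ref{teo: prods}, which express $MX^*(G;S,\{e\})$, $MX^*(G;S,S)$ and $MX^*(G;S,S\cup\{e\})$ as the Cartesian product $X^*(G,S)\Box P_2$, the direct product $X^*(G,S)\times\mathring{P_2}$, and the strong product $X^*(G,S)\boxtimes P_2$, respectively, together with the strong-sum identity $X^*(G,S)\oplus P_2 = MX^*(G;S,S)$ from \eqref{eq: strong sum} in Lemma \ref{lem: strong sum}. Reading these four realizations off against the four bases above shows that each member of $\mathcal{F}$ is a NEPS of $X^*(G,S)$ with a $2$-path, and that collectively all four NEPS operations $\Box$, $\times$, $\boxtimes$, $\oplus$ occur. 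This is exactly the assertion of $(a)$.

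For part $(b)$ the requirement strengthens to using the loop-free path $P_2$ as the second factor for \emph{every} graph in $\mathcal{F}$. The Cartesian and strong realizations from Theorem \ref{teo: prods} already use $P_2$, so these cases are immediate. The only case needing attention is the middle graph $MX^*(G;S,S)$: its direct-product form in \eqref{eq: prod} uses the looped path $\mathring{P_2}$, not $P_2$. Here I would invoke the strong-sum identity of Lemma \ref{lem: strong sum}, which re-expresses the same graph as $X^*(G,S)\oplus P_2$, now with $P_2$. This yields the uniform representation of all three members of $\mathcal{F}$ as a NEPS of $X^*(G,S)$ and $P_2$, completing $(b)$.

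There is no genuine mathematical obstacle: the corollary is a repackaging of identities already proved, so the argument is purely a matter of matching each graph to the appropriate basis $\mathcal{B}$. The single subtlety worth flagging is the loop discrepancy in the middle case, namely that the natural direct-product description of $MX^*(G;S,S)$ requires $\mathring{P_2}$, so that part $(b)$ genuinely relies on the alternative strong-sum decomposition \eqref{eq: XGSx+P2} rather than on Theorem \ref{teo: prods} alone.
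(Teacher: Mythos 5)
Your proposal is correct and takes essentially the same route as the paper, which simply cites Theorem \ref{teo: prods} and Lemma \ref{lem: strong sum} as making the corollary immediate. Your elaboration—in particular flagging that part $(b)$ for $MX^*(G;S,S)$ must fall back on the strong-sum decomposition $X^*(G,S)\oplus P_2$ because the direct-product form uses the looped path $\mathring{P_2}$—is exactly the bookkeeping the paper leaves implicit.
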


\begin{proof}
It is immediate by Theorem \ref{teo: prods} and Lemma \ref{lem: strong sum}. 
\end{proof}

\section{Spectrum} \label{sec: spec}
In this section, we compute the spectrum of the mirror di-Cayley (sum) graphs in the family $\mathcal{F}$ defined in \eqref{eq: S}--\eqref{eq: famili F}. 
We first use the product decomposition of the these graphs to put the spectra of $MX^*(G;S,T)$, with $T \in \mathcal{S}$, in terms of the spectrum of $X^*(G,S)$, and then use the known spectrum of these Cayley (sum) graphs to express the spectra of $MX^*(G;S,T)$, with $T \in \mathcal{S}$, in terms of irreducible characters of $G$.

\subsection*{The spectrum via products}
Now, using the product structure of the graphs in $\mathcal{F}$, we give the spectrum of $MX^*(G;S,T)$ for $T=\{e\}, S$ and $S \cup \{e\}$ in terms of the spectrum of $X^*(G,S)$.
	
\begin{prop} \label{prop: spec bicayleys} 
Let $G$ be a group and $S \subset G$. If $\text{Spec}(X^{*}(G,S))=\{[\lambda_i^*]^{m_i^*}\}_{i\in I}$ then
\begin{equation} \label{eq: specbi}
\begin{aligned}
Spec(MX^{*}(G;S,\{e\}))      &= \{[\lambda_i^{*} + 1]^{m_i^*},[\lambda_i^*-1]^{m_i^*}\}_{i\in I},\\
Spec(MX^{*}(G;S,S))          &= \{[2\lambda_i^{*}]^{m_i^*}\}_{i\in I} \cup \{[0]^{|G|}\}, \\
Spec(MX^{*}(G;S,S\cup\{e\})) &= \{[2\lambda_i^{*}+1]^{m_i^*}\}_{i\in I} \cup \{[-1]^{|G|}\}.
\end{aligned}
\end{equation}
\end{prop}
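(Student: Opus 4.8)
The plan is to read off the three spectra directly from the product decompositions of Theorem~\ref{teo: prods}, combined with the standard description of the spectrum of a product of two graphs in terms of the spectra of its factors. Recall that if $A_1,A_2$ denote the adjacency matrices of $\Gamma_1,\Gamma_2$, then $A(\Gamma_1\Box\Gamma_2)=A_1\otimes I+I\otimes A_2$, $A(\Gamma_1\times\Gamma_2)=A_1\otimes A_2$ and $A(\Gamma_1\boxtimes\Gamma_2)=(A_1+I)\otimes(A_2+I)-I\otimes I$. Consequently, if $\lambda$ ranges over the eigenvalues of $\Gamma_1$ and $\mu$ over those of $\Gamma_2$, the eigenvalues of the Cartesian, direct and strong products are $\lambda+\mu$, $\lambda\mu$ and $\lambda+\mu+\lambda\mu$, respectively, each occurring with multiplicity $m(\lambda)\,m(\mu)$ (see \cite{BK}).

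First I would record the spectra of the two small factors. Since $P_2$ has adjacency matrix $\left(\begin{smallmatrix}0&1\\1&0\end{smallmatrix}\right)$ and $\mathring{P_2}$ has adjacency matrix $\left(\begin{smallmatrix}1&1\\1&1\end{smallmatrix}\right)$, we get $Spec(P_2)=\{1,-1\}$ and $Spec(\mathring{P_2})=\{2,0\}$, all eigenvalues being simple. Substituting $Spec(X^*(G,S))=\{[\lambda_i^*]^{m_i^*}\}_{i\in I}$ into the three formulas then gives the result. For $MX^*(G;S,\{e\})=X^*(G,S)\Box P_2$, pairing each $\lambda_i^*$ with $\mu\in\{1,-1\}$ yields $\lambda_i^*+1$ and $\lambda_i^*-1$, each with multiplicity $m_i^*$, which is the first line. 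For $MX^*(G;S,S)=X^*(G,S)\times\mathring{P_2}$, pairing $\lambda_i^*$ with $\mu=2$ gives $2\lambda_i^*$ with multiplicity $m_i^*$, while pairing with $\mu=0$ gives the eigenvalue $0$ irrespective of $i$; these merge into a single eigenvalue $0$ of multiplicity $\sum_{i\in I}m_i^*=|G|$, yielding the second line. Finally, for $MX^*(G;S,S\cup\{e\})=X^*(G,S)\boxtimes P_2$, the value $\mu=1$ produces $\lambda_i^*+1+\lambda_i^*=2\lambda_i^*+1$ (multiplicity $m_i^*$), whereas $\mu=-1$ produces $\lambda_i^*-1-\lambda_i^*=-1$ for every $i$, again collapsing into one eigenvalue $-1$ of multiplicity $|G|$, which is the third line.

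The one point demanding care, and the main obstacle, is that by Proposition~\ref{prop: directedness} the graph $X^*(G,S)$ may be directed, so its adjacency matrix need not be symmetric nor diagonalizable; the spectrum must then be understood as the multiset of eigenvalues of the (in general non-symmetric) adjacency matrix, counted with algebraic multiplicity. The eigenvalue formulas above nevertheless persist in this generality: putting $A_1$ and $A_2$ in Schur triangular form, the Kronecker identities for the three products become block triangular matrices whose diagonal entries are exactly $\lambda+\mu$, $\lambda\mu$ and $\lambda+\mu+\lambda\mu$ with the stated multiplicities, so no diagonalizability assumption is needed and the bookkeeping above applies verbatim to the directed members of $\mathcal{F}$. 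The only genuinely substantive step is thus the collapse of the $0$-- and $(-1)$--eigenvalues across all $i$ into a single eigenvalue of multiplicity $|G|$, which rests on the identity $\sum_{i\in I}m_i^*=|V(X^*(G,S))|=|G|$.
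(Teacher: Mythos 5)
Your proposal is correct and follows essentially the same route as the paper: decompose the three MDCGs as the Cartesian, direct and strong products of $X^*(G,S)$ with $P_2$ or $\mathring{P_2}$ via Theorem~\ref{teo: prods}, apply the standard product-spectrum formulas from \cite{BK}, and observe that the $0$- and $(-1)$-eigenvalues collapse to multiplicity $\sum_i m_i^*=|G|$. Your additional remark justifying the product-spectrum formulas for non-symmetric adjacency matrices via Schur triangularization is a careful touch the paper omits, but it does not change the argument.
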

	
\begin{proof}
It is well-known that (see, for instance, \cite{BK}), given graphs $\G_1$ and $\G_2$, if $Spec(\Gamma_1)=\{[\lambda_i]^{m_i}\}_{i \in I}$ and $Spec(\Gamma_2)=\{[\mu_j]^{n_j}\}_{j \in J}$, then we have 
\begin{equation} \label{spec}
	\begin{aligned}
		Spec(\Gamma_1\Box\Gamma_2) &= \{[\lambda_i + \mu_j]^{m_i n_j} \}_{(i,j) \in I \times J}, \\[1mm]
		Spec(\Gamma_1 \times \Gamma_2) &= \{[\lambda_i \mu_j]^{m_i n_j}\}_{(i,j) \in I \times J}, \\[1mm]
		Spec(\Gamma_1 \boxtimes \Gamma_2) &= \{[\lambda_i+\mu_j+\lambda_i \mu_j]^{m_i n_j}\}_{(i,j) \in I \times J}.
	\end{aligned}
\end{equation}
	
By using this, and putting 
$\Gamma_1=X^*(G,S)$ and $\Gamma_2=P_2$ (respectively $\mathring{P_2}$), we can obtain the spectrum of the products. We know that 
	$$Spec(P_2)=\{[1]^1,[-1]^1\} \qquad \text{and} \qquad Spec(\mathring{P_2})=\{[0]^1,[2]^1\}.$$ 
Then, by \eqref{eq: prod} and \eqref{spec}, we have that
\begin{equation*} \label{eq: Specs 1y2}
	\begin{aligned}
		Spec(MX^*(G;S,\{e\})) &= Spec(X^*(G,S) \Box P_2)
		= \{[\lambda_i^* + 1]^{m_i^*},[\lambda_i^*-1]^{m_i^*}]\}_{i\in I},\\
		Spec(MX^*(G;S,S))     &= Spec(X^*(G,S)\times \mathring{P_2})
		= \{[2\lambda_i^*]^{m_i^*},[0]^{m_i^*}\}_{i\in I},
	\end{aligned}
\end{equation*}	
and 
    $S pec(MX^*(G;S,S\cup\{e\})) = Spec(X^*(G,S)\boxtimes P_2)$, 
which equals 
    $$ \{ [\lambda_i^*+1+\lambda_i^*]^{m_i^*},[\lambda_i^*+(-1)+ (-1)\lambda_i^*]^{m_i^*}\}_{i\in I} = \{[2\lambda_i^*+1]^{m_i^*},[-1]^{m_i^*}\}_{i\in I}.$$

Finally, notice that the eigenvalue $0$ in $MX^*(G;S,S)$ appears with multiplicity 
    $$ m^*(0) = \sum_{i\in I} m_i^* = |G|. $$ 
Similarly, the multiplicity of the eigenvalue $-1$ in $MX^*(G;S,S\cup\{e\})$ is given by 
    $$m^*(-1)=|G|.$$
In this way, we have obtained the expression \eqref{eq: specbi} for the spectra of the graphs.
\end{proof}

\begin{rem}
By the previous proposition, the spectra of the four graphs involved determine one another. 
More precisely, if we know $Spec(X^{*}(G,S))$ we can deduce $Spec(MX^{*}(G;S,T))$ for any $T \in \{S,\{e\},S\cup \{e\}\}$. Conversely, if we know $Spec(MX^{*}(G;S,T))$ for one $T$, then we can deduce $Spec(X^{*}(G,S))$ and $Spec(MX^{*}(G;S,T'))$ for $T' \in \{S,\{e\},S\cup \{e\}\}$ with $T' \ne T$.
\end{rem}

\begin{rem}
By Theorem $2.23$ of \cite{Cvetkovic}, the spectrum of the strong sum $\G_1\oplus \G_2$ is 
    $$ Spec(\G_1\oplus \G_2) = \{[\lambda_i+\lambda_i\mu_j]^{m_i\cdot n_j}\}_{(i,j)\in I\times J},$$ 
where $Spec(\G_1)=\{[\lambda_i]^{m_i}\}_{i \in I}$ and $Spec(\G_2)=\{[\mu_j]^{n_j}\}_{j \in J}$. 
Now, since $MX^*(G;S,S) = X^*(G,S) \oplus P_2$, the spectrum of $MX^*(G;S,S)$ is given by 
	$$ Spec(MX^*(G;S,S)) =  \{ [2\lambda^*_i]^{m_i^*}, [0]^{m_i^*}\}_{i\in I} = \{ [2\lambda^*_i]^{m_i^*} \}_{i\in I} \cup \{[0]^{|G|}\},$$ 
where $Spec(X^*(G,S))=\{[\lambda^*_i]^{m_i^*}\}_{i\in I}$, which coincides with the previous proposition. 
\end{rem}

\subsection*{The spectrum in terms of characters}
Proposition \ref{prop: spec bicayleys} gives the spectrum of the mirror di-Cayley (sum) graphs $MX^*(G;S,T)$ with $T\in \mathcal{S}$ in terms of the spectrum of the associated Cayley (sum) graph $X^*(G,S)$. 
Fortunately, the spectrum of the Cayley graph $X(G,S)$ and the Cayley sum graph $X^+(G,S)$ are known, for $S$ a normal 
subset ($S$ symmetric is also required for the sum graph), in terms of irreducible characters of $G$.

We will need the following standard notation. 
For $S$ a subset of $G$ and $\chi$ a character of $G$, we put
\begin{equation} \label{eq: chiS}
    \chi(S) =\sum_{s\in S}\chi(s).    
\end{equation} 
	
The following is a very well-known result.

\begin{lem} \label{lem: eigenvalues X*GS}
Let $G$ be a finite group and $S$ a normal subset of $G$. Then, we have:

\begin{enumerate}[$(a)$]
    \item The eigenvalues of the Cayley graph $\Gamma=X(G,S)$ are given by
			$$ {\rm Eig}(\G) = \left\{ \lambda_\chi(\G) = \frac{\chi(S)}{\chi(1)} : \chi \in \hat{G} \right\}.$$
			
    \item The eigenvalues of the Cayley sum graph $\G^+=X^+(G,S)$ are real and given by
			$$ {\rm Eig}(\G^+) = \{ \lambda^\pm_\chi(\G^+) = \pm |\chi(S)| : \chi\in\hat{G} \}, $$ 
			if $G$ is abelian, and by 
			$$ {\rm Eig}(\G^+) = \left\{ \lambda^\pm_\chi(\G^+) =\pm \frac{\chi(S)}{\chi(1)} : \chi \in \hat G \right \},$$ 
			if $G$ is not abelian and $S$ is also symmetric.
\end{enumerate}
In both cases, $\chi(S)$ is as defined in \eqref{eq: chiS}.
\end{lem}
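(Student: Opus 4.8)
The plan is to realise both adjacency operators on the group algebra $\C[G]$ and to exploit Schur's lemma together with the Wedderburn decomposition $\C[G]\cong\bigoplus_{\chi\in\hat{G}}M_{\chi(1)}(\C)$, under which the regular representation $\rho_{\mathrm{reg}}$ contains each irreducible $\rho_\chi$ with multiplicity $\chi(1)$. Throughout I would identify a function on $G$ with an element of $\C[G]$ and read off eigenvalues from the action of a single group-algebra element.

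For part $(a)$, the first step is to identify the adjacency matrix of $\G=X(G,S)$ with the operator of convolution by the group-algebra element $\sigma_S=\sum_{s\in S}s$ acting in $\rho_{\mathrm{reg}}$; indeed, in the standard basis $\{\delta_g\}_{g\in G}$ the adjacency action is given by left (or right) multiplication by $\sigma_S$. The key point is that since $S$ is \emph{normal}, $\sigma_S$ is a sum of class sums and therefore lies in the centre $Z(\C[G])$, so by Schur's lemma it acts as a scalar $c_\chi$ on each irreducible block $\rho_\chi$. I would pin down $c_\chi$ by taking traces: $\Tr\rho_\chi(\sigma_S)=\chi(S)$ while $\Tr(c_\chi\,\mathrm{Id})=c_\chi\,\chi(1)$, whence $c_\chi=\chi(S)/\chi(1)$. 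Since $\rho_\chi$ occurs in $\rho_{\mathrm{reg}}$ with multiplicity $\chi(1)$, this eigenvalue appears with multiplicity $\chi(1)^2$, and $\sum_\chi\chi(1)^2=|G|$ accounts for the whole spectrum, giving the eigenvalue set in $(a)$.

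For part $(b)$, the sum graph is governed by the operator $(Af)(u)=\sum_{s\in S}f(su^{-1})$, which, because of the inversion, is \emph{not} a central convolution, so the argument of $(a)$ does not transfer verbatim. In the abelian case I would test $A$ directly on characters: a one-line computation gives $A\chi=\chi(S)\,\overline{\chi}$ and $A\overline{\chi}=\overline{\chi(S)}\,\chi$, so $A$ preserves each two-dimensional subspace $\langle\chi,\overline{\chi}\rangle$ with $\chi\neq\overline{\chi}$ and acts there by $\left(\begin{smallmatrix}0&\overline{\chi(S)}\\\chi(S)&0\end{smallmatrix}\right)$, whose eigenvalues are $\pm|\chi(S)|$; the real characters $\chi=\overline{\chi}$ contribute the single eigenvalue $\chi(S)=\pm|\chi(S)|$. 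This yields the $\pm|\chi(S)|$ description.

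For the non-abelian case (with $S$ symmetric as well as normal), I would use the factorisation $A^{\mathrm{sum}}=P_\iota\,A^{\mathrm{Cay}}$, where $P_\iota$ is the permutation matrix of the inversion $g\mapsto g^{-1}$ (so $P_\iota^2=I$) and $A^{\mathrm{Cay}}$ is the central operator analysed in $(a)$; decomposing $\C[G]$ into $\rho_\chi$-isotypic components and tracking how $P_\iota$ intertwines $\rho_\chi$ with its contragredient $\rho_{\chi^\vee}$ should produce $2\times2$ blocks with eigenvalues $\pm\chi(S)/\chi(1)$, the symmetry $S=S^{-1}$ being exactly what makes $\chi(S)$ real so that the $\pm$ signs come out cleanly. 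The hard part will be precisely this last step: unlike in $(a)$, $A^{\mathrm{sum}}$ is not central, so Schur's lemma is not directly applicable and one must control the inversion twist on each isotypic block; alternatively, one can invoke the standard references, since the identity is classical.
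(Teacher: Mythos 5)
The paper does not actually prove this lemma: it is stated as ``a very well-known result'' with no argument, the content being standard (and traceable to \cite{PV} and the references therein). So your proposal can only be judged on its own merits, and on those it is essentially correct and is the standard representation-theoretic proof. Part $(a)$ is right: with the paper's edge convention the adjacency operator is $(Af)(h)=\sum_{s\in S}f(sh)$, i.e.\@ the action of $\sigma_{S^{-1}}$ rather than $\sigma_S$ in the regular representation, but normality of $S$ gives normality of $S^{-1}$, centrality still applies, and since $\chi(S^{-1})=\overline{\chi(S)}=\overline{\chi}(S)$ with $\overline{\chi}$ again ranging over $\hat G$, the eigenvalue multiset $\{\chi(S)/\chi(1)\}$ is unchanged; your trace computation and the multiplicity count $\chi(1)^2$ are correct. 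The abelian half of $(b)$ is also correct as written: the $2\times 2$ blocks on $\langle\chi,\overline{\chi}\rangle$ give $\pm|\chi(S)|$, and real characters contribute $\chi(S)=\pm|\chi(S)|$.

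The only incomplete step is the one you flag yourself, the non-abelian sum case, and it closes in one line from your own setup. Write $A^{\mathrm{sum}}=P_\iota C$ with $C=A^{\mathrm{Cay}}$ central. Since $S$ is symmetric and normal, $\chi(S)=\chi(S^{-1})=\overline{\chi(S)}$ is real, so $C$ acts by the same real scalar $c_\chi=\chi(S)/\chi(1)$ on the isotypic components $W_\chi$ and $W_{\overline{\chi}}$, which $P_\iota$ interchanges (or preserves, when $\chi$ is real). Hence $P_\iota$ commutes with $C$ on $W_\chi\oplus W_{\overline{\chi}}$, so $(A^{\mathrm{sum}})^2=P_\iota C P_\iota C=C^2$, and as $P_\iota$ is an involution the eigenvalues of $A^{\mathrm{sum}}$ on that block are $\pm c_\chi$; in particular they are real, which is the remaining claim of $(b)$. (As with the real characters in the abelian case, which of the two signs actually occurs on a self-conjugate block depends on the $\pm 1$-eigenspaces of $P_\iota$ there, so strictly one gets ${\rm Eig}(\G^+)\subseteq\{\pm\chi(S)/\chi(1)\}$ --- but the lemma as stated in the paper is loose on exactly the same point.) With that addition your argument is a complete proof.
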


Recall that $\chi(1)= \dim V_\chi$.
If $G$ is abelian then $\chi(1)=1$ for any $\chi$ and every subset $S$ of $G$ is normal.

We are now in a position to give the spectrum of the MDCGs in the family $\mathcal{F}$. We do this in two separate results. 
We begin with the spectrum of mirror di-Cayley graphs in $\mathcal{F}$. 	
\begin{thm} \label{thm: spec 3fam chars}
Let $G$ be a group and $S$ a normal subset of $G$. 
Then, the spectra of the di-Cayley graphs in $\mathcal{F}$ are given by: 
\begin{equation} \label{eq: spec di chars}
	\begin{aligned}
		Spec(MX(G;S,\{e\})) &= \left\{ \left[  \tfrac{\chi(S)}{\chi(1)} \pm 1 \right]^{m_\chi} \right\}_{\chi \in \hat G},\\[1mm]
				Spec(MX(G;S,S)) &= \left\{ \left[ 2\tfrac{\chi(S)}{\chi(1)} \right]^{m_\chi} \right\}_{\chi \in \hat G} \cup \{[0]^{|G|}\}, \\[1mm]
				Spec(MX(G;S,S\cup\{e\})) &= 
				\left\{ \left[ 2\tfrac{\chi(S)}{\chi(1)}+1 \right]^{m_\chi} \right\}_{\chi \in \hat G} \cup \{[-1]^{|G|}\},
		\end{aligned}
\end{equation}    
\nopagebreak 
where $m_\chi$ is the multiplicity of the eigenvalue $\lambda_\chi$ in each corresponding graph. 
\end{thm}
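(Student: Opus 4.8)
The plan is to combine two results already established in the excerpt: the product-based spectrum formula of Proposition~\ref{prop: spec bicayleys} and the character-theoretic eigenvalue description of Lemma~\ref{lem: eigenvalues X*GS}. The idea is that Proposition~\ref{prop: spec bicayleys} expresses $Spec(MX(G;S,T))$ entirely in terms of the eigenvalues $\lambda_i^*$ of the underlying Cayley graph $X(G,S)$, so all that remains is to substitute in the explicit values of those eigenvalues. Since $S$ is assumed normal, part~$(a)$ of Lemma~\ref{lem: eigenvalues X*GS} applies directly to the (non-sum) Cayley graph $X(G,S)$ and gives $\lambda_\chi = \frac{\chi(S)}{\chi(1)}$ for each irreducible character $\chi \in \hat{G}$.

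First I would recall that, for the mirror di-Cayley graph (the non-sum case), Proposition~\ref{prop: spec bicayleys} yields
\begin{align*}
Spec(MX(G;S,\{e\})) &= \{[\lambda_i + 1]^{m_i}, [\lambda_i - 1]^{m_i}\}_{i \in I}, \\
Spec(MX(G;S,S)) &= \{[2\lambda_i]^{m_i}\}_{i \in I} \cup \{[0]^{|G|}\}, \\
Spec(MX(G;S,S \cup \{e\})) &= \{[2\lambda_i + 1]^{m_i}\}_{i \in I} \cup \{[-1]^{|G|}\},
\end{align*}
where the $\lambda_i$ are the eigenvalues of $X(G,S)$ with multiplicities $m_i$. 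Then I would index the eigenvalues of $X(G,S)$ by the irreducible characters rather than by an abstract set $I$: by Lemma~\ref{lem: eigenvalues X*GS}$(a)$ each $\chi \in \hat{G}$ contributes the eigenvalue $\lambda_\chi = \frac{\chi(S)}{\chi(1)}$, with multiplicity $m_\chi$. Substituting $\lambda_\chi = \frac{\chi(S)}{\chi(1)}$ for $\lambda_i$ in each of the three displayed formulas produces exactly the three expressions in \eqref{eq: spec di chars}, completing the proof.

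The one genuine subtlety worth flagging concerns bookkeeping of multiplicities and the indexing passage from $\{\lambda_i\}_{i \in I}$ to $\{\lambda_\chi\}_{\chi \in \hat{G}}$. In Proposition~\ref{prop: spec bicayleys} the index set $I$ ranges over \emph{distinct} eigenvalues with their total multiplicities, whereas in Lemma~\ref{lem: eigenvalues X*GS} the characters $\chi$ need not give distinct values $\frac{\chi(S)}{\chi(1)}$, so several characters may collapse to the same eigenvalue. I would therefore note that the total multiplicity of a given eigenvalue $\lambda$ of $X(G,S)$ equals $\sum_{\chi : \lambda_\chi = \lambda} m_\chi$, and that the final formulas are written with the understanding that eigenvalues are listed per character (equal eigenvalues being amalgamated). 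This is the point I expect to require the most care, though it is standard once the convention is fixed. The verification that $\sum_{\chi \in \hat{G}} m_\chi = |G|$, which justifies the multiplicities $|G|$ of the extra eigenvalues $0$ and $-1$, follows from the fact that the eigenvalues $\lambda_\chi$ account for all $|G|$ eigenvalues of $X(G,S)$; this is exactly the computation already carried out at the end of the proof of Proposition~\ref{prop: spec bicayleys}.
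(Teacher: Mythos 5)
Your proof is correct and follows exactly the paper's argument: the paper's proof consists of the single line ``Just put together Proposition~\ref{prop: spec bicayleys} and item $(a)$ in Lemma~\ref{lem: eigenvalues X*GS}.'' Your additional remarks on re-indexing the eigenvalues by characters and on $\sum_{\chi} m_\chi = |G|$ are sound and only make explicit the bookkeeping the paper leaves implicit.
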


\goodbreak 

\begin{proof}
Just put together Proposition \ref{prop: spec bicayleys} and item ($a$) in Lemma \ref{lem: eigenvalues X*GS}. 	
\end{proof}
	
Now we give the spectrum of mirror di-Cayley sum graphs in $\mathcal{F}$.

\begin{thm} \label{thm: spec 3fam chars sum}
Let $G$ be a group and $S$ a normal subset of $G$. Then, the spectrum of the di-Cayley sum graphs in $\mathcal{F}$ is real and given by: 

\begin{enumerate}[$(a)$]
	\item If $G$ is abelian, then we have  
		\begin{equation} \label{eq: spec di+ ab}
			\begin{aligned}
				Spec(MX^{+}(G;S,\{e\})) &= \left\{ \left[  \pm |\chi(S)| +1 \right]^{m_\chi}, \left[  \pm |\chi(S)| -1 \right]^{m_\chi} \right\}_{\chi \in \hat G}, \\ 
				Spec(MX^{+}(G;S,S)) &= 
				\{[\pm 2|\chi(S)| ]^{m_\chi}\}_{\chi \in \hat G} \cup \{[0]^{|G|}\}, \\
				Spec(MX^{+}(G;S,S\cup\{e\})) &= 
				\{[\pm 2|\chi(S)|+1]^{m_\chi}\}_{\chi \in \hat G} \cup \{[-1]^{|G|}\}.
			\end{aligned}
		\end{equation}    
			
	\item If $G$ is not abelian and $S$ is also symmetric, then we have
		\begin{equation} \label{eq: spec di+ non-ab}
			\begin{aligned}
				Spec(MX^{+}(G;S,\{e\})) &= \left\{ \left[  \pm \tfrac{\chi(S)}{\chi(1)} +1 \right]^{m_\chi}, \left[  \pm \tfrac{\chi(S)}{\chi(1)}-1 \right]^{m_\chi} \right\}_{\chi \in \hat G}, \\ 
				Spec(MX^{+}(G;S,S)) &= 
				\left\{ \left[ \pm 2\tfrac{\chi(S)}{\chi(1)} \right]^{m_\chi} \right\}_{\chi \in \hat G} \cup \{[0]^{|G|} \}, \\
				Spec(MX^{+}(G;S,S\cup\{e\})) &= 
				\left\{ \left[ \pm 2 \tfrac{\chi(S)}{\chi(1)} +1 \right]^{m_\chi} \right\}_{\chi \in \hat G} \cup \{[-1]^{|G|}\}.
			\end{aligned}
		\end{equation}   
	\end{enumerate}
In both items $(a)$ and $(b)$, $m_\chi$ denotes the multiplicity of the eigenvalue $\lambda_\chi$ in each corresponding graph. 
\end{thm}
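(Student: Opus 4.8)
The plan is to prove this exactly as in Theorem \ref{thm: spec 3fam chars}, simply replacing the ordinary Cayley graph by the Cayley sum graph throughout. The key observation is that Proposition \ref{prop: spec bicayleys} is stated uniformly for $X^*(G,S)$ and relies only on the product decomposition of Theorem \ref{teo: prods}, which holds equally for $MX^+$. Hence it applies verbatim with $\Gamma_1 = X^+(G,S)$ as the underlying graph: writing $Spec(X^+(G,S)) = \{[\lambda_i^+]^{m_i^+}\}_{i \in I}$, the three spectra of $MX^+(G;S,T)$ for $T \in \mathcal{S}$ are obtained from the $\lambda_i^+$ by the same three operations as before, namely adjoining the shifts $\lambda_i^+ \pm 1$, doubling to $2\lambda_i^+$ and adjoining $[0]^{|G|}$, and doubling-and-shifting to $2\lambda_i^+ + 1$ and adjoining $[-1]^{|G|}$. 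All that remains is to substitute the explicit eigenvalues of $X^+(G,S)$ supplied by item $(b)$ of Lemma \ref{lem: eigenvalues X*GS}.

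First I would record the eigenvalue list for the base graph. By Lemma \ref{lem: eigenvalues X*GS}$(b)$, when $G$ is abelian the eigenvalues of $X^+(G,S)$ are $\pm |\chi(S)|$ for $\chi \in \hat G$, while when $G$ is non-abelian and $S$ is symmetric they are $\pm \tfrac{\chi(S)}{\chi(1)}$. In either case these values are real, which accounts for the realness assertion in the statement: the three operations of Proposition \ref{prop: spec bicayleys} only scale the base eigenvalues by $2$ and add integers, so realness is preserved and the full spectrum of each $MX^+(G;S,T)$ is real.

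Next I would feed each of the two eigenvalue lists into the three formulas of Proposition \ref{prop: spec bicayleys}, treating $\lambda_i^+ = \pm|\chi(S)|$ (resp.\ $\pm\tfrac{\chi(S)}{\chi(1)}$) as the input. For $T = \{e\}$ this yields the fourfold list $\pm|\chi(S)| \pm 1$ (resp.\ $\pm \tfrac{\chi(S)}{\chi(1)} \pm 1$); for $T = S$ it yields $\pm 2|\chi(S)|$ (resp.\ $\pm 2\tfrac{\chi(S)}{\chi(1)}$) together with $0$ of multiplicity $|G|$; and for $T = S \cup \{e\}$ it yields $\pm 2|\chi(S)| + 1$ (resp.\ $\pm 2\tfrac{\chi(S)}{\chi(1)} + 1$) together with $-1$ of multiplicity $|G|$. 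These are exactly the expressions in \eqref{eq: spec di+ ab} and \eqref{eq: spec di+ non-ab}.

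The one delicate point, and the step I would be most careful about, is the bookkeeping of the multiplicities $m_\chi$. Unlike the Cayley graph case, the Cayley sum graph eigenvalues come in $\pm$ pairs indexed by $\chi$, and the fine correspondence between characters and eigenspaces (the pairing of $\chi$ with $\bar\chi$) is already encoded in Lemma \ref{lem: eigenvalues X*GS}$(b)$. Since Proposition \ref{prop: spec bicayleys} is insensitive to how the base spectrum was computed and merely transports multiplicities through the product decomposition, no new multiplicity analysis is needed: the $m_\chi$ in the statement are by definition the multiplicities inherited from $Spec(X^+(G,S))$, while the eigenvalues $0$ and $-1$ accumulate the extra multiplicity $|G|$ exactly as recorded in Proposition \ref{prop: spec bicayleys}. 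Thus the proof reduces to the mechanical substitution described above, split according to whether $G$ is abelian or non-abelian with $S$ symmetric.
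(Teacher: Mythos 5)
Your proposal is correct and follows exactly the paper's route: the authors also prove this theorem by combining Proposition \ref{prop: spec bicayleys} with item $(b)$ of Lemma \ref{lem: eigenvalues X*GS}, and your extra remarks on realness and multiplicity bookkeeping only make explicit what the paper leaves implicit.
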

	
\begin{proof}
It follows directly from Proposition \ref{prop: spec bicayleys} and item ($b$) in Lemma \ref{lem: eigenvalues X*GS}. 
\end{proof}
	
Let $\G$ be any loopless graph and $A$ its adjacency matrix. If $\mathring{\G}$ is the looped  version of $\G$, then its adjacency matrix is given by $\mathring{A}=A+I$, and hence the eigenvalues of $\mathring{\G}$ are the eigenvalues of $\G$ plus 1, that is 
	$$Spec(\mathring{\G})=Spec(\G)+1.$$ 
Hence, from Theorem \ref{thm: spec 3fam chars}, we can easily obtain the spectrum of the graphs in $\mathring{\mathcal{F}}$.

\section{Even and odd integral graphs} \label{sec: integral}
In this section, we obtain explicit examples of integral MDCGs. In particular, we obtain examples of \textit{even graphs} (all even eigenvalues) and of \textit{odd graphs} (all odd eigenvalues), which are also Cayley graphs. For general results on integral Cayley graphs we refer to the survey paper of Liu-Zhou \cite{LZ}.

To better study integral spectrum of graphs, we introduce the following refinement.  
\begin{defi} \label{defi: even/odd}
If $\G$ is an integral graph, we define the \textit{even part} and the \textit{odd part} of $Spec(\G)$, respectively, as the sets of even eigenvalues and the set of odd eigenvalues of $\G$, correspondingly denoted by $Spec_{even}(\G)$ and $Spec_{odd}(\G)$. 
We say that $\G$ is \textit{even} (resp.\@ \textit{odd}) if $Spec(\G$) is even (resp.\@ odd), in symbols 
\begin{equation}
\begin{tabular}{ccccc}
    $\G \text{ is even }$ & \quad $\Leftrightarrow$ &\quad  $Spec(\G) \subset 2\Z$  & \quad  $\Leftrightarrow$ & \quad  $Spec_{odd}(\G) = \varnothing$, \\[2mm]
    $\G \text{ is odd }$ & \quad $\Leftrightarrow$ & \quad   $Spec(\G) \subset 2\Z+1$ & \quad $\Leftrightarrow$ & \quad  $Spec_{even}(\G) = \varnothing$.
 \end{tabular}
\end{equation}
\end{defi}

We begin by giving two observations which are direct consequences of the expressions for the spectrum of the MDCGs obtained in the previous section. The first one has to do with the nature of the spectrum. 

\begin{coro} \label{coro: integral}
Let $G$ be a group and $S$ a subset of $G$.

\noindent $(a)$ 
For any $T \in \{ \{e\}, S, S\cup\{e\} \}$ and $\mathcal{N} \in \{\mathbb{Z}, \mathbb{R}, \mathbb{C}\}$ we have that  
    $$Spec(MX^{*}(G;S,T)) \subset \mathcal{N} 
       \qquad \Leftrightarrow \qquad 
       Spec(X^{*}(G,S))\subset \mathcal{N}. $$ 
In particular, $MX^{*}(G;S,T)$ is integral if and only if $X^{*}(G,S)$ is integral. \msk 

\noindent $(b)$ 
In the case of integral spectrum, if $X^*(G,S)$ is integral we have: \sk 
 
$(i)$ 
$MX^{*}(G;S,\{e\})$ is even (resp.\@ odd) if and only $X^*(G,S)$ is odd (resp.\@ even). 

$(ii)$ 
$Spec(MX^{*}(G;S,S))$ is even and $Spec(MX^{*}(G;S,S \cup \{e\}))$ is odd. \sk 

Here, it is to be understood that the statements are either for $X(G,S)$ and $MX(G;S,T)$ or for $X^+(G,S)$ and $MX^+(G;S,T)$, respectively.
\end{coro}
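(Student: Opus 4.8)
The plan is to read everything directly off the spectral formulas in Proposition \ref{prop: spec bicayleys}, since that result already expresses every eigenvalue of $MX^*(G;S,T)$ as an affine function, with integer coefficients, of an eigenvalue $\lambda_i^*$ of $X^*(G,S)$, together with a short list of extra integer eigenvalues ($0$ when $T=S$, and $-1$ when $T=S\cup\{e\}$). Because Proposition \ref{prop: spec bicayleys} is stated uniformly for both the Cayley and the Cayley sum versions via the $*$ notation, a single argument will cover both cases at once, matching the convention recorded at the end of the corollary.

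For part $(a)$, I would first observe that each of the relevant maps, namely $\lambda \mapsto \lambda \pm 1$ (for $T=\{e\}$), $\lambda \mapsto 2\lambda$ (for $T=S$), and $\lambda \mapsto 2\lambda+1$ (for $T=S\cup\{e\}$), has the form $\lambda \mapsto a\lambda + b$ with $a,b \in \Z$ and $a \ne 0$. Such a map is a bijection of $\C$ that restricts to bijections of $\R$ and of $\Z$, so $\lambda \in \mathcal{N}$ if and only if $a\lambda + b \in \mathcal{N}$ for each $\mathcal{N} \in \{\Z, \R, \C\}$. The extra eigenvalues $0$ and $-1$ lie in all three sets, hence never obstruct the inclusion in either direction. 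It follows that $Spec(MX^*(G;S,T)) \subset \mathcal{N}$ holds exactly when every $\lambda_i^*$ lies in $\mathcal{N}$, i.e.\@ when $Spec(X^*(G,S)) \subset \mathcal{N}$; specializing to $\mathcal{N}=\Z$ yields the integrality statement.

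For part $(b)$ I would assume $X^*(G,S)$ is integral, so every $\lambda_i^*$ is an integer, and then simply track parities. Statement $(ii)$ is immediate, as the eigenvalues $2\lambda_i^*$ and $0$ of $MX^*(G;S,S)$ are all even, while the eigenvalues $2\lambda_i^*+1$ and $-1$ of $MX^*(G;S,S\cup\{e\})$ are all odd. For $(i)$, I would note that $\lambda_i^*+1$ is even precisely when $\lambda_i^*$ is odd, and that $\lambda_i^*-1$ has the same parity as $\lambda_i^*+1$; hence the whole spectrum of $MX^*(G;S,\{e\})$ is even if and only if every $\lambda_i^*$ is odd, that is, if and only if $X^*(G,S)$ is odd, and dually for the odd case.

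The argument is entirely mechanical, so there is no genuine obstacle; the only points demanding a little care are the bookkeeping of the extra eigenvalues $0$ and $-1$ in part $(a)$ (one must confirm they lie in $\mathcal{N}$ for each of the three choices, so the equivalence is not broken in the reverse direction) and the simultaneous treatment of the Cayley and Cayley sum cases, which is automatic because Proposition \ref{prop: spec bicayleys} is already phrased with the $*$ notation.
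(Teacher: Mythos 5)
Your proposal is correct and follows exactly the paper's route: the paper's own proof is the one-line "Straightforward from Proposition \ref{prop: spec bicayleys}," and your argument simply spells out the details of that deduction (the affine maps $\lambda\mapsto\lambda\pm1$, $2\lambda$, $2\lambda+1$ preserving membership in $\Z$, $\R$, $\C$ in both directions, the extra eigenvalues $0$ and $-1$, and the parity bookkeeping for part $(b)$). Nothing further is needed.
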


\begin{proof}
Straightforward from Proposition \ref{prop: spec bicayleys}.
\end{proof}

\begin{rem} \label{rem: X int X*}
By Lemma \ref{lem: eigenvalues X*GS}, for $G$ abelian or, else, $G$ arbitrary but $S$ normal, we know that $X(G,S)$ is integral if and only if $X^+(G,S)$ is integral. Thus, in these cases, we only need to require that $X(G,S)$ (or $X^+(G,S)$) be integral in $(b)$ of Corollary~\ref{coro: integral}.    
\end{rem}

\begin{rem} \label{rem: ring of integers}
Notice that by the form of the eigenvalues of $X^*(G,S)$ obtained in Proposition \ref{prop: spec bicayleys}, item ($a$) of the corollary can be generalized. In fact, if $R$ is any subring of $\C$ then we have that  
$$ Spec(X^{*}(G,S))\subset R \qquad \Rightarrow \qquad Spec(MX^{*}(G;S,T)) \subset R.$$
In particular, this holds for $R$ equal to any number field $K$ (i.e.\@ $\Q \subset K \subset \C$) or to its ring of integers $\mathcal{O}_K$, such as $\Z[i]$ for example. 
\end{rem}

The second observation, related with the symmetry of the  spectrum, shows that there is a significative difference between the graphs with $T=\{e\}$ or $S$ and those with $T= S \cup \{e\}$.

\begin{coro} \label{coro: symmetric}
Let $G$ be a group and $S$ be a subset of $G$.
Then, the following holds:
\begin{enumerate}[$(a)$]
    \item $Spec(X^{*}(G,S))$ is symmetric if and only if $Spec(MX^{*}(G;S,S'))$ is symmetric with $S'=\{e\}$ or $S$.  \msk 

    \item If $Spec(X^{*}(G,S))$ is symmetric and $e\notin S$, then $Spec(MX^{*}(G;S,S \cup \{e\}))$ is not symmetric. 
\end{enumerate}
\end{coro}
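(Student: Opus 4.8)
The plan is to derive everything from the explicit spectra in Proposition \ref{prop: spec bicayleys}, rewriting the hypothesis ``$Spec$ is symmetric'' as an identity between multiplicities. Write $m^*(\lambda)$ for the multiplicity of $\lambda$ in $Spec(X^*(G,S)) = \{[\lambda_i^*]^{m_i^*}\}_{i \in I}$; thus $Spec(X^*(G,S))$ is symmetric precisely when $m^*(\lambda) = m^*(-\lambda)$ for all $\lambda$, and $m^*$ is supported on the finite set $\{\lambda_i^*\}_{i\in I}$. For each graph in $\mathcal{F}$, Proposition \ref{prop: spec bicayleys} expresses the multiplicity of an eigenvalue $\mu$ as a fixed combination of values of $m^*$, so the whole proof consists of comparing that expression at $\mu$ and at $-\mu$.

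For part $(a)$ with $S'=S$, the spectrum $Spec(MX^*(G;S,S)) = \{[2\lambda_i^*]^{m_i^*}\}_{i\in I} \cup \{[0]^{|G|}\}$ assigns to $\mu \ne 0$ the multiplicity $m^*(\mu/2)$ and to $0$ the multiplicity $m^*(0)+|G|$. Since $\mu \mapsto \mu/2$ commutes with negation and the extra block sits at the self-negating value $0$, the resulting spectrum is symmetric if and only if $m^*(\lambda) = m^*(-\lambda)$ for all $\lambda$, i.e. if and only if $Spec(X^*(G,S))$ is symmetric; this settles both implications at once. For $S'=\{e\}$ the multiplicity of $\mu$ in $Spec(MX^*(G;S,\{e\})) = \{[\lambda_i^*+1]^{m_i^*},[\lambda_i^*-1]^{m_i^*}\}_{i\in I}$ is $n(\mu) = m^*(\mu-1)+m^*(\mu+1)$. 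The forward implication is immediate: if $m^*$ is invariant under $\lambda \mapsto -\lambda$, then $n(-\mu) = m^*(-\mu-1)+m^*(-\mu+1) = m^*(\mu+1)+m^*(\mu-1) = n(\mu)$.

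The converse for $S'=\{e\}$ is the main obstacle, since one must invert the ``smearing'' $\lambda \mapsto \{\lambda\pm 1\}$. Assuming $n(\mu) = n(-\mu)$ for all $\mu$ and setting $h(\lambda) = m^*(\lambda) - m^*(-\lambda)$, the identity $m^*(\mu-1)+m^*(\mu+1) = m^*(-\mu-1)+m^*(-\mu+1)$ rearranges, by grouping each of $\mu-1$ and $\mu+1$ with its negative, into $h(\mu-1)+h(\mu+1)=0$, that is $h(\lambda+2) = -h(\lambda)$ for every $\lambda$. As $h$ has finite support, a value $h(\lambda_0) \ne 0$ would force $h(\lambda_0+2k) = (-1)^k h(\lambda_0) \ne 0$ for all $k \in \Z$, which is impossible; hence $h \equiv 0$, so $m^*$ is even and $Spec(X^*(G,S))$ is symmetric.

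For part $(b)$, put $s=|S|$, which we may assume satisfies $s \ge 1$ (for $S=\varnothing$ the graph reduces to $nP_2$ and coincides with the $T=\{e\}$ case), and set $\Gamma = MX^*(G;S,S\cup\{e\})$. By Proposition \ref{prop: directedness}$(c)$, $\Gamma$ is $(2s+1)$-regular, so $2s+1$ is an eigenvalue with eigenvector the all-ones vector, and every eigenvalue has modulus at most $2s+1$. A $(2s+1)$-regular graph has symmetric spectrum only if $-(2s+1)$ is an eigenvalue; but by Proposition \ref{prop: spec bicayleys} the eigenvalues of $\Gamma$ are the numbers $2\lambda_i^*+1$ together with $-1$, and $2\lambda_i^*+1 = -(2s+1)$ would require $\lambda_i^* = -(s+1)$, impossible since $X^*(G,S)$ is $s$-regular and hence $|\lambda_i^*| \le s$, while $-1 = -(2s+1)$ forces $s=0$. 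Thus $-(2s+1) \notin Spec(\Gamma)$ and the spectrum is not symmetric. The hypothesis that $Spec(X^*(G,S))$ be symmetric guarantees moreover that $-s \in Spec(X^*(G,S))$, so the smallest eigenvalue of $\Gamma$ equals $1-2s > -(2s+1)$, making the asymmetry explicit.
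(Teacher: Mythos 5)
Your proof is correct and follows the same route as the paper: everything is read off from the explicit spectra in Proposition \ref{prop: spec bicayleys}, and part $(b)$ is settled exactly as in the paper by observing that a $(2s+1)$-regular graph with symmetric spectrum would need $-(2s+1)$ as an eigenvalue, which the bound $|\lambda_i^*|\le s$ forbids. The one place where you go beyond the paper is the converse direction of $(a)$ for $S'=\{e\}$: the paper dismisses all of $(a)$ as ``clear from the expressions,'' but inverting the smearing $\lambda\mapsto\{\lambda\pm1\}$ is genuinely not immediate, and your argument --- passing to $h(\lambda)=m^*(\lambda)-m^*(-\lambda)$, deriving $h(\lambda+2)=-h(\lambda)$, and killing $h$ by finite support --- is a clean and correct way to close that gap. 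Your explicit exclusion of $S=\varnothing$ in part $(b)$ is also warranted (and more careful than the paper), since for $S=\varnothing$ the graph degenerates to $nP_2$, whose spectrum $\{[1]^n,[-1]^n\}$ \emph{is} symmetric, so the statement genuinely requires $|S|\ge 1$.
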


\begin{proof}
For ($a$), this is clear from the expressions for the spectra in Proposition~\ref{prop: spec bicayleys}.

For ($b$), notice that we take $e\notin S$ for if not, the graph is symmetric by $(a)$, since $MX^*(G,S,S\cup \{e\})=MX^*(G,S,S)$, in this case. Thus, assume that $Spec(X^{*}(G,S))$ is symmetric and suppose by contradiction that $Spec(MX^{*}(G;S,S \cup \{e\}))$ is symmetric. Clearly, $-2\lambda_i-1 \ne 2\lambda_i+1$ since $\lambda_i = -\frac 12$ is not possible
(because there are no eigenvalues in $\Q\smallsetminus \Z$ for any graph), hence for every $i$ there is a $j \ne i$ such that     $$ -2\lambda_i-1 = 2\lambda_j+1.$$ 
But this implies that $\lambda_j=-\lambda_i-1$.
In particular, if we apply this to $\lambda_i=\lambda_1=k$ with $k=|S|$, we have that $\lambda_j=-(k+1)$ is an eigenvalue. But this is absurd, since the eigenvalues of a $k$-regular graph all lie in $[-k,k]$.
\end{proof}

Next, we illustrate the results of the two previous corollaries.
\begin{exam} \label{exam: CayZ4}
Let $G=\Z_4=\{0,1,2,3\}$ and $S=\Z_4^* = \{1,3\} \subset \Z_4$. The unitary Cayley graph $\G=X(G,S)$ equals the 4-cycle $C_4$ which has spectrum given by
    $$ Spec(C_4) = \{ [2]^1, [0]^2, [-2]^1 \}.$$
Then, by Proposition \ref{prop: spec bicayleys}, we have that
\begin{align*}
Spec(\G_e) & = \{ [3]^1,[1]^3,[-1]^3,[-3]^1 \}, \\ 
Spec(\G_S) & = \{ [4]^1, [0]^6, [-4]^1\}, \\ 
Spec(\G_{S \cup \{e\}}) & = \{[5]^1, [1]^2, [-1]^4,[-3]^1\} ,    
\end{align*}   
where $\G_e = MX(G;S,\{e\})$, $\G_S = MX(G;S,S)$ and $\G_{S\cup \{e\}} = MX(G;S,S\cup \{e\})$.
We see that $\G$ and $\G_S$ are even graphs while $\G_e$ and $\G_{ S \cup \{e\}}$ are odd graphs. Also, $\G$, $\G_e$ and $\G_S$ are symmetric but $\G_{S \cup \{e\}}$ is not symmetric.
\hfill $\diamond$
\end{exam}

We now make some remarks on even and odd graphs.

\begin{rem} \label{rem: even odd examples}
($a$) 
While there are many examples of integral graphs in the literature, there are not so many examples having either all even eigenvalues or, else, all odd eigenvalues (hence, this is one of our main motivations for studying the graphs in the family $\mathcal{F}$). Namely, we have the following trivial examples of parametric families:

\begin{enumerate}[$(i)$]
    \item The complete graph $K_n$ has eigenvalues $\{n-1,-1\}$ and, hence, odd spectrum for $n$ even.

    \item The star $K_{1,n}$ has eigenvalues $\{\pm \sqrt n, 0\}$ and, hence, even spectrum for all even square $n=4m^2$.

    \item The complete bipartite graph $K_{n,n}$ has eigenvalues $\{\pm n, 0\}$ and, hence, even spectrum for even $n$.

    \item The hypercube $Q_{n}$ has eigenvalues $\{n-2k : 0\le k\le n\}$ and, hence, even (resp.\@ odd) spectrum for $n$ even (resp.\@ odd).
\end{enumerate}

\noindent ($b$) 
As a non-trivial example of even graphs we can mention the Csikvari's trees. In \cite{Csi}, for any set of $k$ positive integers $r_1 < r_2 < \cdots < r_k$,
Csikvari recursively constructed a family of integral trees 
    $$ T_k(r_1,\ldots, r_k) $$ 
having arbitrarily even diameter. As a consequence of Theorem 2.9 and the proof of Theorem 1.1 in \cite{Csi}, the eigenvalues of 
$ T_k(r_1,\ldots, r_k) $ 
are given by 
    $$\{ \pm\sqrt{r_1}, \ldots, \pm \sqrt{r_k}, 0\},$$ 
and hence, for $r_1, \ldots, r_k$ even squares, $T_k(r_1,\ldots, r_k)$ is an integral tree with all even eigenvalues. 
\end{rem}

\begin{rem}
It is also possible, by using some different graph operations, to obtain even or odd graphs starting from another integral or even/odd graphs. 
\begin{enumerate}[($i$)] 
\item The Kronecker product of $K_{2n}$'s is an odd graph (the Crown graph $Cr(4k)$ is a particular case).

\item The balanced blowup $G^{(2m)}$ of any integral graph $G$ is an even graph (in general, the the eigenvalues of the blow-up $G^{(t)}$ are multiples of $t$).

\item The cartesian product $\Box_{i=1}^m K_{2n}$ of complete graphs is even or odd depending whether $m$ is even or odd. 

\item If $G$ es $k$-regular of even order $n=2m$ with even/odd spectrum, then the complementary graph $G^c$ is $(n-k-1)$-regular of order $n$ with odd/even spectrum. 

\item If $G_1,\ldots,G_s$ are even graphs, then NEPS$(G_1,...,G_s; \mathcal{B})$ has even spectrum for every basis $\mathcal{B}$, that is, for every NEPS.
\end{enumerate}
\end{rem}

\begin{rem}
As can be seen from the examples in the previous remark, integral odd spectrum seems more difficult to find than integral even spectrum. 
Also, some of the graphs in the examples are Cayley graphs but some others not. In fact, it is clear that 
    $$K_n=X(\Z_n, \Z_n^\times), \quad K_{n,n}=X(\Z_n \times \Z_2, \Z_n \times \{1\}) \quad \text{and} \quad Q_n = X(\Z_2^n, \{e_i\}_{i=1}^n),$$ 
where $e_i$ is the canonical vector with a $1$ in position $i$. However, trees are not Cayley graphs because they are not regular (with the trivial exceptions of the single vertex graph $0_1$ and the 2-path $P_2$). 

Our construction using mirror di-Cayley (sum) graphs provides, for each pair $(G,S)$ with $e\notin S$ such that $X(G,S)$ is integral, one even graph, $MX^*(G;S,S)$, and one odd graph, $MX^*(G;S,S \cup \{e\})$. 
In addition, by ($d$) in Proposition \ref{prop: directedness}, they are Cayley (sum) graphs: 
\begin{gather*}
MX^*(G;S,S) = X^*(G\times \Z_2, S\times \Z_2), \\[.5mm] 
MX^*(G;S,S\cup \{e\}) = X^* (G \times \Z_2, S \times \Z_2 \cup \{e \times 1\}).
\end{gather*}
\end{rem}

\subsection*{Even and odd di-Cayley (sum) graphs}
We first give general conditions for the existence of even and odd mirror di-Cayley (sum) graphs. 
We will need some definitions and notation. 

Let $X$ be a set and $\mathcal{S}$ a family of subsets of $X$, that is $\mathcal{S} \subset \mathcal{P}(X)$. The \textit{Boolean algebra} $\mathcal{B}(\mathcal{S})$ of $\mathcal{S}$ is the set of all subsets of $X$ that can be obtained by taking a finite number of unions, intersections and complements of members of $\mathcal{S}$ in an arbitrary way. 
A subset $S$ of a group $G$ is called \textit{power-closed} (see \cite{GS}) or \textit{Eulerian} (see \cite{Guo}) if for every $x\in S$ and $y\in \langle x \rangle$ such that $\langle y \rangle = \langle x \rangle$ we have that $y\in S$.

\begin{prop} \label{prop: integral MDCGs}
Let $G$ be a finite group, $S$ a subset of $G$ with $e\notin S$ and let $T \in \mathcal{S}$ with $\mathcal{S} = \{\{e\}, S,S\cup \{e\}\}$. 
Then, we have:
\begin{enumerate}[$(a)$]
    \item If $G=\Z_n$ is cyclic, then $MX^*(G;S,T)$ is integral if and only if $S=\bigcup_{d\in D} S_n(d)$ where $S_d(n) = \{  a \in \Z_n: \gcd(a,n)=d\}$ and $D \subset \{ 1< d <n : d\mid n \}$. \sk 

    \item If $G$ is abelian, then $MX^*(G;S,T)$ is integral if and only if $S \in \mathcal{B}(\mathcal{S}(G))$, where $\mathcal{S}(G)$ denotes the set of subgroups of $G$. \sk 

    \item If $S$ is normal, then $MX^*(G;S,T)$ is integral if and only if $S$ is Eulerian.
\end{enumerate}
\end{prop}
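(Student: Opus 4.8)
The plan is to reduce everything to a statement about the underlying Cayley graph $X^*(G,S)$ and then invoke the classical characterizations of integral Cayley graphs in each of the three settings. The starting observation is Corollary~\ref{coro: integral}($a$): for every $T\in\mathcal{S}$ we have $Spec(MX^*(G;S,T))\subset\Z$ if and only if $Spec(X^*(G,S))\subset\Z$. Thus each of ($a$), ($b$), ($c$) is equivalent to characterizing integrality of $X^*(G,S)$. Since in ($a$) and ($b$) the group is abelian and in ($c$) the set $S$ is normal, Remark~\ref{rem: X int X*} lets me treat the Cayley graph and the Cayley sum graph simultaneously: $X(G,S)$ is integral if and only if $X^+(G,S)$ is, so it suffices to decide when $X(G,S)$ is integral.

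With this reduction in hand, each item is a known theorem. For ($c$) I would start from Lemma~\ref{lem: eigenvalues X*GS}($a$), which gives $\mathrm{Eig}(X(G,S))=\{\chi(S)/\chi(1):\chi\in\hat G\}$; the content is that all of these algebraic integers are rational (hence integral) precisely when $S$ is closed under the equivalence $x\sim y\iff\langle x\rangle=\langle y\rangle$, i.e. $S$ is Eulerian, which is the characterization in \cite{Guo} (see also \cite{GS}). Item ($b$) is the Bridges--Mena theorem: for $G$ abelian, $X(G,S)$ is integral if and only if $S$ lies in the Boolean algebra $\mathcal{B}(\mathcal{S}(G))$ generated by the subgroups of $G$ (see \cite{LZ}). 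Item ($a$) is the special case $G=\Z_n$, where this reduces to So's gcd-set criterion: $S$ must be a union of the classes $S_n(d)=\{a\in\Z_n:\gcd(a,n)=d\}$ over a set of divisors of $n$. The standing hypothesis $e=0\notin S$ then removes the divisor $d=n$, for which $S_n(n)=\{0\}=\{e\}$, yielding $D\subset\{1<d<n:d\mid n\}$.

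Since the heavy lifting is done by these cited results, the proof is mostly verification that they apply after the reduction, together with bookkeeping on how $e\notin S$ trims the admissible families (equivalently, on removing the need to include the trivial subgroup $\{e\}$ in ($b$)). The one place needing genuine care is the Cayley \emph{sum} graph in ($c$): the eigenvalue formula of Lemma~\ref{lem: eigenvalues X*GS}($b$) for non-abelian $G$ also asks that $S$ be symmetric, so rather than computing the spectrum of $X^+(G,S)$ directly I would transfer integrality from $X(G,S)$ via Remark~\ref{rem: X int X*}. I expect this transfer, and the precise matching of the Eulerian condition to the rationality of the character sums $\chi(S)/\chi(1)$, to be the only non-routine points.
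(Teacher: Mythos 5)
Your proposal is correct and follows essentially the same route as the paper: reduce to integrality of the underlying Cayley graph $X^*(G,S)$ via Corollary~\ref{coro: integral} and then invoke So's theorem for circulants, the Klotz--Sander (Bridges--Mena) characterization for abelian groups, and the Godsil--Spiga/Guo Eulerian criterion for normal connection sets. The additional care you take in transferring integrality to the Cayley sum graph via Remark~\ref{rem: X int X*} and in noting that $e\notin S$ excludes the divisor $d=n$ is sound and, if anything, slightly more explicit than the paper's one-line combination step.
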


\begin{proof}
For ($a$), Wasin So proved in \cite[Theorem 7.1]{So} that a circulant graph $X(\Z_n,S)$ is integral if and only if $S$ is the union of sets of the form $S_n(d)$ for a set of proper divisors $d\mid n$. \sk 

For ($b$), Klotz and Sander proved in \cite[Theorems 2 and 4]{KS}
that an abelian Cayley graph $X(G,S)$ is integral if and only if $S$ belongs to the Boolean algebra generated by the subgroups of $G$. \sk 

For ($c$), Godsil and Spiga proved in \cite[Theorem 1.1]{GS} (see also \cite[Theorem]{Guo} and \cite[Theorem 1]{KL}) that a normal Cayley graph $X(G,S)$ is integral if and only if $S$ is Eulerian. 

The statement follows by combining all these results with Corollary \ref{coro: integral}. 
\end{proof}

Let us now see some explicit examples of integral Cayley graphs defined over cyclic, abelian and non-abelian groups, respectively.

\begin{exam}[\textit{Integral circulant Cayley graphs: gcd-graphs}] \label{exam: circulant} \
The integral circulant graphs $X(\Z_n, S)$ from ($a$) in Proposition \ref{prop: integral MDCGs} are exactly the \textit{gcd-graphs} $X(\Z_n, D)$ studied by Klotz and Sander in \cite{KS07}, where $D$ is a set of proper divisors of $n$, that is $D \subseteq \{ 1\le d < n : d \mid n \}$. They showed that the eigenvalues of $X(\Z_n,D)$ are 
    $$ \lambda_r = \sum_{d \in D} c(r, \tfrac nd), \qquad (0\le r\le n-1) $$
where 
    $$ c(r, n)= \sum_{\substack{1\le j \le n-1 \\  (j,n)=1}} \omega^{rj} \in \Z $$
with $\omega = e^{\frac{2 \pi i}{n}}$ are the Ramanujan sums, which are known to be integral.  
\hfill $\diamond$
\end{exam} 

\begin{exam}[\textit{Integral abelian Cayley graphs: Sudoku graphs}] \label{exam: sudoku}  \
An $n$-Sudoku, with $n \ge 2$, is an arrangement of $n \times n$ square blocks each consisting of $n \times n$ cells, with each cell filled with a number from $\{ 1,2, \ldots,n^2 \}$ such that every block, row or column contains all of the numbers $1,2, \ldots,n^2$. 

The \textit{Sudoku graph} $Sud(n)$ is defined in \cite{KS10} and \cite{Sa} to have vertices the $n^4$ cells of an $n$-Sudoku such that distinct  vertices  (cells)  are  adjacent if and only if they are in the same block, row or column (they also defined the \textit{positional Sudoku graph} $SudP(n)$). 
It  can  be  shown  that both $Sud(n)$ and $SudP(n)$ are Cayley graphs defined over the abelian group $\Z_4^n$ (see  Section 4.2 in \cite{KS10} for  an  explicit  expression  of  the  corresponding  connection  sets). In \cite{Sa} it is  proved that $Sud(n)$ and $SudP(n)$ are integral, where the spectrum of $Sud(n)$ is given by 
	$$\left( \begin{matrix}
		3n^2-2n-1 & 2n^2-2n-1 & n^2-n-1 & n^2-2n-1 & -1 & -1-n \\ 
		1 & 2(n-1) & 2n(n-1) & (n-1)^2 & n^2(n-1)^2 & 2n(n-1)^2    
		\end{matrix} \right) $$     
and the spectrum of $SudP(n)$ is given by 
$$ \left( \begin{matrix}
4n(n-1) & 2n^2-3n & n(n-2) &0 &-n &-2n \\ 
1 &4(n-1) &4(n-1)^2 &(n-1)^4 &4(n-1)^3 & 2(n-1)^2    
\end{matrix} \right). $$
Here, the first rows of the arrays stand for the eigenvalues and the second rows for the multiplicities.

Also in \cite{KS10} the \textit{pandiagonal Latin square graphs} $PLSG(n)$ are defined. The authors showed that $PLSG(n)$ is a Cayley graph over $\Z_n^2$ with integral spectrum.
\hfill $\diamond$ \end{exam}

\begin{exam}[\textit{Integral non-abelian Cayley graphs: $\mathbb{S}_n$ and $\mathbb{A}_n$}] \label{exam: Sn}  \

\noindent ($a$) 
Let $\mathbb{S}_n$ the symmetric group in $n$ letters and consider the subgroup of all the transpositions 
    $$ T=\{(12), \ldots, (1n)\}.$$  
The star graph 
    $X(\mathbb{S}_n, T)$ 
has integral spectrum with eigenvalues 
    $$\{n, n-1, \ldots, 1,0,-1, \ldots, -n+1, -n\}$$ 
(see Theorem \cite{KraMo}, the multiplicities are computed in Corollary 2.1 in \cite{ChaFe}). \sk 

\noindent ($b$)
The derangement graph $X(\mathbb{S}_n, \mathcal{D}_n)$, where 
    $$ \mathcal{D}_n = \{ \sigma \in \mathbb{S}_n : \sigma(i) \ne i, \, 1\le i \le n\}$$ 
is the set of $n$-derangements, is integral with eigenvalues given by (see Theorem 3.2 in \cite{Re})
    $$ \eta_\lambda = \sum_{k=0}^n (-1)^k \frac{n!}{(n-k)!} \frac{f_{\lambda/k}}{f_\lambda}$$ 
where $\lambda$ runs over the sets of all integer partitions of $n$ and $f_\lambda$ (resp.\@ $f_{\lambda/\mu}$) is the number of standard (resp.\@ semistandard) Young tableaux of shape $\lambda$ (resp.\@ $\lambda/\mu$). \sk

\noindent ($c$) 
There are also families of integral Cayley graphs $X(\mathbb{A}_n, S)$ over the alternating group in $n$ letters $\mathbb{A}_n$.
For instance, for any $n \ge 4$ and $1\le k \le n$, the Cayley graph    
$$ X(\mathbb{A}_n, T_k) \quad \text{where} \quad T_k = \{ (k,i,j) \in \mathbb{A}_n\} $$ 
is integral with eigenvalues (see Theorem 3 in \cite{Guo}) 
    $$ \{ i^2 -n+1: 1\le i \le n\}.$$
Also, the Cayley graph 
    $$ X(\mathbb{A}_n, S) \quad \text{with} \quad S = \{ (1,2,i), (1,2,i)^{-1} : 3 \le i \le n\}$$ 
is integral for any $n\ge 3$ (see Corollary 5 in \cite{KL}).

See Section 3.3 in \cite{LZ} for more details on these examples.
\hfill $\diamond$ \end{exam}

\begin{exam}[\textit{Integral non-abelian Cayley graphs: $\mathbb{D}_n$, $S\mathbb{D}_n$}, and $Dic_n$] \label{exam: Dn}  \
The dihedrant graph is the Cayley graph $X(\mathbb{D}_n, S)$ defined over the dihedral group 
    $$ \mathbb{D}_n = \langle a,b : a^n= b^2=1, b^{-1}ab = a^{-1} \rangle $$ 
of order $2n$ with connection set $S=\mathbb{D}_n \smallsetminus \{e\}$. 
There are some results on the integrality of dihedrants and  Cayley graphs $X(S\mathbb{D}_n, S)$ over the semidihedral group    $$S \mathbb{D}_n = \langle a,b : a^{4n}= b^2=1, bab=a^{2n-1} \rangle = \langle a \rangle \cup b\langle a \rangle$$ 
of order $8n$, $n\ge 2$, for a normal connection set $S$ (see Theorems 51-58 in Section~3.4 in \cite{LZ}), some of them in terms of the sets $S_d(n)$ as in ($a$) of Proposition \ref{prop: integral MDCGs}. 
However, the spectra of the graphs are not given explicitly in these cases.  

On the other hand, the Cayley graphs $X(Dic_n, S)$ defined over the dicyclic group 
    $$ Dic_n = \langle a,b : a^{2n}=1, a^n=b^2, b^{-1}ab = a^{-1} \rangle$$
of order $4n \ge 8$, with connection set $S=\{ a^k : 1\le k \le 2n-1, k\ne n\} \cup \{ab, a^{n+1}b\}$ is integral for any odd $n$ with even spectrum given by (see Theorem 1.3 in \cite{AV}) 
  $$ Spec(X(Dic_n, S)) = \{ [2n]^1, [2n-4]^1, [0]^{3n-1}, [-4]^{n-1} \} .$$
See Theorems 59--60 and 62--64 in Section 4 of \cite{LZ} for more integrality results on Cayley graphs defined over dicyclic groups.
\hfill $\diamond$ \end{exam}

Putting together the results of the section we get an existence result for even and odd MDCGs.
\begin{thm} \label{teo: cayley even/odd}
We have the following.
\begin{enumerate}[$(a)$]
    \item There are even MDCGs and odd MDCGs defined either over cyclic, abelian or non-abelian groups. \sk 

    \item There are even Cayley (sum) graphs and odd Cayley (sum) graphs defined over non-cyclic abelian groups and also over non-abelian groups.
\end{enumerate}
In the non-abelian case, the group can be chosen to be  $\mathbb{S}_n$, $\mathbb{A}_n$, $\mathbb{D}_n$,  $S\mathbb{D}_n$ or $Dic_n$.
\end{thm}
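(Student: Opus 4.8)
The plan is to assemble this existence theorem entirely from the ingredients already developed, so the proof is essentially a bookkeeping argument that points to the right earlier results. The central engine is Corollary~\ref{coro: integral}: whenever $X^*(G,S)$ is integral with $e \notin S$, the graph $MX^*(G;S,S)$ is \emph{even} and $MX^*(G;S,S\cup\{e\})$ is \emph{odd}. Thus, to prove part~$(a)$, I only need to exhibit, for each of the three group types (cyclic, non-cyclic abelian, non-abelian), at least one pair $(G,S)$ with $e\notin S$ for which $X(G,S)$ (equivalently $X^+(G,S)$, by Remark~\ref{rem: X int X*}) is integral. These are precisely the examples catalogued in Examples~\ref{exam: circulant}--\ref{exam: Dn}.

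Concretely, I would go through the cases in order. For the cyclic case, take any gcd-graph $X(\Z_n, D)$ from Example~\ref{exam: circulant} with $D$ a set of proper divisors and $e \notin S$; by Proposition~\ref{prop: integral MDCGs}$(a)$ it is integral, so $MX^*(\Z_n;S,S)$ and $MX^*(\Z_n;S,S\cup\{e\})$ furnish an even and an odd MDCG over a cyclic group. For the non-cyclic abelian case I would invoke the Sudoku graphs $Sud(n)$ or $SudP(n)$ over $\Z_4^n$ from Example~\ref{exam: sudoku}, whose integrality is recorded there and which fall under Proposition~\ref{prop: integral MDCGs}$(b)$. For the non-abelian case I would cite the integral families of Examples~\ref{exam: Sn} and~\ref{exam: Dn}: the star graph $X(\mathbb{S}_n,T)$ and derangement graph over $\mathbb{S}_n$, the graphs $X(\mathbb{A}_n,T_k)$ over $\mathbb{A}_n$, the dihedrant and semidihedral examples over $\mathbb{D}_n$ and $S\mathbb{D}_n$, and the dicyclic example $X(Dic_n,S)$; in each the connection set avoids the identity and is normal (or symmetric), so Corollary~\ref{coro: integral} applies and again produces one even and one odd MDCG. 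This simultaneously establishes the final sentence identifying the admissible non-abelian groups.

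For part~$(b)$ I would use Proposition~\ref{prop: cayley structure}, which identifies each MDCG with an honest Cayley (sum) graph over $G\times\Z_2$: explicitly $MX^*(G;S,S)=X^*(G\times\Z_2,\, S\times\Z_2)$ and $MX^*(G;S,S\cup\{e\})=X^*(G\times\Z_2,\, (S\times\Z_2)\cup\{(e,1)\})$. Feeding the non-cyclic abelian and non-abelian examples of part~$(a)$ through this identification yields even and odd graphs that are genuinely Cayley (sum) graphs, as required; note that $G\times\Z_2$ remains non-cyclic abelian (resp.\ non-abelian) when $G$ is, so the group-type claims are preserved.

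The only genuine point requiring care, rather than mere citation, is the hypothesis bookkeeping: Corollary~\ref{coro: integral}$(b)(ii)$ needs $e\notin S$, and the character-based spectral formulas of Lemma~\ref{lem: eigenvalues X*GS} (used implicitly to know $X^+$ is integral whenever $X$ is) require $S$ to be normal, with $S$ also symmetric in the non-abelian sum case. I would therefore verify case by case that the chosen connection sets satisfy these conditions — each of the listed $S$ either avoids the identity by construction or can be replaced by $S\smallsetminus\{e\}$ without destroying integrality, and each is a union of conjugacy classes (hence normal) and closed under inversion. This verification is the main obstacle, though a mild one, since it amounts to checking the standing hypotheses of the quoted theorems rather than proving anything new.
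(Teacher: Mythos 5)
Your proposal is correct and follows essentially the same route as the paper: integral Cayley graphs over cyclic, abelian and non-abelian groups are drawn from Examples~\ref{exam: circulant}--\ref{exam: Dn}, Corollary~\ref{coro: integral} converts them into even and odd MDCGs, and Proposition~\ref{prop: cayley structure} realizes these as Cayley (sum) graphs over $G\times\Z_2$. Your extra remarks on hypothesis bookkeeping ($e\notin S$, normality/symmetry of $S$) are a welcome refinement but do not change the argument, which is the one the paper gives.
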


\begin{proof}
For ($a$), by Examples \ref{exam: circulant}--\ref{exam: Dn}, there are integral Cayley graphs $X(G,S)$ with $G$ either cyclic, abelian or non-abelian. 
By ($a$) in Corollary \ref{coro: integral}, the mirror di-Cayley (sum) graphs $MX^*(G;S,T)$ with $T=\{e\}, S$ or $S\cup \{e\}$ are all integral. Moreover, by ($b$) in Corollary \ref{coro: integral}, 
$MX^*(G;S,S)$ is an even MDCG and $MX^*(G;S,S \cup \{e\})$ is an odd MDCG.
Finally, for ($b$), $MX^*(G;S,S)$ and $MX^*(G;S,S \cup \{e\})$ are Cayley graphs  defined over $G \times \Z_2$, by Proposition \ref{prop: cayley structure}.
\end{proof}

\subsection*{Circulant even or odd graphs}
We finish the section with a natural inquiry.
Notice that by the previous theorem it is possible to obtain Cayley graphs having even and odd spectrum over $G\times \Z_2$, with $G$ cyclic, abelian or non-abelian. So, our construction does not allow us to produce even/odd circulant graphs (i.e., over cyclic groups), if they exist.

The first question is: \textit{are there even or odd circulant graphs?} 
The answer is yes. 
A quick search using Python for Cayley (sum) graphs with $G$ of order up to 16 shows that there are such even and odd circulant graphs for all the orders considered.
A more detailed search shows that there are some general families like: 
\begin{enumerate}[$(a)$]
\item For each $n \ge 1$, let $S_{odd} = \{1,3,5,\ldots, 4n-1\}$. The graphs 
	$$\G_{4n, \, ev}^* = X(\Z_{4n}, S_{odd}) \qquad \text{and} \qquad \G_{4n, \, odd}^* = X(\Z_{4n}, S_{odd} \cup \{0\})$$ 
have integral spectrum, respectively even and odd, given by
 	$$\{[2n]^1, [0]^{4n-2}, [-2n]^1\} \qquad \text{and} \qquad \{[2n+1]^1, [1]^{4n-2}, [-2n+1]^1\}.$$ 
The graphs $\G_{4n, \, ev}$ coincide with its sum version $\G_{4n, \, ev}^+$ and are loopless, connected, bipartite and strongly regular graphs (SRG for short).
The graphs $\G_{4n, \, odd}$ are looped and connected while $\G_{4n, \, odd}^+$ are loopless and connected.

\msk

\item For each $n \ge 3$, let $S = \Z_{4n} \smallsetminus \{0,n,2n,3n\}$. The graphs 
	$$\tilde \G_{4n, \, ev}^* = X(\Z_{4n}, S) \qquad \text{and} \qquad \tilde \G_{4n, \, odd}^* = X(\Z_{4n}, S \cup \{0\})$$ 
have integral spectrum, respectively even and odd, given by
	$$ \{ [4n-4]^1, [0]^{3n}, [-4]^{n-1}\} \qquad \text{and} \qquad \{[4n-3]^1, [1]^{3n}, [-3]^{n-1}\}.$$ 
The graphs $\tilde \G_{4n, \, ev}$ coincide with its sum version $\tilde \G_{4n, \, ev}^+$ and are loopless, connected, bipartite and SRG. The graphs $\tilde \G_{4n, \, odd}$ are looped and connected while $\tilde \G_{4n, \, odd}^+$ are loopless and connected.	
\end{enumerate}

However, these are well-known families of strongly regular graphs with parameters $srg(n,k,e,d)$. For instance, the graphs $\G_{4n,ev}^*$ are strongly regular with parameters
	\[ srg(4n,\,2n,\,0,\,2n), \]
and are isomorphic to the complete bipartite graph $K_{2n,2n}$ (see $(iii)$ of Remark \ref{rem: even odd examples}), while 
the graphs $\tilde{\G}_{4n,ev}^*$ are strongly regular with parameters
	\[ srg(4n,\,4n-4,\,4n-8,\,4n-4), \]
and are isomorphic to the complete $n$-partite graph $K_{4,4,\ldots,4}$ ($n$ times), also denoted $K_{4 \times n}$, where each part has exactly 4 vertices 
(alternatively, the complement of $n$ disjoint copies of $C_4$).

Furthermore, in 1981, Bridges and Mena (\cite{BM}, see also \cite{Ma}) characterized all circulant graphs which are SRG.
In fact, they showed that besides the trivial graphs (i.e., the complete graph $K_n=Cay(\Z_n,\Z_n \smallsetminus \{0\})$ and the empty graph $0_n=Cay(\Z_n, \varnothing)$), there are only three families of circulant strongly regular graphs:
the imprimitive families (either the graph or the complement are disconnected) and the classical Paley graphs of prime order $p$ with $p\equiv 1 \pmod 4$.

The Paley graph $P(p)=Cay(\Z_p, R_p)$, with $R_p=\{x^2 : x\in \Z_p\}$, has parameters $srg(p,\frac{p-1}2,\frac{p-5}4, \frac{p-1}4)$ are not integral since they have spectrum $\{ \frac{p-1}2, \frac{-1-\sqrt p}2, \frac{-1+\sqrt p}2\}$. 

The imprimitive families are the complete multipartite graphs $K_{r \times m}$ and their non-connected complements $mK_r$.
The complete multipartite graphs $K_{r, r, \dots, r}$ (with $m$ parts) has parameters 
	$$ srg(mr, (m-1)r, (m-2)r, (m-1)r)$$ 
and spectrum 
	$$ \{[r(m-1)]^1, [0]^{m(r-1)}, [-r]^{m-1}\}. $$
On the other hand, the complement of $K_{ \times m}$ are the disjoint union of complete graphs $m K_r$ with parameters
$srg(mr, r-1, r-2, 0)$.

Thus, the only \textit{integral connected circulant strongly regular graphs} are given by complete multipartite graphs $K_{r \times m}$, which for certain parameters are (or can be adapted to be) even or odd. In fact, the families that we found in ($a$) and ($b$) above are of this form: if $m=2$ and $r=2n$ we get $K_{2 \times 2n}=K_{2n,2n}$ which are the graphs in ($a$) and if $r=4$ we get $K_{4 \times m} = K_{4,4,\ldots,4}$ ($m$ times) which are the graphs in ($b$).

Hence, all these comments lead us to the following:
\begin{openquest}
\textit{Is there a general way to construct (non-trivial, connected) circulant Cayley graphs $X(\Z_n, S)$ and $X^+(\Z_n, S)$ having even and odd spectrum, which are not strongly regular?}
\end{openquest}

\section{Even/odd Cayley graphs over finite commutative rings} \label{sec: integral GR}

Here we give some general examples of integral MDCGs in $\mathcal{F}$ having either even or odd  spectrum, constructed from Cayley graphs over finite commutative rings. We will give two different constructions: one using $k$-th powers in finite fields, and the other using units in finite commutative rings.

\subsection{Even/odd MDCGs from GP-graphs over finite fields}
Let $\ff_q$ be a finite field of $q$ elements and let $k$ be a positive integer such that $k \mid q-1$. Taking $G=\ff_q$ and $P_k = \{x^k : x \in \ff_q^*\}$ we have the \textit{generalized Paley graph} (GP-graphs for short) $$\G(k,q) = X(G,P_k)$$ and the corresponding GP sum graph 
	$\G^+(k,q)$ (for general information on these graphs see the works of Nguyen and coauthors and Podestá-Videla). 
	
One can consider mirror di-Cayley graphs over $\ff_q$ in the family $\mathcal{F}$ generalizing in some way the GP-graphs. Namely, for $k \mid q-1$, we have the three mirror di-Cayley (sum) graphs 
    $$ MX^*(\ff_q; P_k, \{0\}), \qquad MX^*(\ff_q; P_k, P_k), 
    \qquad MX^*(\ff_q; P_k, P_k  \cup \{0\}).$$	
The criterion for integrality (and even/odd parity) of these graphs is very simple.
\begin{prop}
Let $q=p^m$ for some $m \in \N$ with $p$ prime and let $k \in \N$ such that $k\mid q-1$. Then, we have that 
$MX^*(\ff_q; P_k, \{0\})$ is integral, $MX^*(\ff_q; P_k, P_k)$ is even and $MX^*(\ff_q; P_k, P_k  \cup \{0\})$ is odd if and only if $k\mid \frac{q-1}{p-1}$.
\end{prop}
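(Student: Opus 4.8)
The plan is to reduce the whole statement to the integrality of the underlying generalized Paley graph $\G(k,q)=X(\ff_q,P_k)$ and then to pin down that integrality using results already established in the paper. Since $\ff_q$ is an abelian (additive) group and $0\notin P_k$, Corollary \ref{coro: integral} does most of the work: part $(a)$ says the three graphs $MX^*(\ff_q;P_k,T)$ are integral if and only if $X^*(\ff_q,P_k)$ is integral, and by Remark \ref{rem: X int X*} it suffices to test $X(\ff_q,P_k)=\G(k,q)$; while part $(b)(ii)$ shows that once the base is integral, $MX^*(\ff_q;P_k,P_k)$ is automatically even and $MX^*(\ff_q;P_k,P_k\cup\{0\})$ automatically odd (here $e=0\notin P_k$ is what guarantees the two graphs differ and produces the even/odd dichotomy). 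Thus the even and odd claims impose no extra hypotheses, and everything collapses to proving that $\G(k,q)$ is integral if and only if $k\mid\frac{q-1}{p-1}$.

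For this equivalence I would invoke Proposition \ref{prop: integral MDCGs}$(b)$ with $G=(\ff_q,+)$: the graph $X(\ff_q,P_k)$ is integral exactly when $P_k$ belongs to the Boolean algebra $\mathcal{B}(\mathcal{S}(\ff_q))$ generated by the additive subgroups of $\ff_q$, and these subgroups are precisely the $\ff_p$-subspaces. So the combinatorial heart of the matter is the claim that $P_k\in\mathcal{B}(\mathcal{S}(\ff_q))$ if and only if $\ff_p^*\subseteq P_k$.

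The crux is a symmetry observation. For $\lambda\in\ff_p^*$, multiplication $\mu_\lambda\colon x\mapsto\lambda x$ is an additive bijection of $\ff_q$ that fixes every $\ff_p$-subspace setwise, hence, because the Boolean operations commute with a bijection, it fixes every member of $\mathcal{B}(\mathcal{S}(\ff_q))$ setwise. Therefore, if $P_k$ lies in the Boolean algebra then $\lambda P_k=P_k$ for all $\lambda\in\ff_p^*$, and since $1\in P_k$ this forces $\ff_p^*\subseteq P_k$. Conversely, if $\ff_p^*\subseteq P_k$ then $P_k$ is closed under $\ff_p^*$-multiplication, so it is a union of punctured lines $L\smallsetminus\{0\}$ through the origin; each such $L$ is a one-dimensional $\ff_p$-subspace, so $L\smallsetminus\{0\}\in\mathcal{B}(\mathcal{S}(\ff_q))$ and hence $P_k\in\mathcal{B}(\mathcal{S}(\ff_q))$. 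Finally I would translate the group-theoretic condition into divisibility: in the cyclic group $\ff_q^*$ the subgroup $\ff_p^*$ has order $p-1$ and the index-$k$ subgroup $P_k$ has order $\frac{q-1}{k}$, so $\ff_p^*\subseteq P_k$ iff $(p-1)\mid\frac{q-1}{k}$ iff $k\mid\frac{q-1}{p-1}$ (an integer, since $p-1\mid q-1$), which completes the argument.

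I expect the forward direction of the Boolean-algebra claim, namely the $\ff_p^*$-invariance of every set in $\mathcal{B}(\mathcal{S}(\ff_q))$, to be the one genuinely substantive step; the rest is bookkeeping with divisibilities and appeals to Corollary \ref{coro: integral}. A more computational alternative would write the eigenvalues of $\G(k,q)$ as Gaussian periods in $\Q(\zeta_p)$ and use that an algebraic integer is rational iff it is fixed by $\mathrm{Gal}(\Q(\zeta_p)/\Q)$; there the delicate point becomes the ``only if'' direction, where one must exhibit a genuinely irrational period when $\ff_p^*\not\subseteq P_k$ and rule out accidental numerical coincidences among distinct period sums. I would favour the Boolean-algebra route precisely because it sidesteps that analytic subtlety.
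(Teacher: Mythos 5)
Your reduction to the integrality of $X(\ff_q,P_k)$ via Corollary \ref{coro: integral} and Remark \ref{rem: X int X*} is exactly what the paper does, but from there the two arguments diverge: the paper simply cites Theorem~4.1 of \cite{PV2} for the equivalence ``$\G(k,q)$ integral $\Leftrightarrow k\mid\frac{q-1}{p-1}$'' and stops, whereas you re-prove that equivalence inside the paper's own framework using the Klotz--Sander Boolean-algebra criterion quoted in Proposition \ref{prop: integral MDCGs}$(b)$. Your argument is correct: the subgroups of $(\ff_q,+)$ are precisely the $\ff_p$-subspaces, the maps $\mu_\lambda$ for $\lambda\in\ff_p^*$ fix each of them setwise and hence fix every member of $\mathcal{B}(\mathcal{S}(\ff_q))$ setwise (the family of $\mu_\lambda$-invariant sets is itself a Boolean algebra containing the generators), which with $1\in P_k$ gives $\ff_p^*\subseteq P_k$; the converse writes $P_k$ as a union of punctured lines; and the subgroup-containment-to-divisibility translation in the cyclic group $\ff_q^*$ is routine. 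What your route buys is self-containedness and a structural explanation of the divisibility condition ($P_k$ must absorb the prime field's scalars); what it costs is a small amount of care about directedness --- Klotz--Sander is stated for symmetric connection sets, and $P_k$ need not be symmetric a priori --- though this is harmless: when $k\mid\frac{q-1}{p-1}$ one gets $-1\in\ff_p^*\subseteq P_k$ so $P_k$ is symmetric, and in the converse direction integrality forces $\chi(P_k)=\overline{\chi(P_k)}=\chi(-P_k)$ for all characters, hence $P_k=-P_k$ by Fourier inversion before the criterion is applied. The citation route avoids this entirely because \cite{PV2} treats directed GP-graphs via Gaussian periods. Either way the statement stands.
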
	

\begin{proof}
It is known, by Theorem~4.1 in \cite{PV2}, that a GP-graph $\G(k,q)$ is integral if and only $k\mid \frac{q-1}{p-1}$ . The result thus follows by Corollary \ref{coro: integral}. 
\end{proof}

Some few families of GP-graphs are known to be integral. To our best knowledge, we have the following ones with known spectrum.

\begin{exam}[\textit{Families of integral GP-graphs with known spectrum}.] \label{exam: GP-enteros}

\

\noindent $(a)$
\textit{$\G(k,q)$ with small $k$, $1\le k \le 4$}. The graph $\G(1,q)$ is the complete graph $K_q$ which is integral. The graph 
$\G(2,q)$ is the classic (undirected) Paley graphs $P(q)$ for $q\equiv 1 \pmod 4$ and the directed Paley graph $\vec{P}(q)$ for $q \equiv 3 \pmod 4$. The spectrum of these graphs are well-known (see for instance Examples 2.3 and 2.4 in \cite{PV2}). The spectrum of $\G(3,q)$ and $\G(4,q)$ are covered in Theorems 3.1 and 3.2 of  \cite{PV2}, respectively, and are more involved. In Example 4.2 of \cite{PV2} one can find for which values of $k$ and $q$ these graphs are integral.

\sk 

\noindent $(b)$
\textit{Hamming GP-graphs}. 
A Hamming graph $H(b, q)$ is a graph with vertex set $V = K^b$ where $K$ is any set of size $q$ (typically $\ff_q$ in applications), and where two $b$-tuples form an edge if and only if they differ in exactly one coordinate. Notice that 
	$$ H(b, q) = \square^b K_q $$ 
and hence, Hamming
graphs are integral with spectrum 
\begin{equation} \label{eq: spec Hamming}
	Spec(H(b, q)) = \{ [\ell q - b]^{\binom{b}{\ell} (q-1)^{b-\ell}} : 0\le \ell\le b\}.
\end{equation}  
Connected GP-graphs $\G(k, q)$ which are Hamming graphs were classified by Lim and Praeger in \cite{LP}. In this case $k =\frac{p^{bm}-1}{b(p^m-1)}$ where $b | \frac{p^{bm}-1}{p^m-1}$. 
Hence, 
\begin{equation} \label{eq: Hamming}
	\G \Big( \tfrac{p^{bm}-1}{b(p^m-1)},p^{bm} \Big) =    H(b,p^m).    
\end{equation}

\noindent $(c)$
\textit{Semiprimitive GP-graphs}. 
A GP-graph $\G(k,q)$ is semiprimitive if $k=2$ and $q\equiv 1 \pmod 4$ or else 
$k \ge 3$ and 
$$k\mid p^{t}+1 \qquad  \text{for some $t \mid \tfrac m2$ with $t\ne \tfrac m2$}.$$ 
In Proposition~5.3 of \cite{PV2} it is proved that every semiprimitive GP-graph is integral. In Theorem 5.4 of \cite{PV2} the authors explicitly give the spectrum of $\G(k,q)$ and $\G^+(k,q)$. 

In fact, let $(k, q)$ be a semiprimitive pair with $q=p^m$, $m$ even, and $n=\frac{q-1}k$.  
Then, the spectra of $\G=\G(k,q)$ and $\G^+=\G^+(k,q)$ 
are integral. We have 
\begin{equation} \label{eq: spec semip}
	\mathrm{Spec}(\G) 		 = \big\{ [n]^1, [\lambda_1]^n, [\lambda_2]^{(k-1)n} \big\}, 
\end{equation}
with 
$$	\lambda_1 = \frac{(-1)^{\frac{m}{2t}+1}(k-1) p^{\frac m2}-1}{k}
\qquad  \text{and} \qquad  
\lambda_2 = -\frac{(-1)^{\frac{m}{2t}+1} p^{\frac m2}+1}{k}, $$ 
where $t$ is the least integer $j$ such that $k \mid p^j+1$ (hence $\frac{m}{2t} \ge 1$). 
Furthermore, 
$\mathrm{Spec}(\G^+) = \mathrm{Spec}(\G)$ if $q$ is even and $$\mathrm{Spec}(\G^+) = \big\{ [n]^1, [\pm \lambda_1]^{\frac n2}, [\pm \lambda_2]^{\frac{(k-1)n}2} \big\}$$ if $q$ is odd. 
\hfill $\diamond$	
\end{exam}

\begin{rem}
Using Proposition \ref{prop: spec bicayleys}, it is easy to obtain explicit expressions for the spectra of the mirror di-Cayley (sum) graphs $MX^*(\ff_q; P_k,T)$ with $T=\{0\}, P_k$ or $P_k\cup \{0\}$ associated to the GP-graphs with $k$ small ($1\le k\le 4$), for Hamming GP-graphs and for semiprimitive GP-graphs, from the spectrum of these GP-graphs in items $(a)$--$(c)$ of the above example.
We leave the details of this to the interested reader.
\end{rem}

Very recently, using the Euler function $\varphi$ and cyclotomic polynomials $\Phi_d(x)$, some infinite families of integral GP-graphs were obtained in \cite{PV3}, although the spectrum is not explicitly known (of course, it is given by Gaussian periods in general, see Theorem 2.1 in \cite{PV2}). We give here the main result borrowed from \cite{PV3}; many more concrete families and examples can be seen in Section 5 of \cite{PV3}. 

\goodbreak 

\begin{prop}{$($\cite[Theorem 5.16]{PV3}$)$} \label{thm: general integral GP-graphs}
	Let $p$ be a prime and $k \in \N$. We have the general infinite families of integral GP-graphs:
	\begin{enumerate}[$(a)$]
		\item $\{\G(k \frac{q^{ta}-1}{q^t-1}, q^{at})\}_{a,t\in \mathbb{N}}$ where $q=p^k$, provided that $k\mid p-1$. \msk
		
		\item $\{\G(k \frac{q^{ta}-1}{q^t-1}, q^{at})\}_{a,t\in \mathbb{N}}$ where $q=p^2$, provided that $k\mid p+1$. \msk
		
		\item $\{\G(k \frac{q^{ta}-1}{q^t-1}, q^{at})\}_{a, t\in \mathbb{N}}$ 
		where $q=p^{\varphi(k)}$, provided that $\gcd(k,p(p-1))=1$. \msk 
		
		\item $\{\G(\Phi_d(p) \frac{q^{ta}-1}{q^t-1}, q^{at}) \}_{a,t\in \N}$ with $q=p^d$ and $d>1$. 
	\end{enumerate}
\end{prop}

We now give, in two theorems, general explicit families of even and odd (mirror di-Cayley (sum)) graphs defined over finite fields.

\begin{thm} \label{thm: even/odd over Fqs}
Let $\ff_q$ be a finite field of $q=p^m$ elements with $p$ prime. Let $k$ be a divisor of $q-1$ and take $P_k=\{ x^k: x\in \ff_q^*\}$. Then we have:
\begin{enumerate}[$(a)$]
    \item The graph $MX^*(\ff_q; \ff_q^*, \ff_q^*)$ is even and the graph $MX^*(\ff_q; \ff_q^*, \ff_q)$ is odd. \msk 

    \item The graphs $MX^*(\ff_q; P_2, P_2)$ and $MX^*(\ff_q; P_2, P_2 \cup \{0\})$ are respectively even and odd if and only if $q\equiv 1 \pmod 4$. \msk 
    
    \item The graphs $MX^*(\ff_q; P_3, P_3)$ and $MX^*(\ff_q; P_3, P_3 \cup \{0\})$ are respectively even and odd if and only if $p\equiv 1 \!\pmod 3$ with $3\mid m$ or $p\equiv 2 \pmod 3$ and $m$ even.  \msk 
    
    \item The graphs $MX^*(\ff_q; P_4, P_4)$ and $MX^*(\ff_q; P_4, P_4 \cup \{0\})$ are respectively even and odd if and only if $p\equiv 1 \pmod 4$ with $m\equiv 0 \pmod 4$ or $p\equiv 3 \pmod 4$. \msk 

    \item The graph $MX^*(\ff_{p^{bt}}; P_k, P_k)$ is even and the graph $MX^*(\ff_{p^{bt}}; P_k, P_k \cup \{0\})$ is odd for any $k=(p^{bt}-1)/b(p^t-1)$ with $b$ a divisor of $(p^{bt}-1)/(p^t-1)$. \msk 

    \item The graph $MX^*(\ff_{q}; P_k, P_k)$ is even and the graph $MX^*(\ff_{p}; P_k, P_k \cup \{0\})$ is odd for any semiprimitive pair of integers $(k,q)$.
\end{enumerate}
\end{thm}

\begin{proof}
By part ($ii$) in ($b$) of Corollary \ref{coro: integral} and Remark \ref{rem: X int X*}, it is enough to show that the Cayley graph $X(\ff_q, P_k)$ associated to every MDCG in each item is integral. 

Item ($a$) is clear.
That $X(\ff_q, P_k)$  is integral for $k=2,3,4$ if and only if $k$ and $q$ are as stated in items ($b$)--($d$) follows by Example 4.2 in \cite{PV2}. That the Cayley graphs $X(\ff_q, P_k)$ in $(e)$ and $(d)$ are integral follows from \eqref{eq: spec Hamming}, \eqref{eq: Hamming},  and \eqref{eq: spec semip} in Example~\ref{exam: GP-enteros}.
\end{proof}

We now give infinite families of integral (mirror di-Cayley (sum)) graphs.

\begin{thm} \label{thm: infinite families of even/odd graphs over Fqs}
Let $q=p^m$ with $p$ a prime and $k,a,t \in \N$ and put $k'=k \frac{q^{at}-1}{q^t-1}$. 
In the previous notations, we have the families 
$$ \{ MX^*(\ff_q; P_{k'},P_{k'}) \}_{a,t \in \N}  \qquad \text{and} \qquad \{ MX^*(\ff_q; P_{k'},P_{k'}\cup \{0\}) \}_{a,t \in \N}$$
of even and odd MDCGs respectively, in the following situations:
\begin{enumerate}[$(a)$]
\item $m=k$ and $k\mid p-1$ or $m=2$ and $k\mid p+1$. \msk 

\item $m=\varphi(k)$ and $\gcd(k,p(p-1))=1$. \msk 

\item $m=d>1$ and $k=\Phi_d(p)$.  
\end{enumerate}
\end{thm}

\begin{proof}
This is a direct consequence of Proposition \ref{thm: general integral GP-graphs} together 
with Corollary \ref{coro: integral}.
\end{proof}

\subsection{Even/odd MDCGs from unitary Cayley graphs over rings}
 Here we show that any MDCG in $\mathcal{F}$ defined over a finite commutative ring $R$ with connection set its groups of units $R^*$ is integral (hence $0 \notin R^*$). 
That is, 
    $$ Spec(MX^*(R;R^*, T)) \subset \Z$$ 
for any $T \in  \mathcal{R}$, where $\mathcal{R}=\{ \{0\},R^*, R^* \cup \{0\}\}$.

\begin{prop} \label{prop: GRR integral}
Let $R$ be a finite commutative ring with identity and let $R^*$ be its group of units. Then, the mirror di-Cayley (sum) graphs 
\begin{align*}
G_{R,0}^*         & := MX^*(R;R^*, \{0\}), \\ 
G_{R,R^*}^*       & := MX^*(R;R^*, R^*), \\ 
G_{R,R^*\cup \{0\}}^* & := MX^*(R;R^*, R^* \cup \{0\}),
\end{align*}
are integral. Moreover, $G_{R,R^*}^*$ is even and $G_{R,R^* \cup \{0\}}^*$ is odd.
\end{prop}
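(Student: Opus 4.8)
The plan is to reduce the whole statement to the already-established machinery of Corollary \ref{coro: integral}, so that the only genuine input is the classical integrality of the unitary Cayley graph. The key preliminary observation is that we work over the additive group $(R,+)$, which is abelian, and that $0 \notin R^*$ (since $0$ is never a unit). Thus $R^*$ is automatically a normal subset of $(R,+)$ not containing the identity $e=0$, which places us exactly in the hypotheses needed for the three cases $T \in \{\{0\}, R^*, R^* \cup \{0\}\}$.

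First I would establish that $X^*(R,R^*)$ is integral. By Lemma \ref{lem: eigenvalues X*GS}$(a)$, and since $\chi(1)=1$ for abelian groups, the eigenvalues of $X(R,R^*)$ are the additive character sums $\chi(R^*) = \sum_{u \in R^*}\chi(u)$ as $\chi$ ranges over the characters of $(R,+)$. Using the Artin decomposition $R = R_1 \times \cdots \times R_s$ into local rings, each character factors as $\chi = \chi_1 \otimes \cdots \otimes \chi_s$ and $R^* = R_1^* \times \cdots \times R_s^*$, so $\chi(R^*) = \prod_{i} \chi_i(R_i^*)$. Each local factor is an integer: writing $R_i^* = R_i \smallsetminus \mathfrak{m}_i$ for the maximal ideal $\mathfrak{m}_i$, orthogonality of characters gives $\chi_i(R_i^*) \in \{ |R_i^*|, -|\mathfrak{m}_i|, 0\}$, depending on whether $\chi_i$ and its restriction to $\mathfrak{m}_i$ are trivial (for $R=\Z_n$ these are precisely the Ramanujan sums). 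Hence every $\chi(R^*) \in \Z$ and $X(R,R^*)$ is integral, and since $(R,+)$ is abelian, Remark \ref{rem: X int X*} gives that $X^+(R,R^*)$ is integral as well.

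Next I would invoke Corollary \ref{coro: integral}$(a)$ with $G=(R,+)$, $S=R^*$, $e=0$ and $T \in \{\{0\}, R^*, R^* \cup \{0\}\}$: because $X^*(R,R^*)$ is integral, each of the three graphs $G_{R,0}^*$, $G_{R,R^*}^*$ and $G_{R,R^*\cup\{0\}}^*$ is integral. For the parity assertions I would apply Corollary \ref{coro: integral}$(b)(ii)$ directly, which states that whenever $X^*(G,S)$ is integral the graph $MX^*(G;S,S)$ is even and $MX^*(G;S,S\cup\{e\})$ is odd. With $S=R^*$ and $e=0 \notin R^*$ this yields exactly that $G_{R,R^*}^*$ is even and $G_{R,R^*\cup\{0\}}^*$ is odd, completing the argument.

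The only nontrivial ingredient is the integrality of $X(R,R^*)$; everything else is a formal consequence of results already proved. The main obstacle is therefore justifying that the sums $\chi(R^*)$ are integers, which is where the local structure of $R$ enters through the factorization above. If one prefers to avoid re-deriving this, it suffices to cite the known integrality of unitary Cayley graphs over finite commutative rings, after which the conclusion is immediate from Corollary \ref{coro: integral}.
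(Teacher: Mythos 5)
Your proof is correct and follows essentially the same route as the paper: establish integrality of $X(R,R^*)$, transfer it to $X^+(R,R^*)$ via Lemma \ref{lem: eigenvalues X*GS} (the paper cites \cite{KAYS} for the first step rather than re-deriving the character sums over the Artin decomposition as you do), and then conclude by Corollary \ref{coro: integral}. Your optional self-contained computation of $\chi_i(R_i^*) \in \{|R_i^*|, -|\mathfrak{m}_i|, 0\}$ is a correct but inessential addition.
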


\begin{proof}
It is known that $G_R = X(R,R^*)$   
has integral spectrum (see \cite{ABetal}, \cite{KAYS}) and, hence, $G_R^+=X^+(R,R^*)$ also has integral spectrum by Lemma \ref{lem: eigenvalues X*GS}. Thus, the graphs $G_{R,0}^*$, 
$G_{R,R^*}^*$ 
and $G_{R,R^*\cup \{0\}}^*$ 
have integral spectrum, with $G_{R,R^*}^*$ even and $G_{R,R^*\cup \{0\}}^*$ odd, by Corollary \ref{coro: integral}.   
\end{proof}

Notice that, for finite local rings $R$, the graph $G_R$ is always undirected and loopless. Also, if
$R$ is of even size we have $G_R=G_R^+$, see Lemma~3.1 in \cite{PV}. 
For finite local rings $R$ of odd size $G_R^+$ has loops and $G_R \ne G_R^+$. It is easy to give the spectra of $G^*_{R,0}$, $G^*_{R,R^*}$ and $G^*_{R,R^* \cup \{0\}}$ in this case, explicitly. 
\begin{coro} \label{coro: SpecGRR*}
Let $(R,\frak{m})$ be a finite local ring of odd size $|R|=r$, with maximal ideal $ \frak{m}$ of size $|\frak{m}|=m$ (if $m=1$, $R$ is a field). 
Then, the integral spectrum of $G^*_{R,0}$ (which is neither even nor odd) is  given by
\begin{align*}
	Spec(G_{R,0}) & = \begin{cases}
			\big\{ [r-m \pm 1]^1, [\pm 1]^{\frac rm(m-1)},  [-m \pm 1]^{\frac{r-m}{m}} \big\} & \qquad \qquad \qquad \text{if $m \ge 2$,} \\[1mm]
			\big\{ [r]^1, [r-2]^1, [0]^{r-1}, [-2]^{r-1} \big\} & \qquad \qquad \qquad \text{if $m =1$,} \end{cases} 
		\\[2mm]
		Spec(G_{R,0}^+) & = \begin{cases}
			\big\{ [r-m \pm 1]^1, [m\pm 1]^{\frac{r-m}{2m}}, [\pm 1]^{\frac rm(m-1)},  [-m \pm 1]^{\frac{r-m}{2m}} \big\} & \quad \text{if $m \ge 2$,} \\[1mm]
			\big\{ [r]^1, [r-2]^1, [0]^{r-1}, [\pm 2]^{\frac{r-1}2} \big\} & \quad \text{if $m =1$,}
		\end{cases}    
	\end{align*}
the even spectrum of $G^*_{R,R^*}$ is given by  
\begin{align*}
		Spec(G_{R,R^*}) & = \begin{cases}
			\big\{ [2(r-m)]^1, [0]^{2r-\frac rm},  [-2m]^{\frac{r-m}{m}} \big\} & \qquad \text{if $m \ge 2$,} \\[1.5mm]
			\big\{ [2(r-1)]^1, [-2]^{r-1}, [0]^{r} \big\} & \qquad \text{if $m =1$.} \end{cases} 
		\\[2mm]
		Spec(G_{R,R^*}^+) & = \begin{cases}
			\big\{ [2(r-m)]^1, [0]^{2r-\frac rm},  [\pm 2m]^{\frac{r-m}{2m}}  \big\} & \qquad \text{if $m \ge 2$,} \\[1.5mm]
			\big\{ [2(r-1)]^1, [0]^{r}, [\pm 2]^{\frac{r-1}2} \big\} & \qquad \text{if $m =1$,}
		\end{cases}      
\end{align*}
and the odd spectrum of $G^*_{R,R^*\cup 0}$ is given by
\begin{align*}
Spec(G_{R,R^*\cup \{0\}}) & = \begin{cases}
	\big\{ [2(r-m)+1]^1, [1]^{2r-\frac rm},  [-2m+1]^{\frac{r-m}{m}} \big\} & \qquad \text{if $m \ge 2$,} \\[1mm]
	\big\{ [2r-1]^1, [-1]^{r-1}, [1]^{r} \big\} & \qquad \text{if $m =1$.} \end{cases} 
		\\[2mm]
		Spec(G_{R,R^*\cup \{0\}}^+) & = \begin{cases}
	\big\{ [2(r-m)+1]^1, [1]^{2r-\frac rm},  [\pm 2m+1]^{\frac{r-m}{2m}}  \big\} & \qquad \text{if $m \ge 2$,} \\[1mm]
			\big\{ [2r-1]^1, [1]^{r}, [3]^{\frac{r-1}2}, [-1]^{\frac{r-1}2} \big\} & \qquad \text{if $m =1$.}
		\end{cases}      
	\end{align*}
\end{coro}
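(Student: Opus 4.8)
The plan is to reduce everything to Proposition \ref{prop: spec bicayleys}, which expresses the spectrum of each $MX^*(R;R^*,T)$ with $T\in\{\{0\},R^*,R^*\cup\{0\}\}$ in terms of the spectrum of the underlying unitary Cayley (sum) graph $X^*(R,R^*)$. Thus the entire statement follows once I know $\mathrm{Spec}(G_R)$ and $\mathrm{Spec}(G_R^+)$ explicitly for a finite local ring $(R,\mathfrak m)$ of odd size, with $r=|R|$ and $m=|\mathfrak m|$.

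First I would compute $\mathrm{Spec}(G_R)=\mathrm{Spec}(X(R,R^*))$. Since $R^*=R\smallsetminus\mathfrak m$ is a symmetric subset of the abelian additive group of $R$, Lemma \ref{lem: eigenvalues X*GS}$(a)$ gives the eigenvalues as the character sums $\chi(R^*)=\sum_{u\in R^*}\chi(u)$ over the additive characters $\chi$. Writing $\chi(R^*)=\chi(R)-\chi(\mathfrak m)$ and using orthogonality, I would split the characters into three classes according to the restriction $\chi|_{\mathfrak m}$: the trivial character gives the principal eigenvalue $r-m$; the nontrivial characters that are trivial on $\mathfrak m$ (namely the $\tfrac rm-1$ nontrivial characters of the residue field $R/\mathfrak m$) give $-m$; and the characters nontrivial on $\mathfrak m$ give $0$. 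Counting yields
$$\mathrm{Spec}(G_R)=\big\{[r-m]^1,\ [0]^{\frac rm(m-1)},\ [-m]^{\frac{r-m}{m}}\big\}$$
for $m\ge 2$, while for $m=1$ one has $G_R=K_r$ with spectrum $\{[r-1]^1,[-1]^{r-1}\}$.

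Second I would pass to $\mathrm{Spec}(G_R^+)$ via Lemma \ref{lem: eigenvalues X*GS}$(b)$, whose eigenvalues are $\pm|\chi(R^*)|$. Here the parity hypothesis is essential: since $|R|$ is odd, the additive group has odd order, so the only self-conjugate additive character is the trivial one and every nontrivial character pairs with its distinct conjugate $\bar\chi$. Hence the principal eigenvalue $r-m$ survives unchanged, each conjugate pair with $\chi(R^*)=-m$ contributes one $+m$ and one $-m$ (so $-m$, of even multiplicity $\tfrac{r-m}{m}=q-1$ with $q=r/m$, splits evenly), and the characters with $\chi(R^*)=0$ still contribute $0$. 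This gives $\mathrm{Spec}(G_R^+)=\{[r-m]^1,[\pm m]^{\frac{r-m}{2m}},[0]^{\frac rm(m-1)}\}$ for $m\ge 2$ and $\{[r-1]^1,[\pm 1]^{\frac{r-1}{2}}\}$ for $m=1$.

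Finally, I would substitute these two spectra into the three formulas of Proposition \ref{prop: spec bicayleys}: apply $\lambda\mapsto\lambda\pm 1$ for $T=\{0\}$, apply $\lambda\mapsto 2\lambda$ together with an appended $[0]^{r}$ for $T=R^*$, and apply $\lambda\mapsto 2\lambda+1$ together with an appended $[-1]^{r}$ for $T=R^*\cup\{0\}$. Collecting equal eigenvalues and folding the appended multiplicities of $0$ (resp.\ $-1$) into those already present then produces exactly the eight displayed formulas, and the parity/non-symmetry remarks are immediate from Corollary \ref{coro: integral}. The main obstacle is the multiplicity bookkeeping in the sum-graph case, specifically verifying that the negative non-principal eigenvalue splits into a clean $\pm$ pair; this is precisely where oddness of $|R|$ is used, after which the remaining work is a routine case check separating $m\ge 2$ from the field case $m=1$.
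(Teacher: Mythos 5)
Your proposal is correct and follows essentially the same route as the paper: both reduce the corollary to Proposition \ref{prop: spec bicayleys} applied to the known spectra of $G_R$ and $G_R^+$ for a finite local ring of odd size. The only difference is that you re-derive those two input spectra from the character formulas of Lemma \ref{lem: eigenvalues X*GS} (correctly, including the conjugate-pairing argument that uses oddness of $|R|$), whereas the paper simply quotes them from \cite{PV}.
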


\begin{proof}
The result follows after some direct computations, from 
the following expressions for the spectrum of $G_R$ and $G^+_R$ 
(see (3.7) in Proposition 3.2, and (3.9) in its proof, from \cite{PV}): 
\begin{equation} \label{eq: Spec GR local}
    Spec(G_R) = \begin{cases}
        \{ [r-m]^1, [0]^{\frac{r}{m}(m-1)}, [-m]^{\frac rm -1} \} & \qquad m \ge 2, \\[1.5mm]
        \{ [r-1]^1, [-1]^{r-1} \} & \qquad m=1,
    \end{cases}
\end{equation} 
and 
\begin{equation} \label{eq: Spec GR+ local}
    Spec(G_R^+) = \begin{cases}
        \{ [r-m]^1, [m]^{\frac{r-m}{2m}}, [0]^{\frac{r}{m}(m-1)}, [-m]^{\frac{r-m}{2m}} \} & \qquad m \ge 2, \\[1.5mm]
        \{ [r-1]^1, [1]^{\frac{r-1}2}, [-1]^{\frac{r-1}2} \} & \qquad m=1,
    \end{cases}
\end{equation} 
together with Proposition \ref{prop: spec bicayleys}. 
We note for later use that the expression \eqref{eq: Spec GR local} is also valid for $r$ even. 
That the spectrum of $G_{R,0}^*$ is neither even nor odd follows since $m$ has the same parity as $r$, as $r$ is odd by assumption.
\end{proof}

As a nice example, let us consider Galois rings.
\begin{exam}[\textit{Galois rings}]
Let $p$ be an odd prime, $s, t \in \N$, and take 
    $$ R = GR(p^s, t) = \Z_{p^s}[x]/(f_t(x))$$ 
be the finite Galois ring of $p^{st}$ elements, where $f_t(x)$ is a monic irreducible polynomial of degree $t$. The ring $R$ is local with maximal ideal $\frak{m} = (p)$, hence $m = p^{(s-1)t}$ and 
    $$ |R^*| = p^{(s-1)t}(p^t - 1). $$ 
In particular, if $t = 1$ then $R=GR(p^s,1)$ is the local ring $\Z_{p^s}$ while if $s = 1$ we have that $R=GR(p,t)$ is the finite field $\ff_{p^t}$. 
Putting the values 
    $$ r=p^{st} \qquad \text{and} \qquad m=p^{(s-1)t} $$
in the expressions of Corollary \ref{coro: SpecGRR*} we obtain all the spectra of the six graphs $G^*_{R,0}$, $G^*_{R,R^*}$ and $G^*_{R,R^* \cup \{0\}}$, for $R=GR(p^s,t)$.  
\hfill $\diamond$ 
\end{exam}

\black

\section{Isospectrality in $\mathcal{F}$} \label{sec: crossed isosp}
Here we study the isospectrality between certain pairs of MDCGs in $\mathcal{F}$, focusing on three cases. In the first two, the pair $(G,S)$ will be fixed.
We begin by considering the graphs $MX^*(G;S,T)$ and $MX^*(G;S,T')$ for two different subsets $T,T'$. We call this crossed isospectrality. 
Then, we compare the spectrum of a graph $MX(G;S,T)$ with the spectrum of its sum version $MX^+(G;S,T)$.
Finally, we consider the general case of two triples $(G;S,T)$ and $(G';S',T')$.

\subsection{There are no crossed isospectrality in $\mathcal{F}$} \label{subsec: crossed isosp}
Here we show that, as one can probably expect, there is no 
isospectrality between the graphs $MX^*(G;S,T)$ and $MX^*(G;S,T')$ when the pair 
$(G,S)$ is fixed and the subsets $T$, $T'$ are either $\{e\}$, $S$ or 
$S \cup \{e\}$, but different.                           

This is clear when $X(G,S)$ is integral since, in this case, all associated MDCGs $MX^*(G;S,T)$ are integral, but $MX^*(G;S,S)$ is even, $MX^*(G;S,S \cup \{e\})$ is odd, and $MX^*(G;S,T)$ is (generically) neither even nor odd (see Corollary \ref{coro: integral}). This is also true in general.

\goodbreak 

\begin{prop}\label{prop: isospec T,T'}
Let $G$ be a group and $S \subset G$ with $|S|\ge 2$. Then:
\begin{enumerate}[$(a)$]
	\item The graphs $MX^*(G;S,\{e\})$ and $MX^*(G;S,S)$ are not isospectral. \sk 
	
	\item The graphs $MX^*(G;S,\{e\})$ and $MX^*(G;S,S\cup \{e\})$ are not isospectral. \sk 
	
	\item The graphs $MX^*(G;S,S)$ and $MX^*(G;S,S\cup\{e\})$ are not isospectral. 
\end{enumerate}
\end{prop}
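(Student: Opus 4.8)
The plan is to separate the three graphs by a single spectral invariant, namely the spectral radius, which for a regular digraph coincides with its regularity degree. Writing $s=|S|$, I would first recall from Proposition~\ref{prop: directedness}$(c)$ that the graphs $MX^{*}(G;S,\{e\})$, $MX^{*}(G;S,S)$ and $MX^{*}(G;S,S\cup\{e\})$ are regular of degrees $s+1$, $2s$ and $2s+1$, respectively; for the last graph this uses the standing convention $e\notin S$, so that $|S\cup\{e\}|=s+1$ and the degree is $s+(s+1)=2s+1$. These degrees hold verbatim for the mirror di-Cayley graph and for its sum version, so a single argument will cover both instances of $MX^{*}$.

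The key step is the following elementary observation, valid for directed as well as undirected graphs and in the presence of loops. If $\G$ is $d$-regular with adjacency matrix $A$, then $A$ is non-negative with every row sum equal to $d$; hence $A\mathbf{1}=d\,\mathbf{1}$ exhibits $d$ as an eigenvalue, while $\rho(A)\le\|A\|_{\infty}=d$ (the maximal absolute row sum) shows that no eigenvalue exceeds $d$ in modulus. Therefore the spectral radius $\rho(A)=\max\{|\lambda|:\lambda\in Eig(\G)\}$ equals $d$. Since $\rho(A)$ is manifestly determined by the multiset $Spec(\G)$, two isospectral regular graphs must have equal regularity degree.

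It then only remains to observe that $s+1$, $2s$ and $2s+1$ are pairwise distinct exactly when $s\ge 2$: indeed $s+1<2s<2s+1$ for $s\ge2$, while $s+1=2s$ collapses when $s=1$, which is precisely why the hypothesis $|S|\ge 2$ is imposed. Consequently no two of the three graphs share the same spectral radius, which yields $(a)$, $(b)$ and $(c)$ at once. The one point that needs care, and the main (if modest) obstacle, is that for non-symmetric (resp.\@ non-normal) $S$ the graph $MX(G;S,T)$ (resp.\@ $MX^{+}(G;S,T)$) is genuinely directed and may have complex eigenvalues; this is exactly why I would formulate the invariant as the spectral radius and bound it by $\|A\|_{\infty}$, rather than appeal to a ``largest eigenvalue'' that only makes sense in the undirected case. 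As a sanity check, in the undirected case the same three values $s+1$, $2s$, $2s+1$ appear as the top entries of the explicit spectra in Proposition~\ref{prop: spec bicayleys} (since there $\lambda^{*}_{1}=s$), and when $X(G,S)$ is integral part $(c)$ becomes even more immediate: by Corollary~\ref{coro: integral} one of $MX^{*}(G;S,S)$, $MX^{*}(G;S,S\cup\{e\})$ is even and the other odd, so their spectra are disjoint.
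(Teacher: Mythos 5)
Your proof is correct, and it takes a genuinely different and more economical route than the paper. The paper's proof assumes isospectrality, invokes the explicit spectra from Proposition~\ref{prop: spec bicayleys}, and then carries out a case analysis forcing $Spec(X^{*}(G,S))$ into an impossible shape (e.g.\@ $\{[1]^{|G|-s},[-1]^{s}\}$ in case $(a)$, $\{[-2]^{s},[0]^{|G|-s}\}$ in case $(b)$, and the non-integral value $-\tfrac12$ in case $(c)$), ruling each out by ad hoc arguments about regular graphs with few eigenvalues. You instead isolate a single spectral invariant: for a $d$-regular (possibly directed, possibly looped) graph, $A\mathbf{1}=d\mathbf{1}$ together with $\rho(A)\le\|A\|_{\infty}=d$ gives $\rho(A)=d$, so isospectral regular graphs must have equal degree, and the degrees $s+1$, $2s$, $2s+1$ from Proposition~\ref{prop: directedness}$(c)$ are pairwise distinct precisely when $s\ge 2$. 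This settles all three parts at once, locates the role of the hypothesis $|S|\ge 2$ exactly where the paper's own remark after Proposition~\ref{prop: directedness} places it (the collapse $s+1=2s$ at $s=1$), and is cleaner in the directed case, where eigenvalues may be complex and the modulus-based formulation you chose is the right one. What the paper's longer argument buys in exchange is some finer information about which eigenvalue patterns of $X^{*}(G,S)$ would be needed for crossed isospectrality, but none of that is required for the statement as given.
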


\begin{proof}
We will use the abbreviations $\G_e^*=MX^*(G;S,\{e\})$, $\G_S^*=MX^*(G;S,S)$ and $\G_{S \cup \{e\}}^*=MX^*(G;S,S\cup \{e\})$.
Since $|S|\ge 2$, the three graphs $\G_{e}^*$, $\G_{S}^*$ and $\G_{S \cup \{e\}}^*$ 
are all different (see the comments after Proposition \ref{prop: directedness}).

Assume that $Spec(X^*(G,S)) = \{ [\lambda_i]^{m_i^*}\}_{i \in I}$, with $\sum_i m_i = \sum_i m_i^* =|G|$.

\noindent ($a$) 
By Proposition \ref{prop: spec bicayleys}, we know that $\G_e^*$ and $\G_S^*$ 
are isospectral if and only if
$$\{ [\lambda^*_i+1]^{m_i^*}, [\lambda^*_i-1]^{m_i^*} \}_{i\in I} = 
\{ [2\lambda^*_i]^{m_i^*} \}_{i\in I}\cup \{[0]^{|G|}\}.$$

Suppose by contradiction that $\G_e^*$ is isospectral with $\G_S^*$. Hence, $|G|$ eigenvalues of $\G_e^*$ are equal to 0.
First, observe that if $\lambda_i^*+1=0$ for some $i$, then $\lambda_i^*-1\ne 0$ 
(actually, if $\lambda^*_i+1=0$, then $\lambda^*_i-1=2\lambda_i^*$ and, if $\lambda_j^*-1=0$, then $\lambda^*_j+1=2\lambda^*_j$). 
Hence, there exists some $s$ with $0\le s\le |G|$,
such that, after reordering if necessary, $\lambda_i^*+1=0$ for $1\le i \le s$ and $\lambda_j^*-1=0$ for $s+1 \le i \le |G|$. 
That is,  $$ \lambda_1^*+1, \ldots, \lambda_s^*+1, \lambda_{s+1}^*-1, \ldots, \lambda_{|G|}^*-1,$$
are the 0 eigenvalues in $\G_e^*$.
Therefore, $X^*(G,S)$ has as eigenvalues $-1$ with multiplicity $s$ and $1$ with multiplicity $|G|-s$, i.e.
    $$ Spec(X^*(G,S)) = \{[1]^{|G|-s}, [-1]^s\}. $$

If $s=|G|$, we have that $X^*(G,S)$ has $-1$ as its only eigenvalue, which is impossible since the degree of regularity of $X^*(G,S)$ is also an eigenvalue. 

If $s=0$, then the only eigenvalue of $X^*(G,S)$ is $1$. Hence, the Cayley graph is of the form $X^*(G,\{e\})$ for some group $G$, but we have that $|S|\ge 2$; therefore this case cannot occur. 

If $0<s<|G|$, it is well-known that a regular graph with 
only two different eigenvalues is a complete graph. Hence, $X^*(G,S)$ is a complete graph. 
Moreover, since the only positive eigenvalue is $1$, then $X^*(G,S)$ is $1$-regular. Therefore, $X^*(G,S)=P_2$. But, since $|S|\ge 2$, this is impossible.

Thus, $\G_e^*$ and $\G_{S}^*$ are not isospectral.
\sk 

\noindent $(b)$ 
By Proposition \ref{prop: spec bicayleys}, the graphs $\G_e^*$ and 
$\G_{S \cup \{e\}}^*$
are isospectral if and only if
	$$\{[\lambda^*_i + 1]^{m_i^*},[\lambda^*_i-1]^{m_i^*}\}_{i\in I} = \{ [2\lambda^*_i+1]^{m_i^*} \}_{i\in I}\cup\{[-1]^{|G|}\}.$$
Notice that if $\lambda_i+1=-1$, then $\lambda_i-1\ne -1$. Hence, there exists $s$ such that, after reordering if necessary, $\lambda^*_i+1=-1$ for $i=1,\dots,s$ and $\lambda^*_i-1=-1$ for $i=s+1,\dots,|G|$. Therefore, $X^*(G,S)$ has as eigenvalues $-2$ with multiplicity $s$ and $0$ with  multiplicity $|G|-s$. Notice that when $\lambda^*_i+1=-1$ we have that $\lambda^*_i-1=2\lambda^*_i+1$ and when $\lambda^*_i-1=-1$ we have that $\lambda^*_i+1=2\lambda^*_i+1$.
	
Then, $\G_e^*$ and $\G_{S \cup \{e\}}^*$
are isospectral if and only if 
    $$ Spec(X^*(G,S)) = \{[-2]^{s},[0]^{|G|-s}\}.$$ 
This is a contradiction, since $X^*(G,S)$ being a regular graph has at least one positive eigenvalue (the degree of regularity). Therefore, $\G_e^*$ and $\G_{S \cup \{e\}}^*$ 
are not isospectral. 

\sk 

\noindent $(c)$ 
In this case, we have that $\G_S^*$ and $\G_{S \cup \{e\}}^*$ are isospectral if and only if
	$$ \{[2\lambda^*_i]^{m_i}\}_{i\in I}\cup\{[0]^{|G|}\} = \{[2\lambda^*_i+1]^{m_i} \}_{i\in I}\cup \{[-1]^{|G|}\}.$$
Then, $2\lambda^*_i=-1$ and $2\lambda^*_i+1=0$, i.e.\@ $\lambda^*_i=-\tfrac{1}{2}$ for all $i \in I$. But $\lambda^*_i=-\tfrac{1}{2}$ is a contradiction since no graph can have rational non integral eigenvalues.
Another argument is that, since $X^*(G,S)$ is a regular graph, it has at least one positive eigenvalue (the degree of regularity).
Therefore, $\G_S^*$ and $\G_{S \cup \{e\}}^*$ are not isospectral. 
\end{proof}		
	
The graphs in Example \ref{exam: CayZ4} illustrates the proposition.

\begin{rem} 
A similar but more general conclusion that in Proposition \ref{prop: isospec T,T'} can be obtained by using two different subsets $S$ and $S'$ of $G$. In fact, any $MX^*(G;S,T)$ is not isospectral to $MX^*(G;S',T')$, provided that $T$ and $T'$ are of `different form'.  
More precisely, the graph $MX^*(G;S,\{e\})$ is not isospectral to 
$MX^*(G;S',S')$ and $MX^*(G;S',S'\cup\{e\})$;   
$MX^*(G;S,S)$ is not isospectral to 
$MX^*(G;S',\{e\})$ and $MX^*(G;S',S'\cup\{e\})$; and 
$MX^*(G;S,S\cup \{e\})$ is not isospectral to 
$MX^*(G;S',\{e\})$ and $MX^*(G;S',S')$ for any $S'$.
If $S'=S$, then we get Proposition \ref{prop: isospec T,T'}.
\end{rem}

\subsection{Isospectrality between $MX(G;S,T)$ and $MX^+(G;S,T)$} \label{sec: isosp}
Here we study the isospectrality between MDCGs in $\mathcal{F}$ of the form $MX(G;S,T)$ and 
$MX^+(G;S,T)$. 

We begin by showing that a mirror di-Cayley graph in $\mathcal{F}$ and its sum version 
are isospectral if and only if the corresponding underlying Cayley graphs are isospectral.

\begin{thm} \label{thm: isosp X,X+}
Let $G$ be a group, $S \subset G$ and $\mathcal{S} = \{\{e\}, S, S\cup\{e\} \}$. 
For each $T \in \mathcal{S}$, the graphs 
$MX(G;S,T)$ and $MX^{+}(G;S,T)$ are isospectral if and only if the graphs $X(G,S)$ and $X^{+}(G,S)$ are isospectral. 
\end{thm}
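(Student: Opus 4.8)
The plan is to lean entirely on Proposition \ref{prop: spec bicayleys}, which expresses, for each fixed $T \in \mathcal{S}$, the spectrum of $MX^*(G;S,T)$ as the image of the spectrum of $X^*(G,S)$ under one transformation $\Phi_T$ that is the \emph{same} for the Cayley and the Cayley-sum version. Writing $Spec(X(G,S)) = \{[\lambda_i]^{m_i}\}_{i\in I}$ and $Spec(X^{+}(G,S)) = \{[\mu_j]^{n_j}\}_{j\in J}$, each of total multiplicity $|G|$, the forward implication is immediate: if $X(G,S)$ and $X^{+}(G,S)$ are isospectral they feed the same input multiset into $\Phi_T$, so $Spec(MX(G;S,T)) = \Phi_T(Spec(X(G,S)))$ and $Spec(MX^{+}(G;S,T)) = \Phi_T(Spec(X^{+}(G,S)))$ coincide. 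Hence the entire content lies in the reverse implication, which is exactly the assertion that each $\Phi_T$ is \emph{injective} on finite multisets.

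For $T = S$ and $T = S \cup \{e\}$ this injectivity is elementary and I would dispatch it by multiset cancellation. Assuming $MX(G;S,S)$ and $MX^{+}(G;S,S)$ isospectral, Proposition \ref{prop: spec bicayleys} reads
\[ \{[2\lambda_i]^{m_i}\}_{i\in I} \cup \{[0]^{|G|}\} = \{[2\mu_j]^{n_j}\}_{j\in J} \cup \{[0]^{|G|}\}. \]
Both sides contain the block $\{[0]^{|G|}\}$, so cancelling it leaves $\{[2\lambda_i]^{m_i}\} = \{[2\mu_j]^{n_j}\}$, and applying the bijection $x \mapsto x/2$ of $\mathbb{C}$ yields $\{[\lambda_i]^{m_i}\} = \{[\mu_j]^{n_j}\}$, i.e.\ $X(G,S)$ and $X^{+}(G,S)$ are isospectral. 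The case $T = S \cup \{e\}$ is identical after cancelling the common block $\{[-1]^{|G|}\}$ and applying $x \mapsto (x-1)/2$.

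The case $T = \{e\}$ is the genuine obstacle, since there $\Phi_{\{e\}}$ sends a single eigenvalue $\lambda$ to the \emph{pair} $\lambda \pm 1$, so a cancellation/affine-relabelling argument no longer applies. I would isolate the statement: if finite multisets $A,B \subset \mathbb{C}$ satisfy $\{a\pm 1 : a \in A\} = \{b\pm 1 : b \in B\}$, then $A = B$. The cleanest route is via power sums $p_k(A) = \sum_{a\in A} a^k$. Expanding $(a+1)^k + (a-1)^k = 2\sum_{\ell\ge 0}\binom{k}{2\ell} a^{k-2\ell}$, the $k$-th power sum of the doubled multiset equals $2\sum_{\ell\ge 0}\binom{k}{2\ell} p_{k-2\ell}(A)$, whose leading term is $2 p_k(A)$. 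Equality of the doubled multisets then forces $p_k(A) = p_k(B)$ for all $k$ by induction on $k$, and a finite multiset is determined by its power sums (Newton's identities), so $A = B$. Taking $A = \{[\lambda_i]^{m_i}\}$, $B = \{[\mu_j]^{n_j}\}$ turns the hypothesis $Spec(MX(G;S,\{e\})) = Spec(MX^{+}(G;S,\{e\}))$ into precisely $A = B$. An equivalent packaging, which I might present instead, uses the block-determinant factorization $\det((xI - A)^2 - I) = \chi_A(x-1)\chi_A(x+1)$ to get the characteristic-polynomial identity $\chi_{MX(G;S,\{e\})}(x) = \chi_{X(G,S)}(x-1)\,\chi_{X(G,S)}(x+1)$, from which cancellation recovers $\chi_{X(G,S)} = \chi_{X^{+}(G,S)}$. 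Either way, the three cases together establish the equivalence for every $T \in \mathcal{S}$.
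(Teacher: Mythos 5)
Your proof is correct, and for the only nontrivial case, $T=\{e\}$, it takes a genuinely different route from the paper. Both arguments agree on the easy parts: the forward implication and the cases $T=S$, $T=S\cup\{e\}$ are immediate from Proposition \ref{prop: spec bicayleys} by cancelling the common block $\{[0]^{|G|}\}$ (resp.\@ $\{[-1]^{|G|}\}$) and applying an affine bijection. For the converse when $T=\{e\}$, the paper argues combinatorially: it partitions the eigenvalues into blocks of equal multiplicity, assumes no direct match $\lambda_i+1=\lambda_j^++1$ exists, encodes the forced matching $\lambda_i-1=\lambda_{\sigma(i)}^++1$ by a permutation $\sigma$, and iterates around a cycle of $\sigma$ to reach the contradiction $\lambda^+_{\sigma^{-1}(1)}-4=\lambda^+_{\sigma^{-1}(1)}+4r$, followed by an induction on the block size. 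You instead prove the clean injectivity lemma that $\{a\pm1:a\in A\}=\{b\pm1:b\in B\}$ forces $A=B$ for finite multisets in $\C$, via the identity $p_k(\{a\pm1\})=2\sum_{\ell\ge0}\binom{k}{2\ell}p_{k-2\ell}(A)$, induction on $k$ (the base case $p_0(A)=p_0(B)=|G|$ is available since both spectra have total multiplicity $|G|$), and Newton's identities. This is shorter, avoids the paper's somewhat delicate reduction to equal-multiplicity blocks, and isolates a reusable statement that also yields Theorem \ref{thm: gen isosp} verbatim. One caveat: your ``equivalent packaging'' via $\chi_{\G}(x-1)\chi_{\G}(x+1)=\chi_{\G^+}(x-1)\chi_{\G^+}(x+1)$ does not follow by formal polynomial cancellation --- extracting $\chi_{\G}=\chi_{\G^+}$ from that identity is exactly the multiset lemma again --- so you should present the power-sum argument as the actual proof and drop or qualify that remark.
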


\begin{proof}
In the case where $T=S$ or $S\cup\{e\}$, the isospectrality statement follows directly from 
Proposition \ref{prop: spec bicayleys}. 
		
If $T=\{e\}$, put 
    $$\G_e=MX(G;S,\{e\}) \quad \text{and} \quad \G_e^+=MX^+(G;S,\{e\}).$$
By \eqref{eq: specbi}, we have that $\G_e$ 
and $\G_e^+$ are isospectral if and only if 
\begin{equation} \label{eq: Specs}
	\{[\lambda_i + 1]^{m_i},[\lambda_i-1]^{m_i}\}_{ i\in I} = \{[\lambda_j^{+} + 1]^{m_j^+},[\lambda_j^+-1]^{m_j^+}\}_{j\in J},
\end{equation}
where $Spec(X(G,S)) = \{[\lambda_i]^{m_i}\}_{i\in I}$ and $Spec(X^+(G,S)) = \{[\lambda_j^+]^{m_j^+}\}_{j \in J}$. 
		
If $X(G,S)$ and $X^+(G,S)$ are isospectral, we can assume that  $I=J$ and, after reordering if necessary, that $\lambda_i=\lambda_i^+$ and $m_i=m_i^+$ for every $i \in I$. 
Hence, 
    $$\lambda_i\pm 1=\lambda_i^+\pm 1$$ 
for any $i\in I$, and thus the graphs 
$\G_e$ and $\G_e^+$ are isospectral.

Conversely, if $\G_e$ and $\G_e^+$ are isospectral, then \eqref{eq: Specs} holds and we want to show that $X(G,S)$ and $X^+(G,S)$ are isospectral. First, notice that if $m_i \ne m_j^+$ for some $i\in I$ and $j\in J$, then 
    $$\lambda_i +1\neq \lambda_j^+\pm 1 \qquad \text{and} \qquad \lambda_i - 1\neq \lambda_j^+\pm 1.$$ 
Therefore, we can study separately the blocks of eigenvalues all having the same multiplicity. Thus, in each of such blocks, without loss of generality, we can assume that this common multiplicity is 1 for every eigenvalue in the block.
Then, consider an arbitrary block
\begin{equation} \label{eq: blocks}
\{\lambda_1+1,\lambda_1-1,\dots,\lambda_\ell+1,\lambda_\ell-1\} = \{\lambda_1^++1,\lambda_1^+-1,\dots,\lambda_\ell^++1,\lambda_\ell^+-1\}.
\end{equation}
Suppose that for each $i$ we have that 
    $$\lambda_i+1 \ne \lambda_j^++1$$ 
for all $j$. 
Then, for each $i$, the eigenvalue $\lambda_i+1$ must be the eigenvalue $\lambda_k^+ -1$ for some $k$.
Hence, we reorder the eigenvalues such that we get 
\begin{equation} \label{eq: isospec 1}
    \lambda_i+1 = \lambda_i^+-1 \qquad \quad (1\le i \le \ell),
\end{equation} 
and we consider the permutation  $\sigma \in \mathbb{S}_\ell$ such that
\begin{equation} \label{eq: isospec 2}
    \lambda_i-1 = \lambda_{\sigma(i)}^++1 \qquad \quad (1\le i \le \ell).
\end{equation}
		
Using \eqref{eq: isospec 2}, we have that
\begin{equation} \label{eq: isospec 3}	 
	\lambda_1^+=\lambda_{\sigma^{-1}(1)}-2,
\end{equation}	
and, since  $\lambda_{\sigma^{-1}(1)}=\lambda_{\sigma^{-1}(1)}^+-2$ by \eqref{eq: isospec 1}, it follows that
	$$\lambda_1^+=\lambda_{\sigma^{-1}(1)}^+-4.$$
		
On the other hand, using \eqref{eq: isospec 1} and \eqref{eq: isospec 2} repeatedly, we have that
	\begin{equation} \label{eq: isospec 4}	 
		\begin{tabular}{rcl}
			$\lambda_1^+$ &=& $\lambda_1+2=\lambda_{\sigma(1)}^++4$ \\
			&$=$& $\lambda_{\sigma(1)}+6$ = $\lambda_{\sigma^2(1)}^++8$ \\
			&$=$&  $\lambda_{\sigma^2(1)}+10$ = $\lambda_{\sigma^3(1)}^++12$ \\
			&  $\vdots$ & \\
			&$=$& $\lambda_{\sigma^{r-1}(1)}^+ + 4(r-1)+2 = \lambda_{\sigma^r(1)}^++4r = \lambda_{\sigma^{-1}(1)}^++4r$,
		\end{tabular}
	\end{equation}	
where $1 \le r \le \ell$ is such that $\sigma^r(1)=\sigma^{-1}(1)$, since $\sigma$ is a permutation.
Therefore, by putting together \eqref{eq: isospec 3} and \eqref{eq: isospec 4} we finally get 
	$$\lambda_{\sigma^{-1}(1)}^+-4 = \lambda_1^+ = \lambda_{\sigma^{-1}(1)}^++4r,$$
which is a contradiction since $r>0$. Therefore, there exists $i$ such that $\lambda_i+1=\lambda_i^++1$, that is  $\lambda_i=\lambda_i^+$.
		
Now, we do induction on the size of an arbitrary block, say $\ell$, to see that $\lambda_i=\lambda_i^+$ for all $1\le i \le \ell$. 
If $\ell=1$ we have
    $$\{\lambda_1+1,\lambda_1-1\}=\{\lambda_1^++1,\lambda_1^+-1\}$$
and by the previous reasoning we have  $\lambda_1+1=\lambda_1^++1$. 
Then, $\lambda_1-1=\lambda_1^+-1$ and hence $\lambda_1=\lambda_1^+$.
Assume by induction that if
    $$\{\lambda_1+1,\lambda_1-1,\dots,\lambda_k+1,\lambda_k-1\}=\{\lambda_1^++1,\lambda_1^+-1,\dots,\lambda_k^++1,\lambda_k^+-1\},$$
for some $1<k<\ell$, then $\lambda_i=\lambda_i^+$ for all $i=1, \ldots, k$ and suppose that
    $$\{\lambda_1+1,\lambda_1-1,\dots,\lambda_{k+1}+1,\lambda_{k+1}-1\}=\{\lambda_1^++1,\lambda_1^+-1,\dots,\lambda_{k+1}^++1,\lambda_{k+1}^+-1\}.$$
By the same argument as before, there exists $1\le i \le k+1$ such that $\lambda_i=\lambda_i^+$. 
Reordering the eigenvalues we can assume that $\lambda_{k+1}=\lambda_{k+1}^+$. 
Hence, $\lambda_{k+1}+1=\lambda_{k+1}^++1$ and $\lambda_{k+1}-1=\lambda_{k+1}^+-1$. 
So, we have that
    $$\{\lambda_1+1,\lambda_1-1,\dots,\lambda_k+1,\lambda_k-1\}=\{\lambda_1^++1,\lambda_1^+-1,\dots,\lambda_k^++1,\lambda_k^++1\}.$$
Then, by the inductive hypothesis, $\lambda_i=\lambda_i^+$ for all $i=1,\dots, k$, and hence, by finite induction, \eqref{eq: blocks} holds.
		
Therefore, we have proved that $X(G,S)$ and $X^+(G,S)$ are isospectral.
\end{proof}
	
Notice that, by Lemma \ref{lem: eigenvalues X*GS}, $X(G,S)$ and $X^+(G,S)$ can only be isospectral if $S$ is symmetric, since $Spec(X^+(G,S))$ is always real but $Spec(X(G,S))$ is real if and only if $S$ is symmetric.

\begin{exam} \label{exam: D4}
Consider the the dihedral group $\mathbb{D}_4= \langle \rho, \tau : \rho^4=\tau^2, \tau \rho \tau^{-1}=\rho^3 \rangle$, the connection set $S=\{\rho, \tau\}$, and the Cayley (sum) graphs 
	$$ \G=X(\mathbb{D}_4,S) \qquad \text{and} \qquad \G^+=X^+(\mathbb{D}_4,S).$$

They are loopless 2-regular graphs (see the Note after Proposition \ref{prop: directedness}) which are not isomorphic. In fact, $\G$ is a mixed (having both directed and undirected edges) because $S$ is neither symmetric nor antisymmetric, since $S\ne S^{-1}$ and $S \cap S^{-1} = \{id\} \ne \varnothing$. On the other hand, $S$ is antinormal because $N_{\mathbb{D}_4}(S) = \{ g\in \mathbb{D}_4 : gSg^{-1} =S\} = \{id, \rho^2\}$. Hence, $S \cap N_{\mathbb{D}_4}(S) = \varnothing$. By item ($b$) of Proposition \ref{prop: directedness} for Cayley graphs, $\G^+$ is directed. Hence, $\G \not \simeq \G^+$.

The graphs are isospectral with even spectrum $Spec(\G^*)= \{ [2]^1, [0]^6, [-2]^1\}$.
Thus, by the previous theorem, we have the isospectral pairs
	$$ \{ \G_e, \G^+_e \},  \qquad \{\G_S, \G^+_S \}, \qquad \{ \G_{S \cup e}, \G^+_{S \cup e} \}.$$
Their spectra are given by 
\begin{gather*}
	Spec(\G_e^*)= \{ [3]^1, [1]^7, [-1]^7, [-3]^1\}, \qquad Spec(\G_S^*)= \{ [4]^1, [0]^{14}, [-4]^1\}, \\[1mm]
		Spec(\G_{S \cup e}^*) = \{ [5]^1, [1]^{6}, [-1]^8, [-3]^1\}.
\end{gather*} 
Thus, $\G^*_e$ are odd, $\G^*_S$ are even, and $\G^*_{S \cup e}$ are integral, but neither even nor odd.  

Finally, it can be proved that these graphs cannot be obtained as abelian graphs (they are not isomorphic to abelian (sum) Cayley graphs).
Moreover, it is worth noticing that the pair in this example is the only pair of isospectral Cayley graphs of the form $\{X(G,S), X^+(G,S)\}$ with group $G$ of order up to 15 (see \cite{ChP}).
\hfill $\diamond$
\end{exam}

We now show that for $(G,S)$ an abelian-symmetric pair, that is $G$ abelian and $S$ symmetric not containing $0$, 
if the spectra of the Cayley graphs $X(G,S)$ and $X^+(G,S)$ are symmetric (i.e.\@ $m(\lambda^*)=m(-\lambda^*)$ for every eigenvalue $\lambda^*$ of $X^*(G,S)$), then the mirror di-Cayley (sum) graphs 
$MX(G;S,T)$ and $MX^+(G;S,T)$ are isospectral, for $T \in \mathcal{S}$ with $\mathcal{S} = \{\{0\}, S, S \cup \{0\}\}$.

\begin{coro} \label{coro: X&X+ isosp}
Let $G$ be a finite abelian group and $S$ a symmetric subset of $G$ with $0 \notin S$. For $T\in\{\{0\}, S, S\cup\{0\}\}$ we have:
		
\begin{enumerate}[$(a)$]
    \item If $Spec(X(G,S))$ and $Spec(X^+(G,S))$ are both symmetric, then $MX(G;S,T)$ and $MX^+(G;S,T)$ are isospectral.  \sk
			
    \item If $S$ is also normal and $X(G,S)$ and $X^+(G,S)$ are bipartite, then $MX(G;S,T)$ and $MX^+(G;S,T)$ are isospectral.
\end{enumerate}
\end{coro}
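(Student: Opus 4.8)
The plan is to deduce both parts directly from Theorem~\ref{thm: isosp X,X+}, which reduces the isospectrality of the MDCG pair $\{MX(G;S,T), MX^+(G;S,T)\}$ (for any $T \in \mathcal{S}$) to the isospectrality of the underlying pair $\{X(G,S), X^+(G,S)\}$. So in each case it suffices to show that these two Cayley graphs are isospectral, and the whole argument becomes one about the base graphs rather than about the mirror constructions. The key facts I will invoke are the character formulas of Lemma~\ref{lem: eigenvalues X*GS}: since $G$ is abelian, $\chi(1)=1$ for every $\chi \in \hat G$, so the eigenvalues of $X(G,S)$ are $\{\chi(S)\}_{\chi \in \hat G}$ and the eigenvalues of $X^+(G,S)$ are $\{\pm|\chi(S)|\}_{\chi \in \hat G}$.

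For part~$(a)$: first I would record that, because $S$ is symmetric, $X(G,S)$ is undirected and its spectrum is real; moreover each eigenvalue $\chi(S)$ is then a real number whose absolute value is $|\chi(S)|$, so $\chi(S) \in \{+|\chi(S)|,\, -|\chi(S)|\}$. The point is that $Spec(X^+(G,S))$, read as a multiset, consists precisely of the numbers $\pm|\chi(S)|$, i.e. of the absolute values of the $X(G,S)$-eigenvalues together with their negatives, suitably accounted for by multiplicity. The symmetry hypothesis on $Spec(X(G,S))$ says $m(\lambda)=m(-\lambda)$ for every $\lambda$; I would use this to show that passing from the signed values $\chi(S)$ to the symmetrized values $\pm|\chi(S)|$ does not change the multiset. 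Concretely, for a fixed value $v>0$, the multiplicity of $v$ in $Spec(X(G,S))$ equals that of $-v$ by symmetry, so the combined count of $\pm v$ is the same whether one lists the signed eigenvalues of $X(G,S)$ or the symmetrized eigenvalues $\pm|\chi(S)|$ of $X^+(G,S)$; the zero eigenvalues match trivially. Hence $Spec(X(G,S)) = Spec(X^+(G,S))$, and Theorem~\ref{thm: isosp X,X+} finishes part~$(a)$.

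For part~$(b)$: I would reduce it to part~$(a)$ by recalling the spectral characterization of bipartiteness stated in the introduction, namely that a regular graph is bipartite if and only if its spectrum is symmetric. Both $X(G,S)$ and $X^+(G,S)$ are regular of degree $|S|$, so their being bipartite is exactly the statement that $Spec(X(G,S))$ and $Spec(X^+(G,S))$ are both symmetric, which is the hypothesis of part~$(a)$. Thus part~$(b)$ is an immediate consequence of part~$(a)$ once one notes that $S$ being normal (automatic here since $G$ is abelian) and symmetric places us in the setting where the character formulas and Theorem~\ref{thm: isosp X,X+} apply.

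The main obstacle I anticipate is the bookkeeping in part~$(a)$: one must be careful that the map $\chi \mapsto \chi(S)$ and the map $\chi \mapsto \pm|\chi(S)|$ produce the \emph{same multiset with multiplicities}, not merely the same set of values. The cleanest way to handle this is to partition $\hat G$ according to the value of $|\chi(S)|$ and compare multiplicities block by block, using the symmetry hypothesis $m(\lambda)=m(-\lambda)$ to match the signed and symmetrized counts; the case $\chi(S)=0$ should be checked separately to confirm it contributes identically on both sides.
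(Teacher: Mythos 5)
Your overall skeleton is exactly the paper's: reduce via Theorem~\ref{thm: isosp X,X+} to the isospectrality of $X(G,S)$ and $X^+(G,S)$, and deduce $(b)$ from $(a)$ by the fact that a regular undirected graph is bipartite if and only if its spectrum is symmetric. Where you diverge is the key step of $(a)$: the paper simply cites Corollary~2.9 of \cite{PV} for the statement that symmetric spectra force $Spec(X(G,S))=Spec(X^+(G,S))$, whereas you propose to re-derive this from the character formulas of Lemma~\ref{lem: eigenvalues X*GS}. Your route is self-contained and in principle sound: since $S$ is symmetric each $\chi(S)$ is real, the multisets of absolute values of the eigenvalues of $X(G,S)$ and $X^+(G,S)$ both coincide with $\{|\chi(S)|\}_{\chi\in\hat G}$, and two symmetric spectra with the same multiset of absolute values are equal. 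Two points need care if you flesh this out. First, the block-by-block matching only gives the \emph{combined} count $m(v)+m(-v)=m^+(v)+m^+(-v)$ for each $v>0$; to split this into $m(v)=m^+(v)$ you must invoke the symmetry of \emph{both} spectra (both are hypothesized in $(a)$, but your write-up leans only on the symmetry of $Spec(X(G,S))$). Second, Lemma~\ref{lem: eigenvalues X*GS} gives ${\rm Eig}(\G^+)$ only as a set $\{\pm|\chi(S)|\}$ without multiplicities, so the claim that the absolute values of the $X^+$-eigenvalues form the multiset $\{|\chi(S)|\}_{\chi}$ (each character contributing exactly one eigenvalue) is precisely the bookkeeping that \cite{PV} supplies; you would need to justify it, e.g.\ by noting that the conjugate pair $\{\chi,\bar\chi\}$ contributes the pair $\pm|\chi(S)|$ while a self-conjugate $\chi$ contributes the single real value $\chi(S)$. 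With those two points made explicit, your argument is a correct, more elementary replacement for the external citation; the paper's version buys brevity at the cost of depending on \cite{PV}.
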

	
\begin{proof}
By Theorem \ref{thm: isosp X,X+}, it is enough to see that $X(G,S)$ and $X^+(G,S)$ are isospectral graphs.
For $(a)$, by Corollary $2.9$ of \cite{PV}, the graphs $X(G,S)$ and $X^+(G,S)$ are isospectral. 

For $(b)$, notice that $X(G,S)$ and $X^+(G,S)$ are undirected ($S$ being symmetric and normal, respectively). An undirected graph has symmetric spectrum if and only if it is bipartite. Again, by Corollary $2.9$ of \cite{PV}, $Spec(X(G,S))=Spec(X^+(G,S))$.
Then, in both cases, the graphs $MX(G;S,T)$ and 
$MX^+(G;S,T)$ are isospectral. 
\end{proof}

\subsection{Isospectrality between $MX^{*_1}(G_1;S_1,T_1)$ and $MX^{*_2}(G_2;S_2,T_2)$}
We now consider the more general situation; that is, isospectrality between graphs arising from different pairs $(G_1,S_1)$ and $(G_2,S_2)$. 

We will show that the mirror di-Cayley (sum) graphs $MX^{*_1}(G_1;S_1,T_1)$ and $MX^{*_2}(G_2;S_2,T_2)$, with $T_i \in  \mathcal{S}_i = \{\{e_i\}, S_i, S_i \cup \{e_i\} \}$ for $i=1,2$ of the same type (e.g., $T_1=S_1\cup \{e_1\}$ and $T_2=S_2 \cup \{e_2\}$), are isospectral if and only if the corresponding Cayley graphs $X^{*_1}(G_1,S_1)$ and $X^{*_2}(G_2,S_2)$ are isospectral. 
	 
\noindent \textit{Note}.
Here we use the notation $X^{*_1}(G_1,S_1)$ and $X^{*_2}(G_2,S_2)$ (and the same for MDCGs) to emphasize that, unless in the rest of the work where we use $X^{*}(G_1,S_1)$ and $X^{*}(G_2,S_2)$ to refer to both $X(G_1,S_1)$ and $X(G_2,S_2)$ or to $X^+(G_1,S_1)$ and $X^+(G_2,S_2)$, now we can have the crossed situations also: $X(G_1,S_1)$ and $X^+(G_2,S_2)$ or $X^+(G_1,S_1)$ and $X(G_2,S_2)$.

\begin{thm} \label{thm: gen isosp}
For $i=1,2$, let $G_i$ be a group and $S_i \subset G_i$. 
The sets of graphs  
\begin{gather*}
\{MX^{*_1}(G_1;S_1,\{e_1\}), MX^{*_2}(G_2;S_2,\{e_2\}) \}, \\ 
\{ MX^{*_1}(G_1;S_1,S_1), MX^{*_2}(G_2;S_2,S_2) \}, \\ 
\{ MX^{*_1}(G_1;S_1,S_1 \cup \{e_1\}), MX^{*_2}(G_2;S_2, S_2 \cup \{e_2\}) \},     
\end{gather*}
\nopagebreak
form isospectral pairs if and only if $X^{*_1}(G_1,S_1)$ and $X^{*_2}(G_2,S_2)$ are isospectral. 
Moreover, in the isospectral case, we must have $|G_1|=|G_2|$ and $|S_1|=|S_2|$.
\end{thm}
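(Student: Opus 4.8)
The plan is to reduce each of the three isospectrality statements to the isospectrality of the underlying Cayley graphs by exploiting the explicit spectral formulas in Proposition~\ref{prop: spec bicayleys}. The key observation is that each of the three product constructions applies the \emph{same} affine or multiplicative transformation to the eigenvalues of $X^*(G,S)$, so that the spectrum of $MX^*(G_i;S_i,T_i)$ is an invertible function of $Spec(X^*(G_i,S_i))$. Thus, recovering the Cayley spectrum from the MDCG spectrum is the crux, and each case is handled by inverting the appropriate transformation.

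\smallskip

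\emph{The easy directions.} For all three types, the forward implication is immediate: if $X^*(G_1,S_1)$ and $X^*(G_2,S_2)$ are isospectral, then $Spec(X^*(G_1,S_1))=Spec(X^*(G_2,S_2))$, and applying the formulas of Proposition~\ref{prop: spec bicayleys} (which depend only on the multiset $\{[\lambda_i^*]^{m_i^*}\}$) yields identical spectra for the corresponding MDCGs. This also forces $|G_1|=|G_2|$, since the number of vertices of each MDCG is $2|G_i|$, and $|S_1|=|S_2|$, since the principal eigenvalue (the regularity degree) is determined by $|S_i|$ via Proposition~\ref{prop: directedness}(c). So I would record this first.

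\smallskip

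\emph{The converse, case by case.} The cases $T=S$ and $T=S\cup\{e\}$ are the cleanest, so I would do them first. For $T=S$, the spectrum of $MX^*(G_i;S_i,S_i)$ is $\{[2\lambda_i^*]^{m_i^*}\}\cup\{[0]^{|G_i|}\}$; since $x\mapsto 2x$ is a bijection on $\R$ and the extra block of $|G_i|$ zeros is itself determined by the vertex count, equality of the two MDCG spectra forces $\{[2\lambda_i^*]^{m_i^*}\}=\{[2\mu_j^*]^{n_j^*}\}$ after removing the zero block, hence $\{[\lambda_i^*]^{m_i^*}\}=\{[\mu_j^*]^{n_j^*}\}$ by dividing by $2$. (One must check that the zero eigenvalues in the $2\lambda_i^*$ block, coming from $\lambda_i^*=0$, are matched consistently; since they simply merge with the $[0]^{|G_i|}$ block on each side and the total multiplicity of $0$ is determined, this is harmless.) The case $T=S\cup\{e\}$ is identical, replacing $x\mapsto 2x$ by the affine bijection $x\mapsto 2x+1$ and the zero block by the $[-1]^{|G_i|}$ block; here $2x+1=-1$ forces $x=-1$, so one checks the merging of $\lambda_i^*=-1$ with the $[-1]$ block as before.

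\smallskip

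\emph{The hard case, and the main obstacle.} The case $T=\{e\}$ is genuinely harder because the transformation $\lambda^*\mapsto\{\lambda^*+1,\lambda^*-1\}$ doubles each eigenvalue into a symmetric pair, and this map is \emph{not} injective as a map on multisets: different Cayley spectra can a priori produce the same ``fused'' multiset of $\lambda^*\pm 1$ values. This is precisely the phenomenon that the long combinatorial argument in the proof of Theorem~\ref{thm: isosp X,X+} was designed to resolve, so I would reuse that machinery verbatim. The strategy: from $\{[\lambda_i+1]^{m_i},[\lambda_i-1]^{m_i}\}=\{[\mu_j+1]^{n_j},[\mu_j-1]^{n_j}\}$, first separate into blocks of equal multiplicity (since $\lambda_i\pm1$ can only coincide with $\mu_j\pm1$ when $m_i=n_j$), reduce to multiplicity $1$, and then run the permutation/telescoping argument of \eqref{eq: isospec 1}--\eqref{eq: isospec 4} to derive a contradiction unless some $\lambda_i=\mu_j$, followed by induction on the block size to conclude that the two Cayley spectra coincide. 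The only adaptation needed is that the two spectra now come from $(G_1,S_1)$ and $(G_2,S_2)$ rather than from a graph and its sum-version, but the argument used \emph{only} the abstract form of the two multisets and never the specific origin of the eigenvalues, so it transfers without change. I expect this block-matching-plus-telescoping step to be the sole real obstacle; the other two cases are routine inversions.
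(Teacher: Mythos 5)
Your proposal is correct and follows essentially the same route as the paper, which simply states that the proof is analogous to that of Theorem~\ref{thm: isosp X,X+}: the cases $T=S$ and $T=S\cup\{e\}$ are direct inversions via Proposition~\ref{prop: spec bicayleys}, and the case $T=\{e\}$ reuses the block-matching and telescoping argument, which, as you correctly note, depends only on the abstract form of the two multisets. Your explicit handling of the merging of the $[0]^{|G|}$ and $[-1]^{|G|}$ blocks is a welcome bit of extra care that the paper leaves implicit.
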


\begin{proof}
This proof is analogous to that of Theorem \ref{thm: isosp X,X+}, considering the graphs
$MX^{*_1}(G_1;S_1,T_1)$ and $MX^{*_2}(G_2;S_2,T_2)$ for $T_i \in\{\{e_i\},S_i,S_i\cup\{e_i\}\}$ with $i=1,2$ instead of
the graphs $MX(G;S,T)$ and $MX^{+}(G;S,T)$, for $T\in\{\{e\},S,S \cup\{e\}\}$.
Finally, we have that
$|G_1| = |G_2|$ and $|S_1| = |S_2|$,
since isospectral graphs have the same number of vertices (being the total number of eigenvalues) and the same
degree of regularity (being the principal eigenvalue).
\end{proof}

We now illustrate the previous result.
\begin{exam}
Let 
$G_1=\Z_{16}$ and $G_2=\Z_4\times\Z_4$
and consider the subsets
\begin{gather*}
    S_1=\{1,2,4,5,9,10,12,13\} \subset \Z_{16}, \\ 
    S_2=\{(0,1),(0,2),(1,0),(1,2),(2,1),(2,2),(3,1),(3,3)\} \subset \Z_4 \times \Z_4.
\end{gather*}

Let $\G_i=X(G_i,S_i)$ and $\G_i^+ = X^+(G_i,S_i)$ for $i=1,2$.
All $\G_1, \G_1^+, \G_2, \G_2^+$ are connected 8-regular graphs of 16 vertices. Since $S_1$ and $S_2$ are not symmetric, the graphs $\G_1$ and $\G_2$ are directed. 
In fact, since 
	$$S_1 \cap (-S_1) = \{4,12\} \ne \varnothing \qquad \text{and} \qquad S_2 \cap (-S_2) = \{(0,2),(2,2)\} \ne \varnothing,$$ 
the graphs are mixed (see Section 2 of \cite{PV3}).
These graphs are also loopless, since the identities $0$ and $(0,0)$ are not in the connection sets. However, $S_1$ and $S_2$ are normal, since $G_1$ and $G_2$ are abelian, and hence $\G_1^+$ and $\G_2^+$ are undirected. Notice that $\G_1^+$ has loops at vertices $1, 2, 5, 6, 9, 10, 13$ and $14$ and that $\G_2^+$ has loops at vertices $(0,1), (0,3), (1,1), (1,3), (2,1), (2,3), (3,1)$ and $(3,3)$. 
All of this implies that 
    $$\G_1 \not \simeq \G_1^+ \qquad \text{and} \qquad \G_2 \not \simeq \G_2^+.$$

Now, we study the spectra of these graphs.
Since both groups are abelian, the spectrum of the Cayley (sum) graphs can be computed using characters.

\noindent ($a$) The irreducible characters of $\Z_{16}$ are, for each $x \in \Z_{16}$, given by 
	$$ \chi_k(x) = e^{\frac{2\pi i k x}{16}}, \qquad k=0,1,\dots,15.$$
Thus, the eigenvalues of the Cayley graph $\G_1=X(G_1,S_1)$ are given by
	$$ \lambda_k = \chi_k(S_1)  = \sum_{s\in S_1} e^{\frac{2\pi i k s}{16}}$$ 
and hence 
	$$ \lambda_k = e^{\frac{\pi i k}{8}} + e^{\frac{2\pi i k}{8}} + e^{\frac{4\pi i k}{8}} + e^{\frac{5\pi i k}{8}} + e^{\frac{9\pi i k}{8}} + e^{\frac{10\pi i k s}{8}}+ e^{\frac{12 \pi i k s}{8}} + e^{\frac{13\pi i k s}{8}}, $$  
for $k=0,1,\dots,15$.
Thus, after some computations we obtain the spectrum of 
$\G_1$ which is (complex and) given by
\begin{equation} \label{eq: spec G1}
 Spec(\G_1) =  \{[8]^1,[4i]^1,[-2+2i]^2,[0]^9,[-2-2i]^2,[-4i]^1\} \subset \Z[i]. 
\end{equation}

Furthermore, by $(b)$ in Lemma \ref{lem: eigenvalues X*GS}, we have that 
\begin{equation} \label{eq: spec G1+}
	Spec(\G_1^+) =  \{ [8]^1, [4]^1, [2\sqrt 2]^2,[0]^9, [-2\sqrt 2]^2, [-4]^1\} \subset \Z[\sqrt 2]
\end{equation}
is real, and hence $Spec(\G_1) \ne Spec(\G_1^+)$.

The irreducible characters of $\Z_4\times \Z_4$ are, for each $(x,y) \in \Z_4 \times \Z_4$, given by
$$ \chi_{a,b}(x,y) = e^{\frac{2\pi i (ax+by)}{4}}, \qquad a,b\in\Z_4.$$
Thus the eigenvalues of the Cayley graph $\G_2= X(G_2,S_2)$ are given by
    $$ \lambda_{a,b} = \chi_{a,b}(S_2)= \sum_{(s,t)\in S_2}e^{\frac{2\pi i (as+bt)}{16}}, \qquad a,b\in\Z_4. $$
With a little work, after some tedious but straightforward computations, one can obtain that the spectrum is thus given by
\begin{equation} \label{eq: spec G2}
 Spec(\G_2) = \{ [8]^1,[4i]^1,[-2+2i]^2,[0]^9,[-2-2i]^2,[-4i]^1 \} \subset \Z[i]. 
\end{equation}
Again, by ($b$) in Lemma \ref{lem: eigenvalues X*GS} we get that 
\begin{equation} \label{eq: spec G2+}
 Spec(\G_2^+) =  \{ [8]^1, [4]^1, [2\sqrt 2]^2,[0]^9, [-2\sqrt 2]^2, [-4]^1\} \subset \Z[\sqrt 2].
\end{equation}
In this way, by \eqref{eq: spec G1}--\eqref{eq: spec G2+}, we have that 
$$ Spec(\G_1) = Spec(\G_2) \qquad \text{and} \qquad Spec(\G_1^+)=Spec(\G_2^+),$$
and $Spec(\G_i) \ne Spec(\G_i^+)$ for $i=1,2$, confirming the fact that $\G_i$ is not isomorphic to $\G_i^+$ for $i=1,2$.

However, the spectra of $\G_i$ and $\G_i^+$ are related by the rule $\lambda_i^+ = \rm{sign}(Im(\lambda_i)) \|\lambda_i\|$, with the convention sign$(0)=1$. Or, in other terms, the non-principal eigenvalues of $\G_i^+$ are $\pm \| \lambda_i \|$ where $\lambda_i$ are the non-principal eigenvalues of $\G_i$, for $i=1,2$.
Moreover, the graphs  $\G_i$ and $\G_i^+$ are equienergetic since they have the same energy. The energy $E(G)$ of a graph $G$ is the sum of the absolute values of its eigenvalues, and hence 
	$$ E(\G_i) = E(\G_i^+) $$  
as one can see from \eqref{eq: spec G1}--\eqref{eq: spec G2+}.

We now show that $\{\G_1,\G_2\}$ and $\{\G_1^+,\G_2^+\}$ are not pairs of isomorphic graphs. First, notice that in the Cayley graph $X(\Z_{16},S_1)$, the connection set exhibits a specific translational invariant, namely
    $$ S_1+8 = S_1 \: \pmod{16}.$$
Consequently, for any vertex $v\in\Z_{16}$, it holds that
\begin{align*}
 N(v) & = \{w\in \Z_{16}\,:\, w-v\in S_1\} \\ 
      & = \{w\in\Z_{16}\,:\,w-(v+8)\in S_1-8=S_1\}=N(v+8).
\end{align*}
This implies that every vertex $v$ in $X(G_1, S_1)$ has a twin at $v+8$.

Conversely, in the case of the Cayley graph $X(\Z_4\times\Z_4, S_2)$ one can verify that 
	$$ S_2+g \ne S_2 $$ 
for every non-zero element $g\in \Z_4\times\Z_4$.
Thus, the mapping 
    $$v \mapsto N(v)$$ 
is injective, ensuring that no vertex in this graph possesses a twin. Since the existence of twin vertices is a structural property preserved under isomorphism, the graphs $X(G_1, S_1)$ and $X(G_2, S_2)$ are not isomorphic. An analogous argument holds for the Cayley sum graphs $X^+(G_1,S_1)$ and $X^+(G_2,S_2)$. 

Next, we give the pictures of the graphs. We have marked in blue the undirected edges in the mixed graphs $\G_1$ and $\G_2$ and in red the vertices in the connection sets $S_1$ and $S_2$.

\begin{figure}[H]
\centering
\begin{tikzpicture}[
    scale=3.2,
    vertex/.style={circle,draw,inner sep=1.5pt},
    every edge/.style={line width=0.2pt}
]

\begin{scope}[xshift=-1.2cm]

\def\n{16}

\tikzset{punto/.style={circle, fill=#1, inner sep=0pt, minimum size=4pt}}

\foreach \i in {0,...,15} {
	\pgfmathparse{(\i==1)||(\i==2)||(\i==4)||(\i==5)||(\i==9)||(\i==10)||(\i==12)||(\i==13)}
	\ifnum\pgfmathresult=1
	\node[punto=red] (\i) at ({360/\n * \i}:1) {};
	\else
	\node[punto=black] (\i) at ({360/\n * \i}:1) {};
	\fi
}

\node[vertex, label=right:{0}] at (0) {};

\node[vertex, label=above:{4}] at (4) {};

\node[vertex, label=left:{8}] at (8) {};

\node[vertex, label=below:{12}] at (12) {};

\foreach \i in {0,...,15} {
    \pgfmathtruncatemacro{\j}{mod(\i+1,\n)}
    \draw[->, thin, opacity=0.6] (\i) to (\j);

    \pgfmathtruncatemacro{\j}{mod(\i+2,\n)}
    \draw[->, thin, opacity=0.6] (\i) to (\j);

    \pgfmathtruncatemacro{\j}{mod(\i+4,\n)}
    \draw[-, blue, thin, opacity=0.6] (\i) to (\j);

    \pgfmathtruncatemacro{\j}{mod(\i+5,\n)}
    \draw[->, thin, opacity=0.6] (\i) to (\j);

    \pgfmathtruncatemacro{\j}{mod(\i+9,\n)}
    \draw[->, thin, opacity=0.6] (\i) to (\j);

    \pgfmathtruncatemacro{\j}{mod(\i+10,\n)}
    \draw[->, thin, opacity=0.6] (\i) to (\j);

    \pgfmathtruncatemacro{\j}{mod(\i+12,\n)}
    \draw[-, blue, thin, opacity=0.6] (\i) to (\j);

    \pgfmathtruncatemacro{\j}{mod(\i+13,\n)}
    \draw[->, thin, opacity=0.6] (\i) to (\j);
}

\node at (0,-1.45) {$\G_1=X(\mathbb Z_{16},S_1)$};

\end{scope}

\begin{scope}[xshift=1.2cm]

\def\n{16}

\tikzset{punto/.style={circle, fill=#1, inner sep=0pt, minimum size=4pt}}

\foreach \i in {0,...,15} {
	\pgfmathparse{(\i==1)||(\i==2)||(\i==4)||(\i==5)||(\i==9)||(\i==10)||(\i==12)||(\i==13)}
	\ifnum\pgfmathresult=1
	\node[punto=red] (\i) at ({360/\n * \i}:1) {};
	\else
	\node[punto=black] (\i) at ({360/\n * \i}:1) {};
	\fi
}

\node[vertex, label=right:{0}] at (0) {};
\node[vertex, label=above:{4}] at (4) {};
\node[vertex, label=left:{8}] at (8) {};
\node[vertex, label=below:{12}] at (12) {};

\foreach \i in {0,...,15} {
    \pgfmathtruncatemacro{\jA}{mod(1-\i+\n,\n)}
    \pgfmathtruncatemacro{\jB}{mod(2-\i+\n,\n)}
    \pgfmathtruncatemacro{\jC}{mod(4-\i+\n,\n)}
    \pgfmathtruncatemacro{\jD}{mod(5-\i+\n,\n)}
    \pgfmathtruncatemacro{\jE}{mod(9-\i+\n,\n)}
    \pgfmathtruncatemacro{\jF}{mod(10-\i+\n,\n)}
    \pgfmathtruncatemacro{\jG}{mod(12-\i+\n,\n)}
    \pgfmathtruncatemacro{\jH}{mod(13-\i+\n,\n)}
    
    \draw[-, thin, opacity=0.6] (\i) to (\jA);
    \draw[-, thin, opacity=0.6] (\i) to (\jB);
    \draw[-, thin, opacity=0.6] (\i) to (\jC);
    \draw[-, thin, opacity=0.6] (\i) to (\jD);
    \draw[-, thin, opacity=0.6] (\i) to (\jE);
    \draw[-, thin, opacity=0.6] (\i) to (\jF);
    \draw[-, thin, opacity=0.6] (\i) to (\jG);
    \draw[-, thin, opacity=0.6] (\i) to (\jH);
}

\foreach \i in {1, 2, 5, 6, 9, 10, 13, 14} {
    \pgfmathsetmacro{\ang}{360/\n * \i}
    
    \draw[-, thick, opacity=0.6] (\i) .. controls +(\ang-45:0.25cm) and +(\ang+45:0.25cm) .. (\i);
}
\node at (0, -1 - 0.5) {$\G_1^+ = X^{+}(\mathbb Z_{16},S_1)$};
\end{scope}
\end{tikzpicture}
\label{fig:isospectral-cayley}
\end{figure}

\vspace{-.5cm}

\begin{figure}[H]
\centering
\begin{tikzpicture}[
    scale=2,
    vertex/.style={circle,draw,inner sep=1.5pt},
    every edge/.style={line width=0.2pt},
]

\begin{scope}[xshift=-1.9cm]

\tikzset{punto/.style={circle, fill=#1, inner sep=0pt, minimum size=4pt}}

\foreach \x in {0,...,3} {
	\foreach \y in {0,...,3} {
		\pgfmathtruncatemacro{\k}{4*\x+\y}
		\pgfmathsetmacro{\angle}{360*\k/16}
		
		\pgfmathparse{
			(\x==0 && \y==1) ||
			(\x==0 && \y==2) ||
			(\x==1 && \y==0) ||
			(\x==1 && \y==2) ||
			(\x==2 && \y==1) ||
			(\x==2 && \y==2) ||
			(\x==3 && \y==1) ||
			(\x==3 && \y==3)
		}
		
		\ifnum\pgfmathresult=1
		\node[punto=red] (\x\y) at (\angle:1.6) {};
		\else
		\node[punto=black] (\x\y) at (\angle:1.6) {};
		\fi
	}
}

\node[vertex, label=above:{$(0,0)$}] at (00) {};
\node[vertex, label=above:{$(0,2)$}] at (02) {};
\node[vertex, label=above:{$(1,0)$}] at (10) {};
\node[vertex, label=above:{$(1,2)$}] at (12) {};
\node[vertex, label=above:{$(2,0)$}] at (20) {};
\node[vertex, label=below:{$(2,2)$}] at (22) {};
\node[vertex, label=below:{$(3,0)$}] at (30) {};
\node[vertex, label=below:{$(3,2)$}] at (32) {};

\foreach \x in {0,...,3} {
	\foreach \y in {0,...,3} {
		
		\foreach \a/\b in {0/2,2/2} {
			
			\pgfmathtruncatemacro{\xx}{mod(\x+\a,4)}
			\pgfmathtruncatemacro{\yy}{mod(\y+\b,4)}
			
			\ifnum\y<\yy
			\draw[blue, thin, opacity=0.6] (\x\y) -- (\xx\yy);
			\fi
			\ifnum\x<\xx
			\draw[blue, thin, opacity=0.6] (\x\y) -- (\xx\yy);
			\fi
			
		}
	}
}

\foreach \x in {0,...,3} {
	\foreach \y in {0,...,3} {
		\foreach \a/\b in {0/1,1/0,1/2,2/1,3/1,3/3} {
			
			\pgfmathtruncatemacro{\xx}{mod(\x+\a,4)}
			\pgfmathtruncatemacro{\yy}{mod(\y+\b,4)}
			
			\draw[->, thin, opacity=0.6] (\x\y) -- (\xx\yy);
		}
	}
}

\node at (0,-2.2) {$\G_2=X(\mathbb Z_4\times\mathbb Z_4,S_2)$};

\end{scope}

\begin{scope}[xshift=1.9cm]

\tikzset{punto/.style={circle, fill=#1, inner sep=0pt, minimum size=4pt}}

\foreach \x in {0,...,3} {
	\foreach \y in {0,...,3} {
		\pgfmathtruncatemacro{\k}{4*\x+\y}
		\pgfmathsetmacro{\angle}{360*\k/16}
		
		\pgfmathparse{
			(\x==0 && \y==1) ||
			(\x==0 && \y==2) ||
			(\x==1 && \y==0) ||
			(\x==1 && \y==2) ||
			(\x==2 && \y==1) ||
			(\x==2 && \y==2) ||
			(\x==3 && \y==1) ||
			(\x==3 && \y==3)
		}
		
		\ifnum\pgfmathresult=1
		\node[punto=red] (\x\y) at (\angle:1.6) {};
		\else
		\node[punto=black] (\x\y) at (\angle:1.6) {};
		\fi
	}
}

\node[vertex, label=above:{$(0,0)$}] at (00) {};
\node[vertex, label=above:{$(0,2)$}] at (02) {};
\node[vertex, label=above:{$(1,0)$}] at (10) {};
\node[vertex, label=above:{$(1,2)$}] at (12) {};
\node[vertex, label=above:{$(2,0)$}] at (20) {};
\node[vertex, label=below:{$(2,2)$}] at (22) {};
\node[vertex, label=below:{$(3,0)$}] at (30) {};
\node[vertex, label=below:{$(3,2)$}] at (32) {};

\foreach \x in {0,...,3} {
    \foreach \y in {0,...,3} {
        \foreach \a/\b in {0/1,0/2,1/0,1/2,2/1,2/2,3/1,3/3} {
            \pgfmathtruncatemacro{\xx}{mod(\a-\x+4,4)}
            \pgfmathtruncatemacro{\yy}{mod(\b-\y+4,4)}
            \draw[-, thin, opacity=0.6] (\x\y) -- (\xx\yy);
        }
    }
}

\node at (0,-2.2) {$\G_2^+=X^{+}(\mathbb Z_4\times\mathbb Z_4,S_2)$};

\foreach \x/\y in {0/1,0/3,1/1,1/3,2/1,2/3,3/1,3/3} {
    \pgfmathtruncatemacro{\k}{4*\x+\y}
    \pgfmathsetmacro{\angle}{360*\k/16}

    \draw[-, thick, opacity=0.6, looseness=12]
      (\x\y)
      to[out=\angle+45, in=\angle-45]
      (\x\y);
}

\end{scope}
\end{tikzpicture}
\label{fig:sum-cayley}
\end{figure}

Then, $\{ \G_1, \G_2\}$ and $\{\G_1^+, \G_2^+\}$ 
are two pairs of isospectral non-isomorphic graphs. 
Notice that we have $|G_1|=|G_2|=16$ and $|S_1|=|S_2|=8$.

\noindent $(b)$
Now, by Theorem \ref{thm: gen isosp}, we see that 
\begin{gather*}
\{MX(G_1;S_1,\{e\}), MX(G_2;S_2,\{e\}) \}, \\ 
\{ MX(G_1;S_1,S_1), MX(G_2;S_2,S_2) \}, \\ 
\{ MX(G_1;S_1,S_1 \cup \{e\}), MX(G_2;S_2, S_2 \cup \{e\}) \},    
\end{gather*} 
are three isospectral and non-isomorphic pairs of Cayley graphs, and
\begin{gather*}
\{MX^+(G_1;S_1,\{e\}), MX^+(G_2;S_2,\{e\}) \}, \\ 
\{MX^+(G_1;S_1,S_1), MX^+(G_2;S_2,S_2) \}, \\ 
\{MX^+(G_1;S_1,S_1 \cup \{e\}), MX^+(G_2;S_2, S_2 \cup \{e\}) \}, 
\end{gather*} 
are three isospectral and non-isomorphic pairs of Cayley sum graphs.

By Remark~\ref{rem: ring of integers}, the first three pairs of graphs have complex spectrum contained in $\Z[i]$ (Gaussian integers)
and the second three pairs of graphs have real spectrum contained in $\Z[\sqrt 2]$. So the graphs are not integral, but have the spectra contained in some ring of integers of quadratic fields.
\hfill $\diamond$ 
\end{exam}

\begin{rem}
In \cite{ChP}, we study Cayley graphs $X(G,S)$ and Cayley sum graphs $X^+(G,S)$, for groups $G$ with $|G|\le 15$. 
In particular, we give all non-isomorphic isospectral pairs of the form $\{ X^{*_1}(G_1,S_1), X^{*_2}(G_2,S_2)\}$. 
Thus, combining the results and tables in \cite{ChP} for Cayley (sum) graphs with Theorems~\ref{thm: isosp X,X+} and \ref{thm: gen isosp}, one can obtain all isospectral pairs of mirror di-Cayley graphs of the form 
	$$ \{MX^{*_1}(G_1,S_1,T_1), MX^{*_2}(G_2,S_2,T_2)\}, \qquad |G_1|, |G_2| \le 15.$$ 
In particular, we can obtain all such isospectral pairs with even and odd spectrum.
\end{rem}

\section{Even and odd isospectral Cayley graphs} \label{sec: even/odd isosp}
We now provide a nice source of examples of isospectral pairs of integral MDCGs of the form $MX(G;S,T)$ and $MX^+(G;S,T)$ with $T\in \mathcal{S}$. To this end, we consider as the group $G$ any finite commutative ring $R$ with identity.

Let $R$ be a finite commutative ring with identity. Then, it is well-known that $R$ has an Artin's decomposition (unique up to isomorphism and the order of the factors)
\begin{equation} \label{eq: Artin}
    R = R_1 \times \cdots \times R_s,    
\end{equation}  
where $R_i$ is a local ring with maximal ideal $\frak{m}_i$ for $i=1,\ldots, s$. Put               
    $$ r_i=|R_i| \qquad \text{and} \qquad                             m_i=|\frak{m}_i|$$ 
for any $i=1,\ldots, s$.
A similar decomposition as in \eqref{eq: Artin} holds for the group of units 
\begin{equation} \label{eq: Artin R*}
R^* = R_1^* \times \cdots \times R_s^*.    
\end{equation} 

Consider the graphs
    $$ G_R=X(R,R^*) \qquad \text{and} \qquad G_R^+ = X^+(R,R^*). $$ 
The decompositions \eqref{eq: Artin} and \eqref{eq: Artin R*}  imply that 
\begin{equation} \label{eq: GR kronecker}
G_R = G_{R_1} \otimes \cdots \otimes G_{R_s} \qquad \text{and} \qquad G_R^+ = G_{R_1}^+ \otimes \cdots \otimes G_{R_s}^+,
\end{equation} 
with $G_{R_i} = X(R_i,R_i^*)$, $G_{R_i}^+ = X^+(R_i,R_i^*)$ for $i=1,\ldots, s$, and where $\otimes$ denotes the Kronecker product (also direct product $\times$).

Next, we show that if $R$ is a finite commutative ring as in \eqref{eq: Artin} with at least one factor of even size and one of odd size, then the graphs          
\begin{equation} \label{eq: GRTs}   
    G_{R,T}:= MX(R;R^*,T) \qquad \text{and} \qquad G_{R,T}^+ :=MX^+(R;R^*,T),
\end{equation} 
with $T \in \mathcal{R} =\{ \{0\}, R^*, R^*\cup \{0\} \}$, are isospectral.
	
\begin{prop} \label{prop: isosp R}
Let $R$ be a finite commutative ring with identity with Artin's decomposition as in \eqref{eq: Artin}. 
Suppose that at least one $R_i$ has even size of the form $r_i= 2m_i$ 
and at least one $R_j$ is of odd size. Then, the mirror di-Cayley (sum) graphs 
    $$ \{ G_{R,T}, \;  G_{R,T}^+ \},$$ 
are integral isospectral (and generically non-isomporphic) for every choice of $T \in \mathcal{R}$, where $\mathcal{R} =\{ \{0\}, R^*, R^*\cup \{0\} \}$. 
Moreover, $\{ G_{R,R^*}, G_{R,R^*}^+ \}$ and $\{ G_{R,R^* \cup \{0\}}, G_{R,R^* \cup \{0\}}^+ \}$ are two pairs of even and odd graphs, respectively. 
\end{prop}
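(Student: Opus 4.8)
The plan is to push everything down to the two underlying Cayley graphs $G_R=X(R,R^*)$ and $G_R^+=X^+(R,R^*)$ and then transfer back to the family $\mathcal{F}$ using the earlier results. Throughout I view $R$ as the additive abelian group $(R,+)$ and take $S=R^*$; since $-1\in R^*$ we have $-R^*=R^*$, so $S$ is symmetric, hence normal because $(R,+)$ is abelian, and of course $0\notin R^*$. First I would dispose of the two easy assertions. Because $X(R,R^*)$ is integral (Proposition~\ref{prop: GRR integral}, which rests on \cite{KAYS}), Corollary~\ref{coro: integral}$(a)$ shows that every $G_{R,T}$ and $G_{R,T}^+$ with $T\in\mathcal{R}$ is integral, and Corollary~\ref{coro: integral}$(b)$ shows that $G_{R,R^*},G_{R,R^*}^+$ are even while $G_{R,R^*\cup\{0\}},G_{R,R^*\cup\{0\}}^+$ are odd. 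Thus the entire substance of the proposition is the isospectrality of each pair $\{G_{R,T},G_{R,T}^+\}$.

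By Theorem~\ref{thm: isosp X,X+} the pair $\{MX(R;R^*,T),MX^+(R;R^*,T)\}$ is isospectral for every $T\in\mathcal{R}$ as soon as the underlying Cayley graphs $G_R$ and $G_R^+$ are isospectral. Since $(R,+)$ is abelian and $R^*$ is symmetric with $0\notin R^*$, this last isospectrality will follow from Corollary~\ref{coro: X&X+ isosp}$(b)$ once I exhibit that $G_R$ and $G_R^+$ are both bipartite. This bipartiteness is the heart of the matter, and it is precisely where the even-size hypothesis enters.

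So the key claim is that $G_R$ and $G_R^+$ are bipartite. The factor $R_i$ with $r_i=2m_i$ has residue field $R_i/\frak{m}_i$ of order $r_i/m_i=2$, i.e.\ $R_i/\frak{m}_i\cong\ff_2$. I would first check that $G_{R_i}=X(R_i,R_i^*)$ is bipartite: the reduction map $R_i\to R_i/\frak{m}_i=\{0,1\}$ two-colours the vertices, sending units to $1$ and non-units to $0$, and any edge joins two elements whose difference is a unit, hence lies outside $\frak{m}_i$, so its endpoints receive different colours. Since $R_i$ has even size, $G_{R_i}=G_{R_i}^+$ by Lemma~3.1 of \cite{PV}, and this common graph is loopless. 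Finally, using the Kronecker decompositions \eqref{eq: GR kronecker}, namely $G_R=G_{R_1}\otimes\cdots\otimes G_{R_s}$ and $G_R^+=G_{R_1}^+\otimes\cdots\otimes G_{R_s}^+$, together with the elementary fact that a direct (Kronecker) product possessing at least one bipartite loopless factor is itself bipartite (colour $(v_1,\dots,v_s)$ by the class of $v_i$, noting that every edge crosses the bipartition in the $i$-th coordinate), I conclude that both $G_R$ and $G_R^+$ are bipartite.

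Putting these together yields the isospectrality of $\{G_{R,T},G_{R,T}^+\}$ for all $T\in\mathcal{R}$, and combined with the even/odd statements already noted this gives the two distinguished pairs. I expect the only delicate point to be the bipartiteness step: the real content is the observation that a local factor is bipartite exactly when its residue field is $\ff_2$ (equivalently $r_i=2m_i$), and that this colouring survives the Kronecker product. An equivalent, more computational route would bypass Corollary~\ref{coro: X&X+ isosp} and instead multiply the explicit local spectra \eqref{eq: Spec GR local}--\eqref{eq: Spec GR+ local} through the product formula \eqref{spec}: the even factor contributes the symmetric spectrum $\{[m_i]^1,[0]^{2(m_i-1)},[-m_i]^1\}$, and since multiplying a symmetric spectrum by a value $v$ gives the same multiset as multiplying by $-v$, the sign asymmetry between $Spec(G_{R_j})$ and $Spec(G_{R_j}^+)$ at the odd factors is absorbed, yielding $Spec(G_R)=Spec(G_R^+)$ directly. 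Finally, the odd-size factor plays no role in the isospectrality itself — without it one would merely have $G_R=G_R^+$ — but it is what makes the resulting pair genuinely different (generically non-isomorphic), whereas dropping the even-size factor would make the conclusion false.
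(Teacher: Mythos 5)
Your proposal is correct and follows the same overall skeleton as the paper's proof: integrality and the even/odd assertions come from Proposition~\ref{prop: GRR integral} and Corollary~\ref{coro: integral}, and the isospectrality of each pair $\{G_{R,T},G_{R,T}^+\}$ is reduced via Theorem~\ref{thm: isosp X,X+} (equivalently Corollary~\ref{coro: X&X+ isosp}) to the isospectrality of $G_R$ and $G_R^+$. The one place you genuinely diverge is at that last step: the paper simply cites Lemma~3.4 of \cite{PV} for the fact that, under the stated hypotheses on the Artin factors, $G_R$ and $G_R^+$ are different bipartite isospectral graphs, whereas you prove the needed facts yourself --- the residue-field reduction $R_i\to R_i/\frak{m}_i\cong\ff_2$ two-colours the even factor (here the hypothesis $r_i=2m_i$, not mere evenness of $r_i$, is exactly what you use, and correctly so), $G_{R_i}=G_{R_i}^+$ for that factor by Lemma~3.1 of \cite{PV}, and a Kronecker product with a bipartite loopless factor is bipartite, after which Corollary~\ref{coro: X&X+ isosp}$(b)$ applies since $(R,+)$ is abelian and $R^*=-R^*$ with $0\notin R^*$. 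Your alternative computational route (multiplying the local spectra \eqref{eq: Spec GR local}--\eqref{eq: Spec GR+ local} through \eqref{spec} and letting the symmetric spectrum of the even factor absorb the sign asymmetry at the odd factors) is also sound and is essentially how the cited lemma of \cite{PV} is established. What your version buys is self-containment and a clear identification of where each hypothesis enters --- including the correct observation that the odd factor is only needed to make the pair genuinely distinct rather than to force isospectrality; what the paper's version buys is brevity by outsourcing the bipartite-isospectral fact to \cite{PV}.
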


\begin{proof}
Consider the unitary Cayley (sum) graphs associated to the ring $R$, that is $G_R=X(R,R^*)$ and $G_R^+ = X^+(R,R^*)$. 
By Lemma 3.4 of \cite{PV}, $G_R$ and $G_R^+$ are different (bipartite) isospectral graphs, which are generically non-isomorphic. 

Now, by Theorem~\ref{thm: isosp X,X+} or Corollary~\ref{coro: X&X+ isosp}, we have that each of the three pairs of mirror di-Cayley (sum) graphs 
    $\{G_{R,T}, \,  G_{R,T}^+\}$ 
corresponding to $T=\{0\}$, $T=R^*$ or $T=R^*\cup \{0\}$ are isospectral (and generically non-isomorphic).

By Proposition \ref{prop: GRR integral}, we know that the spectrum of $G_{R,T}^*$ are integral for any $T \in \mathcal{R}$, and that $G^*_{R,R^*}$ is even and $G^*_{R,R^* \cup \{0\}}$ is odd.
\end{proof}

Examples of non-isomorphic local rings of size $p^n$ are 
\begin{equation}  \label{eq: some local rings}
    \Z_{p^n}, \qquad \mathbb{F}_{p^m}[x]/(x^t) \qquad \text{and} \qquad GR(p^m, t)
\end{equation} 
where $p$ is a prime and $n=mt$.

\begin{coro} \label{coro: R1 R1'}
Let $R$ be a finite commutative ring with identity with Artin's decomposition $R=R_1 \times R_2 \times R_3 \times \cdots \times R_s$, where $R_1$ is some of the rings in \eqref{eq: some local rings} for $p=2$, $R_2$ is some of the rings in \eqref{eq: some local rings} for $p$ odd, and 
$R_3, \ldots, R_s$ are arbitrary local rings.
Then, for each $T \in \{ \{0\}, R^*, R^* \cup \{0\}\}$, we have the pair 
    $$ \{G_{R, T}, \; G^+_{R, T} \} $$
of integral isospectral (generically non-isomporphic) MDCGs.  
Moreover, $G_{R,R^*}$ and $G_{R,R^*}^+$ are even graphs and $G_{R,R^* \cup \{0\}}$ and $G_{R,R^* \cup \{0\}}^+$ are odd graphs.
\end{coro}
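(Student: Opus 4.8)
The plan is to read the statement off directly from Proposition~\ref{prop: isosp R}, whose hypotheses are exactly that $R$ admit an Artin decomposition \eqref{eq: Artin} with at least one local factor of even size of the form $r_i = 2m_i$ and at least one local factor of odd size. Thus the entire task reduces to checking that the prescribed decomposition $R = R_1 \times R_2 \times R_3 \times \cdots \times R_s$ meets these two requirements; once it does, the integrality, the isospectrality of $\{G_{R,T}, G_{R,T}^+\}$, and the even/odd dichotomy are all inherited verbatim.

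First I would verify that $R_1$ supplies the required even factor. Each ring listed in \eqref{eq: some local rings} with $p=2$ is local of order a power of $2$, hence of even size: $|\Z_{2^n}| = 2^n$, and the two remaining families have order $2^{mt}$. What actually matters is the sharper shape $r_1 = 2m_1$, i.e.\@ that the residue field $R_1/\mathfrak{m}_1$ equals $\mathbb{F}_2$; for $R_1 = \Z_{2^n}$ one has $\mathfrak{m}_1 = (2)$ and $R_1/\mathfrak{m}_1 \cong \mathbb{F}_2$, so $m_1 = 2^{n-1}$ and $r_1 = 2m_1$, and similarly in the residue-field-$\mathbb{F}_2$ instances of the other two families. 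This is precisely the structural feature behind the isospectrality in Proposition~\ref{prop: isosp R}, since by the Kronecker decomposition \eqref{eq: GR kronecker} it forces $G_R = X(R,R^*)$ to be bipartite and hence $Spec(G_R)$ to be symmetric. Next I would verify that $R_2$ supplies the odd factor: each ring in \eqref{eq: some local rings} taken with $p$ an odd prime has order a power of $p$, hence odd.

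With one even factor of the form $r_1 = 2m_1$ and one odd factor $R_2$ in hand (the factors $R_3, \ldots, R_s$ being irrelevant to these two conditions), the hypotheses of Proposition~\ref{prop: isosp R} are satisfied. Applying that proposition yields, for each $T \in \{\{0\}, R^*, R^*\cup\{0\}\}$, that $\{G_{R,T}, G_{R,T}^+\}$ is an integral isospectral pair, generically non-isomorphic; and Proposition~\ref{prop: GRR integral} (equivalently Corollary~\ref{coro: integral}) then gives that $G_{R,R^*}$ and $G_{R,R^*}^+$ are even while $G_{R,R^*\cup\{0\}}$ and $G_{R,R^*\cup\{0\}}^+$ are odd. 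There is no genuinely hard step here, as the corollary is a specialization; the only point deserving care is the residue-field computation of the first paragraph, since it is the condition $r_1 = 2m_1$ --- and not the mere evenness of $|R_1|$ --- that guarantees bipartiteness and hence drives the isospectrality. I would therefore record explicitly, for each of the three model rings in \eqref{eq: some local rings} with $p=2$, which parameter choices realize residue field $\mathbb{F}_2$, so that the invocation of Proposition~\ref{prop: isosp R} is unambiguous.
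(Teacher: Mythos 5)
Your proposal is correct and follows the same route as the paper, whose entire proof of this corollary is the single line ``Automatic by Proposition~\ref{prop: isosp R}.'' The one place you go beyond the paper is in checking the hypothesis $r_1=2m_1$ rather than mere evenness of $|R_1|$, and your observation there is right: among the rings in \eqref{eq: some local rings} with $p=2$, only those with residue field $\mathbb{F}_2$ (e.g.\@ $\Z_{2^n}$, or $\mathbb{F}_{2^m}[x]/(x^t)$ and $GR(2^m,t)$ with $m=1$, resp.\@ $t=1$) satisfy $r_1=2m_1$, so your insistence on recording the admissible parameter choices is a genuine refinement of a point the paper's one-line proof glosses over.
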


\begin{proof} 
Automatic by Proposition \ref{prop: isosp R}.
\end{proof}

Next, we give the minimal examples provided by the previous corollary, also illustrating Corollaries \ref{coro: integral} and \ref{coro: symmetric}.

\begin{exam}  \label{exam: Z4Z3}
The smallest ring that we can take in Corollary \ref{coro: R1 R1'} is 
   $$ R=\Z_4 \times \Z_3$$ 
(or $R'= \Z_2[x]/(x^2) \times \Z_3$, or $R''= GR(2^2,2) \times \Z_3$) of order 12 (which is obviously $\Z_{12}$). 
For each choice of $T \in \mathcal{R}$, with $\mathcal{R} = \{\{0\}, R^*, R^*\cup \{0\}\}$, we have the pair 
$$\{ MX(\Z_4 \times  \Z_3, \Z_4^* \times \Z_3^*, T), MX^+(\Z_4 \times  \Z_3, \Z_4^* \times \Z_3^*, T) \} $$
of $8$-regular isospectral graphs of order 24. 

Notice that $G_{\Z_4}=G_{\Z_4}^+$ 
(see the comment previous to Corollary~\ref{coro: SpecGRR*}), but $G_{\Z_3} \not \simeq G_{\Z_3}^+$ since, by Proposition 3.2 in \cite{PV}, the graphs $G_{\Z_3}$ and $G_{\Z_3}^+$ are not-isospectral.
Thus, by \eqref{eq: GR kronecker}, we have that 
\begin{equation} \label{eq: G4G3}
    G_R = G_{\Z_4} \otimes G_{\Z_3} \qquad \text{and} \qquad G_R^+ = G_{\Z_4}^+ \otimes G_{\Z_3}^+
\end{equation} 
and hence $G_R \not \simeq G_R^+$. 
This implies that the MDCGs are also non-isomorphic, i.e.\@ 
    $$MX(R; R^*, T) \not \simeq MX^+(R; R^*, T)$$ 
for every $T \in \mathcal{R}$. 

We now check the isospectrality result by giving the spectra of all these graphs explicitly.
By \eqref{eq: Spec GR local} and \eqref{eq: Spec GR+ local}, with $r=4$ and $m=2$, we have that 
\begin{gather*}
Spec(G_{\Z_4}) = Spec(G_{\Z_4}^+) = \{ [2]^1, [0]^2, [-2]^1 \}, \\
Spec(G_{\Z_3}) = \{ [2]^1, [-1]^2 \} \qquad \text{and} \qquad Spec(G_{\Z_3}^+) = \{ [2]^1, [1]^1, [-1]^1 \}.   
\end{gather*}

Now, to compute the spectrum of $G_R$ and $G_R^+$ we use \eqref{eq: G4G3}, the above computations and \eqref{spec}, obtaining
\begin{align*}
    Spec(G_R) & = \{ [4]^1, [0]^2, [-4]^1, [-2]^2, [0]^4, [2]^2 \},\\
    Spec(G_R^+) & = \{ [4]^1, [2]^1, [-2]^1, [0]^6, [-4]^1, [-2]^1, [2]^1 \}, 
\end{align*}
and thus 
	$$ Spec(G_R) = \{ [4]^1, [2]^2, [0]^6, [-2]^2, [-4]^1 \} = Spec(G_R^+) .$$

From this and Proposition \ref{prop: spec bicayleys} we get the spectra of all the mirror di-Cayley (sum) graphs associated to $G_R$; namely,
\begin{align*}
   Spec(G_{R,0}^*) &= \{ [5]^1, [3]^3, [1]^8, [-1]^8, [-3]^3, [-5]^1 \},\\
   Spec(G_{R,R^*}^*) &= \{ [8]^1, [4]^2, [0]^{18}, [-4]^2, [-8]^1 \},
   \\
   Spec(G_{R,R^*\cup \{0\}}^*) &= \{ [9]^1, [5]^2, [1]^6, [-3]^2, [-7]^1, [-1]^{12}\}. 
\end{align*}
From this, we can see that $G_{R,R^*}^*$ has even and symmetric spectrum (hence bipartite) and $G_{R,R^* \cup \{0\}}^*$ has odd and non-symmetric spectrum (hence non-bipartite). The graph $G_{R,0}^*$ is odd, because in this case $G_{R}^*$ is even. 
\hfill $\diamond$ 
\end{exam}

We are finally in a position to answer affirmatively the question posed in the introduction. 
We show that there exist pairs of isospectral Cayley (sum) graphs of the form $\{ X(G,S), X^+(G,S)\}$ that both have even spectrum or both have odd spectrum for a particular choice of the pair $(G,S)$.

\begin{thm} \label{thm: main}
There are integral isospectral pairs of unitary Cayley (sum) graphs $\{ X(R,R^*), X^+(R,R^*)\}$, with $R$ a finite commutative ring with identity, such that:

\noindent $(a)$ they are both bipartite with even symmetric spectrum; or

\noindent $(b)$ they are both non-bipartite with odd non-symmetric spectrum.
\end{thm}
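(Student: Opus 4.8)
The plan is to produce the two required isospectral pairs as the even and odd members of the mirror di-Cayley (sum) family attached to a single well-chosen ring, and then to extract the four properties (isospectrality, even/odd parity, symmetric/non-symmetric spectrum, bipartite/non-bipartite) from the results already in place. First I would fix a finite commutative ring $R$ whose Artin decomposition \eqref{eq: Artin} contains at least one local factor of even size of the form $r_i=2m_i$ and at least one factor of odd size; the smallest such ring is $R=\Z_4\times\Z_3$ of Example~\ref{exam: Z4Z3}. By Proposition~\ref{prop: isosp R} (equivalently Corollary~\ref{coro: R1 R1'}), for this $R$ and every $T\in\{\{0\},R^*,R^*\cup\{0\}\}$ the pair $\{MX(R;R^*,T),MX^+(R;R^*,T)\}$ is integral and isospectral, with $MX^*(R;R^*,R^*)$ even and $MX^*(R;R^*,R^*\cup\{0\})$ odd. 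By Proposition~\ref{prop: cayley structure} each of these MDCGs is a Cayley (sum) graph over $R\times\Z_2$, so the pairs below are genuinely of the advertised form $\{\G,\G^+\}$; in particular the even pair can even be read off directly from the unitary Cayley graphs $\{X(R,R^*),X^+(R,R^*)\}$, which by Example~\ref{exam: Z4Z3} are themselves isospectral with the even symmetric spectrum $\{[4]^1,[2]^2,[0]^6,[-2]^2,[-4]^1\}$.

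The one extra ingredient I need is the symmetry of the underlying unitary Cayley spectrum. Since $R$ has a factor of even size and a factor of odd size, the graphs $X(R,R^*)$ and $X^+(R,R^*)$ are bipartite (Lemma~3.4 of~\cite{PV}), and hence $Spec(X^*(R,R^*))$ is symmetric. For part $(a)$ I would then take $\G_E=MX(R;R^*,R^*)$ and $\G_E^+=MX^+(R;R^*,R^*)$: these are isospectral and even by Proposition~\ref{prop: isosp R}, and by Corollary~\ref{coro: symmetric}$(a)$ (applied with $S'=S=R^*$) their common spectrum is symmetric. As both graphs are regular and undirected ($R^*$ is symmetric and normal), a symmetric spectrum is equivalent to bipartiteness, so $\{\G_E,\G_E^+\}$ is an even, symmetric, bipartite isospectral pair, establishing $(a)$.

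For part $(b)$ I would take $\G_O=MX(R;R^*,R^*\cup\{0\})$ and $\G_O^+=MX^+(R;R^*,R^*\cup\{0\})$, which is legitimate because $0\notin R^*$. By Proposition~\ref{prop: isosp R} they are isospectral and, this time, odd. Since $Spec(X^*(R,R^*))$ is symmetric and $0\notin R^*$, Corollary~\ref{coro: symmetric}$(b)$ shows that the spectrum of $\G_O^*$ is not symmetric, so the (regular, undirected) graphs $\G_O,\G_O^+$ are non-bipartite. Thus $\{\G_O,\G_O^+\}$ is an odd, non-symmetric, non-bipartite isospectral pair, establishing $(b)$. To confirm the bookkeeping concretely I would point to Example~\ref{exam: Z4Z3}, where for $R=\Z_4\times\Z_3$ one has $Spec(\G_E^*)=\{[8]^1,[4]^2,[0]^{18},[-4]^2,[-8]^1\}$ and $Spec(\G_O^*)=\{[9]^1,[5]^2,[1]^6,[-1]^{12},[-3]^2,[-7]^1\}$, exhibiting precisely the even-symmetric and odd-non-symmetric patterns.

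I expect the symmetry input to be the crux: the entire contrast between $(a)$ and $(b)$ is channelled through the single fact that $Spec(X^*(R,R^*))$ is symmetric, which is exactly what lets Corollary~\ref{coro: symmetric} force symmetry of the even pair while breaking it for the odd one. This fact rests on the bipartiteness of $X(R,R^*)$ and $X^+(R,R^*)$, and hence on the hypothesis that $R$ has both an even-size and an odd-size local factor; the only other point needing care is that $0\notin R^*$, which both legitimizes the di-connection set $R^*\cup\{0\}$ and is precisely the hypothesis required by Corollary~\ref{coro: symmetric}$(b)$.
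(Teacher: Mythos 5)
Your proposal is correct and follows essentially the same route as the paper's own proof: it selects a ring with an even-size and an odd-size local Artin factor, invokes Proposition~\ref{prop: isosp R} (Corollary~\ref{coro: R1 R1'}) for integrality, isospectrality and even/odd parity of the pairs $\{G_{R,R^*},G^+_{R,R^*}\}$ and $\{G_{R,R^*\cup\{0\}},G^+_{R,R^*\cup\{0\}}\}$, then Corollary~\ref{coro: symmetric} for the symmetric/bipartite versus non-symmetric/non-bipartite dichotomy, and Proposition~\ref{prop: cayley structure} to realize everything as Cayley (sum) graphs over $R\times\Z_2$. Your only deviation is cosmetic and in fact helpful: you make explicit (via Lemma~3.4 of~\cite{PV}) the bipartiteness of $X^*(R,R^*)$ that justifies the symmetry hypothesis in Corollary~\ref{coro: symmetric}, a step the paper leaves implicit, and you verify the bookkeeping on the concrete example $R=\Z_4\times\Z_3$.
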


\begin{proof}
Let $R$ be a finite commutative ring with identity with Artin's decomposition 
    $$R=R_1 \times R_2 \times R_3 \times \cdots \times R_s,$$ 
where $R_1$ is (for instance) some of the rings in \eqref{eq: some local rings} for $p=2$, $R_2$ is (for instance) some of the rings in \eqref{eq: some local rings} for $p$ odd, and 
$R_3, \ldots, R_s$ are arbitrary local rings.
Then, by Corollary \ref{coro: R1 R1'}, for each $T$ in the set $\{ \{0\}, R^*, R^* \cup \{0\}\}$, we have the pair $ \{ G_{R, T}, \; G^+_{R, T} \} $
of integral isospectral MDCGs (generically non-isomorphic).   
Moreover, $G_{R,R^*}$ and $G_{R,R^*}^+$ are even graphs and $G_{R,R^* \cup \{0\}}$ and $G_{R,R^* \cup \{0\}}^+$ are odd graphs.

Also, by Corollary \ref{coro: symmetric}, we have that $G_{R,R^*}$ and $G_{R,R^*}^+$ have symmetric spectrum and, hence, are bipartite graphs; and that $G_{R,R^* \cup \{0\}}$ and $G_{R,R^* \cup \{0\}}^+$ have non-symmetric spectrum and, hence, are non-bipartite graphs. 

Finally, by Proposition \ref{prop: cayley structure}, the mirror di-Cayley (sum) graphs $G_{R, T}$ and $G^+_{R, T}$ are Cayley graphs over the abelian group $R \times \Z_2$. 
\end{proof}

Our method of constructing MDCGs pairs of isospectral graphs with even or odd spectrum (as in Proposition \ref{prop: isosp R} or Theorem \ref{thm: main}), uses a finite commutative ring $R$
with at least one local factor of even characteristic (size) and one of odd characteristic (size). 
As Cayley graphs, they are defined over the group $G=R \times \Z_2$, which is also a ring in Artin's form. Hence, this process can be iterated to obtain infinite sequences of such isospectral pairs.

\begin{coro}
Let $R$ a finite commutative ring with identity and let $\{G_{R,S}, G^+_{R,S} \}$ be an isospectral pair of Cayley (sum) graphs as obtained in Proposition \ref{prop: isosp R} or in Corollary~\ref{coro: R1 R1'}. 
For each $n \in \N$, we have the pair  
    $$\{ \G_{n} =  X(G \times \Z_2^n, S \times \Z_2^n ), \quad \G_n^+ = X^+(G \times \Z_2^n, S \times \Z_2^n) \}, $$ 
of isospectral Cayley (sum) graphs with integral even spectrum.
\end{coro}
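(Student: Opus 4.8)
The plan is to realize the passage $n \mapsto n+1$ as a single application of the mirror di-Cayley construction with di-connection set equal to the connection set, and to invoke the results of the previous sections to show that this one step preserves integrality, isospectrality and evenness; the corollary then follows by induction on $n$.

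First I would fix the starting data. The even isospectral pair furnished by Proposition \ref{prop: isosp R} (resp.\ Corollary \ref{coro: R1 R1'}) is the pair $\{G_{R,R^*}, G_{R,R^*}^+\}$, which by Proposition \ref{prop: cayley structure} is a Cayley (sum) pair: writing $G = R \times \Z_2$ and $S = R^* \times \Z_2$, one has $G_{R,R^*} = X(G,S)$ and $G_{R,R^*}^+ = X^+(G,S)$. This pair is integral and isospectral by Proposition \ref{prop: isosp R}, and even by Corollary \ref{coro: integral}. The key structural identity I would use repeatedly is the special case $MX^*(H;U,U) = X^*(H \times \Z_2, U \times \Z_2)$ of Proposition \ref{prop: cayley structure}, valid for any group $H$ and subset $U \subset H$.

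Next I would set up the recursion. Put $G_0 = G$, $S_0 = S$, and for $k \ge 0$ set $G_{k+1} = G_k \times \Z_2$ and $S_{k+1} = S_k \times \Z_2$. The identity above yields
$$ X^*(G_{k+1}, S_{k+1}) = MX^*(G_k; S_k, S_k), $$
and the natural isomorphisms $(G \times \Z_2^k) \times \Z_2 \cong G \times \Z_2^{k+1}$ and $(S \times \Z_2^k) \times \Z_2 \cong S \times \Z_2^{k+1}$ give $G_n = G \times \Z_2^n$ and $S_n = S \times \Z_2^n$, so that $\G_n = X(G \times \Z_2^n, S \times \Z_2^n)$ and $\G_n^+ = X^+(G \times \Z_2^n, S \times \Z_2^n)$, exactly as in the statement.

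Finally I would run the induction on $n$, taking the starting pair as the base case. Assuming $\{X(G_k,S_k), X^+(G_k,S_k)\}$ is integral, isospectral and even, Corollary \ref{coro: integral}$(a)$ shows $MX^*(G_k;S_k,S_k) = X^*(G_{k+1},S_{k+1})$ is integral; Corollary \ref{coro: integral}$(b)(ii)$ shows its spectrum is even; and Theorem \ref{thm: isosp X,X+}, applied with di-connection set $T = S_k$, shows $\{MX(G_k;S_k,S_k), MX^+(G_k;S_k,S_k)\}$ is isospectral precisely because $\{X(G_k,S_k), X^+(G_k,S_k)\}$ is. This advances the induction and proves the corollary. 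The only delicate point is purely organizational, namely unwinding the iterated direct products into the closed forms $G \times \Z_2^n$ and $S \times \Z_2^n$ so that the recursion collapses to the stated formula; all the spectral content is supplied verbatim by Theorem \ref{thm: isosp X,X+} and Corollary \ref{coro: integral}, so no genuine computation remains.
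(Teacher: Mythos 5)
Your proposal is correct and follows essentially the same route as the paper, whose proof is just the one-line remark that the result follows from Proposition \ref{prop: isosp R} (or Corollary \ref{coro: R1 R1'}) together with repeated application of Proposition \ref{prop: cayley structure}; you have simply made the induction explicit and named the results (Corollary \ref{coro: integral} and Theorem \ref{thm: isosp X,X+}) that propagate integrality, evenness and isospectrality through each step $X^*(G_{k+1},S_{k+1})=MX^*(G_k;S_k,S_k)$. The only discrepancy is a harmless index shift in the base case ($G=R\times\Z_2$ versus $G=R$), which the paper's own statement leaves ambiguous.
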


\begin{proof}
This is a consequence of Proposition \ref{prop: isosp R} or Corollary \ref{coro: R1 R1'} and the repetitive use of Proposition \ref{prop: cayley structure}. 
\end{proof}

\begin{exam}
Consider the ring $\Z_4\times \Z_3$. By Example \ref{exam: Z4Z3} and the previous corollary, the pair of graphs $\{G_{R,R^*}(n), G^+_{R,R^*}(n) \}$, where
$$ G_{R,R^*}(n) = MX \big( \Z_4 \times  \Z_3 \times \Z_2^n, \Z_4^* \times \Z_3^* \times (\Z_2^*)^n,\Z_4^* \times \Z_3^* \times (\Z_2^*)^n \big),$$
are integral isospectral with even spectrum, for every natural number $n$.
\hfill $\diamond$ 
\end{exam}

\subsection*{Acknowledgements}
The second author wishes to thank the Guangdong Technion Israel Institute of Technology (GTIIT) in Shantou, China, for its hospitality, facilities, and friendly atmosphere, during his academic visit on winter 2025, where this work was almost finished.


\begin{thebibliography}{XXX}
	\bibitem{AV}	
        \textsc{A.\@ Abdollahi, E.\@ Vatandoost}.  
        \textit{Which Cayley graphs are integral?} 
        Electron.\@ J.\@ Combin.\@ \textbf{16:1}, \#R122, 17 pages (2009).

        \bibitem{ABetal} 
        \textsc{R.\@ Akhtar, M.\@ Boggess, T.\@ Jackson-Henderson, I.\@ Jiménez, R.\@ Karpman, A.\@ Kinzel, D.\@ Pritikin}.
        \textit{On the unitary Cayley graph of a finite ring}. Electron.\@ J.\@ Combin.\@ \textbf{16:1}, Res. Paper 117, 13 pp (2009).
        
   		\bibitem{BK}
		\textsc{S.\@ Barik, D.\@ Kalita, S.\@ Pati, G.\@ Sahoo}.
		\textit{Spectra of graph resulting from various graph operations and products: a survey}.
		Spec.\@ Matrices \textbf{6}, 323--342 (2018).
		
		\bibitem{BM}
		\textsc{W.\@ G.\@ Bridges, R.\@ A.\@ Mena}. 
		\textit{Rational circulants with irrational eigenvalues}.
		Linear Algebra and its Applications \textbf{40}, 1--27 (1981).
	
        \bibitem{ChaFe} 
        \textsc{G.\@ Chapuy, V.\@ Féray}. 
        \textit{A note on a Cayley graph of $\mathbb{S}_n$}. arXiv:1202.4976, 2012.
         
		\bibitem{ChP} 
		\textsc{P.\@ M.\@ Chiapparoli, R.\@ A.\@ Podestá}.
		\textit{All Cayley (sum) graphs for groups up to order 15: structure, spectrum and energy}.
		arXiv, april 2026.

      	\bibitem{Csi} 
      	\textsc{P.\@ Csikvári}. 
      	\textit{Integral trees of arbitrarily large diameter}. 
      	J.\@ Algebraic Combin.\@ \textbf{32}, 371--377 (2010).
        
		\bibitem{Cvetkovic} 
		\textsc{D.\@ M.\@ Cvetkovic, M.\@ Doob, H.\@ Sachs}.
		\textit{Spectra of graphs. Theory and applications}. 
		Leipzig: J.\@ ed.\@ A.\@ Barth Verlag (1995). 
		
		\bibitem{GL1}
		\textsc{H.\@ Gao, Y.\@ Luo}. 
		\textit{The isomorphism problem for bi-Cayley graphs}. 
		Journal of Combinatorial Optimization \textbf{19:2}, 150--161 (2010). 

		\bibitem{GL2}
		\textsc{X.\@ Gao, Y.\@ Luo}.
		\textit{The spectrum of semi-Cayley graphs over abelian groups}.
		Linear Algebra Appl.\@ \textbf{432:11}, 2974--2983 (2010).

        \bibitem{GS}
        \textsc{C.\@ Godsil, P.\@ Spiga}.
        \textit{Rationality conditions for the eigenvalues of normal finite Cayley graphs}.
        Preprint, arXiv:1402.5494 (2014).
		
        \bibitem{Guo}       
        \textsc{W.\@ Guo, D.\@ V.\@ Lytkina, V.\@ D.\@ Mazurov, D.\@ O.\@ Revin}.  
        \textit{Integral Cayley graphs}. 
        Algebra and Logic \textbf{58:4}, 297--305 (2019).
       
        \bibitem{HS} 
        \textsc{F.\@ Harary, A.\@ J.\@ Schwenk}. 
        \textit{Which graphs have integral spectra?} 
        Graphs and Combinatorics, Lect.\@ Notes Math.\@ \textbf{406}, Springer-Verlag, Berlin, 45--51 (1974).
        
		\bibitem{KAYS} 
		\textsc{D.\@ Kiani, M.\@ M.\@ Haji Aghaei, M.\@ Yotsanan, B.\@ Suntornpoch}. 
		\textit{Energy of unitary Cayley graphs and
		gcd-graphs}. Linear Algebra Appl.\@ \textbf{435:6}, 1336--1343 (2011).
	
        \bibitem{KS07}
        \textsc{W.\@ Klotz, T.\@ Sander}.  
        \textit{Some properties of unitary Cayley graphs}. 
        Electron.\@ J.\@ Combin.\@ \textbf{14:1}, \#R45, 12 pages, (2007).

        \bibitem{KS10}
        \textsc{W.\@ Klotz, T.\@ Sander}.  
        \textit{Integral Cayley graphs over abelian groups}. 
        Electron.\@ J.\@ Combin.\@ \textbf{17:1}, \#R81, 13 pages, 2010.
        
        \bibitem{KS}
        \textsc{W.\@ Klotz, T.\@ Sander}.  
        \textit{Integral Cayley graphs defined by greatest common divisors}. 
        Electron.\@ J.\@ Combin.\@ \textbf{18:1}, \#P94, 15 pages, (2011).

        \bibitem{KL}
        \textsc{E.\@ V.\@ Konstantinova, D.\@ Lytkina}.   
        \textit{Integral  Cayley  graphs  over  finite  groups}. Algebra Colloq.\@ \textbf{27:1}, 131--136 (2020).

        \bibitem{KMS}
        \textsc{I.\@ Kovács, M.\@ Muzychuk, G.\@ Somlai}. 
        \textit{The isomorphism problem of bi-Cayley graphs over cyclic groups}. 
        Combinatorica \textbf{33:4}, 435--460 (2013).
        
        \bibitem{KraMo} 
        \textsc{R.\@ Krakovski, B.\@ Mohar}.  
        \textit{Spectrum of Cayley graphs on the symmetric group generated by transpositions}. 
        Linear Algebra Appl.\@ \textbf{437:3}, 1033--1039 (2012).
    
        \bibitem{LP} 
		\textsc{T.\@ K.\@ Lim, C.\@ Praeger}. 
		\textit{On generalised Paley graphs and their automorphism groups}, 
		Michigan Math.\@ J.\@ \textbf{58}, 294--308 (2009).
	
		\bibitem{LF}
		\textsc{H.\@ L.\@ Liu, Y.\@ Q.\@ Feng}. 
		\textit{On the isomorphism of bi-Cayley graphs}. 
		Communications in Algebra \textbf{39:7}, 2452--2467 (2011).
        
        \bibitem{LZ}
		\textsc{X.\@ Liu, S.\@ Zhou}.
		\textit{Eigenvalues of Cayley graphs}. 
		Electron.\@ J.\@ Comb.\@ \textbf{29:2}, Research Paper P2.9, 164 p.\@ (2022).
		
		\bibitem{Ma}
		\textsc{S.\@ L.\@ Ma}. 
		\textit{On circulant strongly regular graphs}, 
		European Journal of Combinatorics \textbf{15:5}, 453--457 (1994).
		
		\bibitem{Marusic} 
		\textsc{D.\@ Marušič} 
		\textit{On vertex-transitive graphs of order $p^n$}. 
		Discrete Math.\@ \textbf{67:3}, 313--318 (1987).
		
		\bibitem{PV} 
		\textsc{R.\@ A.\@ Podestá, D.\@ E.\@ Videla}.
		\textit{Integral equienergetic non-isospectral unitary Cayley graphs}. 
		Linear Algebra and its Applications \textbf{612}, 42--74 (2021).
		
		\bibitem{PV2} 
		\textsc{R.\@ A.\@ Podestá, D.\@ E.\@ Videla}.
		\textit{Spectral properties of generalized Paley graphs}, Australasian Journal of Combinatorics  \textbf{91:3}, 326--365 (2025). 

		\bibitem{PV3} 
		\textsc{R.\@ A.\@ Podestá, D.\@ E.\@ Videla}.
		\textit{The nature of the spectrum of generalized Paley graphs and weak Waring numbers over finite fields}, 
		arXiv, april 2026. 

        \bibitem{Re} 
        \textsc{P.\@  Renteln}. 
        \textit{On the spectrum of the derangement graph}. 
        Electron.\@ J.\@  Combin.\@ \textbf{14:1}, \#R82, 17 pages (2007).

		\bibitem{RJ}
		\textsc{M.\@ J.\@ Resmini, D.\@ Jungnickel}. 
		\textit{On semi-Cayley graphs}. 
		Journal of Algebraic Combinatorics \textbf{2:4}, 395--405 (1993).

        \bibitem{Sa} 
        \textsc{T.\@ Sander}. 
        \textit{Sudoku graphs are integral}. 
        Electron.\@ J.\@ Combin.\@ \textbf{16:1}, \#N25, 7 pages, 2009.
 
        \bibitem{So} 
        \textsc{W.\@ So}. 
        \textit{Integral circulant graphs}. 
        Discrete Math.\@ \textbf{306:1}, 153--158 (2005).
               
		\bibitem{Stevanovic} 
		\textsc{D.\@ Stevanovic}.
		\textit{When is NEPS of graphs connected?} 
		Linear Algebra Appl.\@ \textbf{301:1--3}, 137--144 (1999).
		
\end{thebibliography}
\end{document}